\newif\ifcolorthree
\newcommand{\chgthree }[1]{\textcolor{blue}		{#1}}   
\newcommand{\chmthree }[1]{\mathcolor{blue}		{#1}}   
\newcommand{\chgthree }[1]{#1} 
\newcommand{\chmthree }[1]{#1} 
\newif\ifcolortwo
\newcommand{\chgtwo }[1]{\textcolor{blue}		{#1}}   
\newcommand{\chmtwo }[1]{\mathcolor{blue}		{#1}}   
\newcommand{\chgtwo }[1]{#1} 
\newcommand{\chmtwo }[1]{#1} 
\newif\ifcolorone
\newcommand{\chg }[1]{\textcolor{blue}		{#1}}   
\newcommand{\chg }[1]{#1} 
\title{On Erlang \chgthree{ODE} approximations \chgthree{of} differential equations\\with distributed time delays%
	\thanks{Submitted to the editors \chgthree{07/19}-2026.}}
\author{Tobias K. S. Ritschel%
	\thanks{Department of Applied Mathematics and Computer Science, Technical University of Denmark, DK-2800 Kgs. Lyngby, Denmark (\email{tobk@dtu.dk}).}
}
\setlist[enumerate]{leftmargin=.5in}
\setlist[itemize]{leftmargin=.5in}
\Crefname{ALC@unique}{Line}{Lines}
\newcommand{\incr}{\,\mathrm{d}}
\newcommand{\diff}[2]{\frac{\mathrm{d}{#1}}{\mathrm{d}{#2}}}
\newcommand{\pdiff}[3][]{\frac{\partial^{#1}{#2}}{\partial^{#1}{#3}}}
\DeclareMathOperator{\erf}{erf}
\newcommand{\R}  {\mathbb{R}}
\newcommand{\Rp} {\R_{> 0}}
\newcommand{\Rnn}{\R_{\geq 0}}
\newcommand{\Rnp}{\R_{\leq 0}}
\newcommand{\N}  {\mathbb{N}}
\newcommand{\Nnn}{\mathbb{N}_{\geq 0}}
\newcommand{\C}  {\mathbb C}
\DeclareMathOperator{\blkdiag}{blkdiag}
\DeclareMathOperator{\real}{Re}
\newcommand{\matlab}	{\textsc{Matlab}}
\newcommand{\odeff}		{\texttt{ode45}}
\newcommand{\odeofs}	{\texttt{ode15s}}
\newcommand{\fsolve}	{\texttt{fsolve}}
\newcommand{\integral}	{\texttt{integral}}
\crefname{hypothesis}{Hypothesis}{Hypotheses}
\begin{document}
	\maketitle

	\begin{abstract}
	In this paper, we propose a general approach for approximate simulation and analysis of delay differential equations (DDEs) with distributed time delays based on methods for ordinary differential equations (ODEs). The key innovation is that we 1)~\chgtwo{propose an Erlang mixture approximation of the kernel in the DDEs} and 2)~use the linear chain trick to transform the \chgtwo{resulting} approximate DDEs to ODEs. \chgthree{We refer to this as \emph{the Erlang ODE approximation} of the DDEs, and} we prove that \chgtwo{the} \chgthree{Erlang mixture} approximation converges for continuous and bounded kernels \chgtwo{if the number of terms increases sufficiently fast}.
	\chg{\chgthree{Furthermore, w}e show that if the kernel is also exponentially bounded\chgthree{,} the \chgthree{Erlang ODE} approximat\chgthree{ion} can be used to assess the stability of the steady states of the original DDEs and that the solution to the ODE \chgthree{approximation} converges.}
	\chg{Additionally,} we propose an approach based on bisection and least-squares estimation for determining optimal parameter values in the approximation. Finally, we present numerical examples that demonstrate the accuracy and convergence rate\chgthree{s of the approximations} and the efficacy of the proposed approach for bifurcation analysis and Monte Carlo simulation. The numerical examples involve a modified logistic equation\chg{, chemotherapy-induced myelosuppression,} and a point reactor kinetics model of a molten salt nuclear fission reactor.
\end{abstract}

	\begin{keywords}
	Mixture approximations,
	distributed time delays,
	delay differential equations,
	linear chain trick.
\end{keywords}

	\begin{MSCcodes}
	37M05, 39B99, 41A30, 65D15, 65P99, 65Q20
\end{MSCcodes}


	\section{Introduction}\label{sec:intro}
Many industrial and natural processes exhibit time delays~\cite{Kolmanovskii:Myshkis:1992}, e.g., due to advection (flow in a pipe or river), diffusion (mixing), or feedback mechanisms (natural or man-made), and as time delays can have a significant impact on the dynamics and stability of a process~\cite{Niculescu:Gu:2004}, it is important to account for them when developing and analyzing mathematical models. Such models typically involve differential equations, and the time delays can either be absolute (discrete) or distributed (continuous)~\cite{Smith:2011}. In the former case, the right-hand side function depends on the state at specific points in the past, and in the latter, it depends on a weighted integral of all past states, i.e., a convolution. The weight function is called the kernel or memory function. Although discrete time delays are more commonly used, they typically arise from a simplification of the underlying phenomena. For instance, assuming plug-flow in a pipe leads to an absolute time delay whereas a nonuniform flow velocity (e.g., Hagen-Poiseuille flow) leads to a distributed delay~\cite{Ritschel:2025}. In this work, we focus on the latter.

In recent decades, there has been a significant interest in distributed time delays, and they have been used to model a variety of processes, e.g., in pharmacokinetics and pharmacodynamics (PK/PD)~\cite{Hu:etal:2018}, neuroscience~\cite{Darabsah:etal:2024}, side-effects of chemotherapy~\cite{Krzyzanski:etal:2018}, the Mackey-Glass system~\cite{Nevermann:Gros:2023, Zhang:Xiao:2016}, which can describe respiratory and hematopoietic diseases, population models in biology~\cite{Cassidy:etal:2019, Diekmann:etal:2020}, and pollution in fisheries~\cite{Bergland:etal:2022}. Mechanical~\cite{Aleksandrov:etal:2023} and economic~\cite{Guerrini:etal:2020} processes have also been considered, and we refer to the book by Kolmanovskii and Myshkis~\cite{Kolmanovskii:Myshkis:1992} for more examples.
Furthermore, many theoretical results have been developed specifically for DDEs with distributed time delays. Rahman et al.~\cite{Rahman:etal:2015} studied the stability of networked systems with distributed delays, Yuan and Belair~\cite{Yuan:Belair:2011} present general stability and bifurcation results, Bazighifan et al.~\cite{Bazighifan:etal:2019} analyze oscillations in higher-order DDEs with distributed delays, Cassidy~\cite{Cassidy:2021} demonstrate the equivalence between cyclic ordinary differential equations (ODEs) and a scalar DDE with a distributed delay, and Aleksandrov et al.~\cite{Aleksandrov:etal:2024} study the stability of systems with state-dependent kernels.

Despite the many applications, there exists far less theory and fewer numerical methods and software for DDEs with distributed time delays than for ODEs. Even for DDEs with absolute delays, there exist many numerical methods~\cite{Bellen:2000, Polyanin:etal:2023} and significant amounts of off-the-shelf software, e.g., for simulation~\cite{Shampine:Thompson:2001}. However, some modeling software does contain functionality for distributed time delays, e.g., NONMEM~\cite{Yan:etal:2021} and Phoenix~\cite{Hu:etal:2018}, and customized numerical methods have also been proposed. Typically, they 1)~approximate the integral in the convolution using a quadrature rule, e.g., a trapezoidal~\cite{Huang:Vandewalle:2004} or Gaussian~\cite{Torkamani:etal:2013} rule, and 2)~discretize the differential equations using a one-step method, e.g., a Runge-Kutta method~\cite{Zhang:Xiao:2016, Eremin:2019}\chgthree{,} or a linear multistep method~\cite{Wang:etal:2018}. Methods based on splines have also been proposed~\cite{Zhou:2016}. However, compared to similar methods for ODEs, approximating the convolution is requiring in terms of computations and memory, and it is not straightforward to choose the time step size adaptively.

Consequently, it is desirable to identify approaches that can directly take advantage of existing theory, methods, and software for ODEs, and for a specific class of kernels, it is possible to transform DDEs with distributed delays to ODEs. Specifically, the transformation is called the \emph{linear chain trick} (LCT)~\cite{MacDonald:1978, Ponosov:etal:2004, Smith:2011}, and it is applicable when the kernel is given by the probability density function of an Erlang distribution (referred to as an Erlang kernel).
\chg{See also~\cite{Diekmann:etal:2018} on sufficient and necessary conditions under which DDEs can be transformed to ODEs.}
The LCT has been widely used, and Hurtado and Kirosingh~\cite{Hurtado:Kirosingh:2019} and Hurtado and Richards~\cite{Hurtado:Richards:2020, Hurtado:Richards:2021} generalized it to transform a wider range of stochastic mean field models to ODEs. Additionally, both Cassidy et al.~\cite{Cassidy:etal:2022} and Krzyzanski~\cite{Krzyzanski:2019} have proposed to simplify the simulation and analysis of specific DDEs by approximating the involved gamma kernels by a hypoexponential kernel and a truncated binomial series, respectively\chg{\chgthree{. Furthermore,} Guglielmi and Hairer~\cite{Guglielmi:Hairer:2025} consider exponential mixture approximations \chgthree{of kernels involving factors of $t^{-\gamma}$ for positive $\gamma$,} and \chgthree{they} demonstrate their approach for DDEs involving both gamma and Pareto kernels}.

\chgthree{Although they consider specific exponential mixture approximations, Guglielmi and Hairer also show that if the integral of the absolute error of an Erlang mixture kernel approximation converges to zero, then the solution to the ODEs obtained with the LCT converges pointwise to that of the original DDEs~\cite[Thm.~1]{Guglielmi:Hairer:2025}. Furthermore, as mentioned by Hurtado and Kirosingh~\cite{Hurtado:Kirosingh:2019} when they generalized the LCT, it is well-known that phase-type distributions (which includes exponential and Erlang mixtures) can approximate the probability distribution of any non-negative random variable (see, e.g., \cite[Chap.~III, Thm.~4.2]{Asmussen:2003} and~\cite[Thm.~3.2.9]{Bladt:Nielsen:2017}). However, it is only the cumulative distribution function and the moments of general phase-type distributions that converge, i.e., Guglielmi and Hairer's result cannot be applied directly to general phase-type distribution approximations of the memory kernel.}

Therefore, in this work, we \chgthree{propose the \emph{Erlang ODE approximation}} of DDEs with distributed time delays \chgthree{involving continuous and bounded kernels. It is based on} a\chgthree{n} Erlang mixture approximation \chgthree{of the involved kernel, and w}e prove that \chgthree{the integral of the absolute error} converges \chgthree{to zero, such that we can use Guglielmi and Hairer's result to show that the solution to the ODE approximation converges to that of the DDEs. We also show that} the \chgthree{ODE} approximat\chgthree{ion} can be used to assess the stability of the steady states of the DDEs.
\chg{In recent work, th\chgthree{e Erlang ODE} approximation was used to develop an algorithm for identifying kernels in DDEs based on partial measurements of the state~\cite{Ritschel:Wyller:2025}.}
\chgthree{W}e \chgthree{also} propose a least-squares approach for determining the Erlang mixture approximation coefficients\chgthree{, and w}e use numerical example\chg{s} to demonstrate that such an optimal approximation is more accurate than an approximation which uses \chgthree{the} theoretical coefficients from the \chgthree{convergence} proof\chg{\chgthree{. W}e compare with the accuracy of one of the exponential mixture approximations described by Guglielmi and Hairer~\cite{Guglielmi:Hairer:2025} for the model in~\cite{Krzyzanski:2019}}\chgthree{, and we use} two numerical examples \chgthree{to} demonstrate that the proposed approach can be used to approximately simulate and analyze the solution of DDEs with distributed time delays.

The remainder of the paper is structured as follows. In Section~\ref{sec:notation} and~\ref{sec:system}, we present the notation used and the system considered in this paper, respectively. Next, \chgtwo{we present the main theoretical results of the paper} in Section~\ref{sec:approx}\chgtwo{, and i}n Section~\ref{sec:algo}, we describe the least-squares approach\chgtwo{. I}n Section~\ref{sec:ex:conv} and~\ref{sec:ex}, we present numerical examples\chgtwo{, and} conclusions are presented in Section~\ref{sec:conclusions}.
	\section{Preliminaries}\label{sec:notation}%
The spaces $\Rnn = \{x \in \R \vert x \geq 0\}$ and $\Nnn = \{0, 1, \ldots\}$ contain the non-negative real numbers and integers, respectively, and $\Rp = \{x \in \R \vert x > 0\}$ \chg{and $\Rnp= \{x \in \R| x \leq 0\}$ are} the space\chg{s} of positive \chg{and non-\chgtwo{positive}} real numbers.
Furthermore, $\det A$ is the determinant of the matrix argument, $A$, and \chgtwo{\chgthree{w}e will use the $\infty$-norm defined by
\begin{align}
	\|a\|_\infty &= \max \,\{|a_i|\}_{i=1}^n, & \|A\|_\infty &= \max \, \left\{\sum_{j=1}^m |A_{ij}|\right\}_{i=1}^n
\end{align}
for a vector $a \in \R^n$ and a matrix $A \in \R^{n \times m}$.}
We will make repeated use of the identity
\begin{align}\label{eq:exponential}
	e^{at} &= \sum_{m=0}^\infty \frac{(at)^m}{m!}
\end{align}
in the proofs in Section~\ref{sec:approx}, and we denote by $[\,\cdot\,]^+ = \max\{0, \cdot\}$ the positive part of the argument.

\chgtwo{The main theoretical results of the paper are related to convergence~\cite[Chap.~7]{Rudin:1976}. A function $f_n: \Rnn \rightarrow \R$ converges \emph{pointwise} to $f: \Rnn \rightarrow \R$ (i.e., $f_n(x) \rightarrow f(x)$) as $n \rightarrow \infty$ if there exists an $N: \Rnn \rightarrow \N$ for every $\epsilon \in \Rp$ and $x \in \Rnn$ such that $|f_n(x) - f(x)| < \epsilon$ for all $n > N(x)$. If $N$ can be chosen independently of $x$, the convergence is \emph{uniform}. We will also show results on \emph{weak} convergence, i.e., that the integral converges: $\int_0^x f_n(y) \incr y \rightarrow \int_0^x f(y) \incr y$ as $n \rightarrow \infty$.  Finally, we will show convergence of the \chgthree{integral of the} absolute difference (convergence in $L^1$-norm): $\int_0^\infty |f_n(x) - f(x)| \incr x \rightarrow 0$ as $n \rightarrow \infty$. We will also use the result that if $f_n(x) \rightarrow g_n(x)$ and $g_n(x) \rightarrow f(x)$ as $n \rightarrow \infty$ where $g_n: \Rnn \rightarrow \R$, then $f_n(x) \rightarrow f(x)$ as well because the error can be bounded through a triangle inequality: $|f_n(x) - f(x)| = |f_n(x) - g_n(x) + g_n(x) - f(x)| \leq |f_n(x) - g_n(x)| + |g_n(x) - f(x)|$. Both terms can be made arbitrarily small. In particular, we will use this result together with Stirling's approximation~\cite{Romik:2000}: $\ln n! \approx n \ln n - n + \frac{1}{2} \ln 2 \pi n$. The error of this approximation decreases with $1/n$, i.e., the approximation converges as $n \rightarrow \infty$.
The concepts are similar if $n$ is a real number instead of a natural number and if the domain and range of $f_n$, $g_n$, and $f$ are different.}
	\section{System}\label{sec:system}%
We consider systems in the form
\begin{subequations}\label{eq:system}
	\begin{align}
		\label{eq:system:x0}
		x(t) &= x_0(t), & t &\in (-\infty, t_0], \\
		\label{eq:system:x}
		\dot x(t) &= f(x(t), z(t)), & t &\in [t_0, t_f],
	\end{align}
\end{subequations}
where $t \in \R$ is time, $t_0, t_f \in \R$ are the initial and final time, $x: \R \rightarrow \R^{n_x}$ is the state, and $x_0: \R \rightarrow \R^{n_x}$ is the initial state function. Furthermore, $f: \R^{n_x} \times \R^{n_z} \rightarrow \R^{n_x}$ is the right-hand side function, and the memory state, $z: \R \rightarrow \R^{n_z}$, is given by the convolution
\begin{subequations}\label{eq:system:delay}
	\begin{align}
		\label{eq:system:z}
		z(t) &= \int\limits_{-\infty}^t \alpha(t - s) r(s) \incr s, \\
		\label{eq:system:r}
		r(t) &= h(x(t)),
	\end{align}
\end{subequations}
where $r: \R \rightarrow \R^{n_{\chmthree{r}}}$ is the delayed variable, and each element of $\alpha: \Rnn \rightarrow \R^{n_z \chmthree{\times n_r}}$ is a\chg{n exponentially bounded} \emph{regular} kernel (see Definition~\ref{def:regular:kernel}--\chg{\ref{def:exponentially:bounded}}). Furthermore, $h: \R^{n_x} \rightarrow \R^{n_{\chmthree{r}}}$ is the memory function. We assume that $f$ and $h$ are differentiable in their arguments \chg{and that $r$ is bounded, i.e., that $\chmtwo{\|} r(t) \chmtwo{\|_\infty} \leq K_r$ for all $t \in (-\infty, t_f]$. Additionally, we assume that \chgtwo{$x_0$ is continuous and exponentially bounded, i.e., $\|x_0(t)\|_\infty \leq L_0 e^{-\sigma t}$, and that the rate parameter, $\sigma \in \Rp$, is strictly smaller than all of the rate parameters in the exponential bounds for the kernels.} W}e refer to the paper by Ponosov et al.~\cite[Thm.~1]{Ponosov:etal:2004} for more details on the existence and uniqueness of solutions to the initial value problem (IVP)~\eqref{eq:system}--\eqref{eq:system:delay}. See also the book by Hale and \chgthree{Verduyn} Lunel~\cite{Hale:Lunel:1993}.
\chgtwo{
	\begin{definition}\label{def:integrals}
		For a scalar-valued kernel, $\alpha: \Rnn \rightarrow \R$, the integrals $\beta: \Rnn \rightarrow \R$ and $\rho: \Rnn \rightarrow \Rnn$ are defined by
		\begin{align}\label{eq:integrals}
			\beta(t) &= \int_0^t  \alpha(s)  \incr s, &
			\rho (t) &= \int_0^t |\alpha(s)| \incr s.
		\end{align}
	\end{definition}
}
\begin{definition}\label{def:regular:kernel}
	A scalar-valued kernel, $\alpha: \Rnn \rightarrow \R$, is \emph{regular} if it satisfies the following \chgtwo{conditions}.
	\begin{enumerate}[label=\chgtwo{(\alph*)}]
		\item \label{def:regular:kernel:bounded}
		It is bounded, i.e., $\chmtwo{|}\alpha(t)\chmtwo{|} \leq K$ for all $t \in \Rnn$ and for some finite $K \in \Rp$.
		\item \label{def:regular:kernel:continuous}
		It is continuous, i.e., for all $\epsilon \in \Rp$ and $t \in \Rnn$, there exists a $\delta \in \Rp$ such that $|\alpha(s) - \alpha(t)| < \epsilon$ for all $s \in \Rnn$ satisfying $|s - t| < \delta$.
		\item \label{def:regular:kernel:integral}
		\chgtwo{The integral of the absolute value of $\alpha$ is finite:}
		\begin{align}\label{eq:finite:absolute:integral}
			\rho(\infty) &= \int_0^\infty |\alpha(s)| \incr s < \infty.
		\end{align}
	\end{enumerate}
\end{definition}
\begin{definition}\label{def:exponentially:bounded}
\chg{
A scalar-valued kernel, $\alpha: \Rnn \rightarrow  \R$, is exponentially bounded if there exist $L, \rho \in \Rp$ such that
\begin{align}
	\chmtwo{|}\alpha(t)\chmtwo{|} &\leq L e^{-\rho t}
\end{align}
for all $t \in \Rnn$.
}
\end{definition}
Next, we present a few corollaries \chgtwo{from the literature} about the steady states of~\eqref{eq:system:x}--\eqref{eq:system:delay} and their stability.
\begin{corollary}\label{thm:steady:state}
	A state $\bar x \in \R^{n_x}$ is a steady state of the system~\eqref{eq:system:x}--\eqref{eq:system:delay} if
	\begin{align}\label{eq:steady:state}
		0 &= f(\bar x, \bar z), &
		\bar z &= \chmtwo{\beta(\infty)} \bar r\chmtwo{,} &
		\chmthree{\bar r} &= h(\bar x)\chmthree{,}
	\end{align}
	\chgtwo{where the $i\chmthree{,j}$'th element of $\beta: \Rnn \rightarrow \R^{n_z \chmthree{\times n_r}}$ is given by~\eqref{eq:integrals} in Definition~\ref{def:integrals}.}
\end{corollary}

\begin{proof}
	In steady state, $x(t) = \bar x$ for all $t$. Consequently, $r(t) = \bar r = h(\bar x)$ and
	\begin{align}
		z(t)
		&= \int_{-\infty}^t \alpha(t - s) \bar r \incr s
		 = \int_{-\infty}^t \alpha(t - s) \incr s \, \bar r
		 = \chmtwo{\beta(\infty)} \bar r,
	\end{align}
	for all $t$\chgtwo{.}
\end{proof}
\begin{corollary}
	The \chgtwo{linearization of the system}~\eqref{eq:system:x}--\eqref{eq:system:delay} \chgtwo{around a steady state, $\bar x$, satisfying~\eqref{eq:steady:state}} describes the evolution of the deviation variable $X: \R \rightarrow \R^{n_x}$:
	\begin{align}\label{eq:linearized:system}
		\dot X(t) &= F X(t) + G \int_{-\infty}^t \alpha(t - s) H X(s) \incr s, &
		X(t) &= x(t) - \bar x.
	\end{align}
	The matrices $F \in \R^{n_x \times n_x}$, $G \in \R^{n_x \times n_z}$, and $H \in \R^{n_{\chmthree{r}} \times n_x}$ are the Jacobians of the right-hand side function and the delay function evaluated in the steady state:
	\begin{align}\label{eq:jacobians}
		F &= \pdiff{f}{x}(\bar x, \bar z), &
		G &= \pdiff{f}{z}(\bar x, \bar z), &
		H &= \pdiff{h}{x}(\bar x).
	\end{align}
\end{corollary}
\begin{corollary}\label{thm:stability}
	The system~\eqref{eq:system:x}--\eqref{eq:system:delay} is locally asymptotically stable around a steady state, $\bar x$, satisfying~\eqref{eq:steady:state} if $\real \lambda < 0$ for all $\lambda \in \C$ that satisfy the characteristic equation
	\begin{align}\label{eq:characteristic:equation}
		\chg{P(\lambda) = } \det\left(F - \lambda I + G \int_0^\infty e^{-\lambda s} \alpha(s) \incr s H\right) = 0,
	\end{align}
	where \chg{$P: \C \rightarrow \C$ is the characteristic function and} $I \in \R^{n_x \times n_x}$ is an identity matrix. \chgthree{Furthermore, $\lambda$ is an isolated root if $\real \lambda > -\sigma$, where $\sigma \in \Rp$ is the rate parameter in the exponential bound on the initial state, $x_0$, in~\eqref{eq:system:x0}.}
\end{corollary}

\begin{proof}
	\chgtwo{Theorem~4.7 in the paper by Diekmann and Gyllenberg~\cite{Diekmann:Gyllenberg:2012} provides conditions under which this result holds, and the assumptions on the system~\eqref{eq:system:x}--\eqref{eq:system:delay} ensure that these conditions are satisfied (see Section~\ref{sec:linearized:stability} in the supplementary material for further details). \chgthree{The condition for the roots to be isolated is presented in the same paper.}}
\end{proof}
	\section{\chgtwo{Main theoretical results}}\label{sec:approx}%
\chgtwo{In this section, we present and prove the main theoretical results of the paper: 1)~We can approximate regular kernels arbitrarily well using Erlang mixture approximations (Proposition~\ref{thm:erlang:mixture:approximation}), 2)~we can approximate the DDEs~\eqref{eq:system:x}--\eqref{eq:system:delay} by ODEs based on Erlang mixture approximations\chgthree{, i.e., the Erlang ODE approximation, and the solution converges to that of the DDEs (Theorem~\ref{thm:ivp:approximation}), and 3)~the} steady state of the ODEs and the roots of the characteristic polynomial \chgthree{converge to those for the DDEs} (Theorem~\ref{thm:ode:approximation}).} For simplicity of the presentation, we only consider scalar kernels in \chgtwo{Proposition~\ref{thm:erlang:mixture:approximation} on the convergence of Erlang mixture approximations}.

\chgtwo{First, we define Erlang kernels which constitute the basis functions in the Erlang mixture kernel approximation that we describe afterwards.} Figure~\ref{fig:erlang:kernels} shows examples of Erlang kernels and Erlang mixture kernels for different rate parameters and coefficients.
\begin{definition}\label{def:erlang:kernel}
	The $m$'th order Erlang kernel,
	$\ell_m: \Rnn \rightarrow \Rnn$, with rate parameter $a \in \Rp$ is given by~\cite{Ibe:2014}
	\begin{align}\label{eq:erlang:pdf}
		\ell_m(t) &= b_m t^m e^{-a t}, &
		b_m &= \frac{a^{m+1}}{m!},
	\end{align}
	where $b_m \in \chmthree{\Rp}$ is a normalization constant, and $m \in \Nnn$.
\end{definition}
The normalization constant, $b_m$, is defined such that the integral of the Erlang kernel is one:
\begin{align}\label{eq:erlang:kernel:integral}
	\int_0^\infty \ell_m(t) \incr t &= 1.
\end{align}
\begin{definition}[\chgthree{Erlang Mixture approximation}]\label{def:erlang:mixture:approximation}
	\chgthree{The $M$'th order Erlang mixture approximation, $\hat \alpha: \Rnn \rightarrow \R$, of the kernel $\alpha: \Rnn \rightarrow \R$ is given by}
	\begin{align}\label{eq:erlang:mixture:approximation}
		\hat \alpha(t) &= \sum_{m=0}^M c_m \ell_m(t), & c_m &= \int_{s_m}^{s_{m+1}} \alpha(s) \incr s, & s_m &= m \Delta s, & \Delta s &= 1/a,
	\end{align}
	\chgthree{where $\ell_m: \Rnn \rightarrow \Rnn$ is the $m$'th order Erlang kernel with rate parameter $a \in \Rp$. The coefficients,} $c_m \in \R$ \chgthree{for $m = 0, \ldots, M$, are} given by the integral of $\alpha$ over a set of intervals with boundaries $s_m \in \Rnn$ for $m = 0, \ldots, M+1$ where the width $\Delta s \in \Rp$ is equal to the inverse of the rate parameter, $a$.
\end{definition}
\begin{proposition}[\chgthree{Convergence of the} \chgtwo{Erlang mixture approximation}]\label{thm:erlang:mixture:approximation}%
	\chgtwo{Let $\alpha: \Rnn \rightarrow \R$ be a scalar-valued regular kernel, and let $\hat \alpha: \Rnn \rightarrow \R$ be \chgthree{the corresponding} Erlang mixture \chgthree{approximation} of order $M \in \Nnn$ \chgthree{with} rate parameter $a \in \Rp$\chgthree{.} Then,
	\begin{enumerate}[label=(\alph*)]
		\item $\hat \alpha$ is regular,
		\label{thm:erlang:mixture:approximation:regularity}
	\end{enumerate}
	and the following asymptotic properties hold as $a \rightarrow \infty$ provided that $M/a \rightarrow \infty$ as well.
	\begin{enumerate}[resume, label=(\alph*)]
		\item \label{thm:erlang:mixture:approximation:pointwise:convergence}
		$\hat \alpha$ converges \chgthree{pointwise} to $\alpha$ for all $t \in \Rnn$, i.e.,
		\begin{align}
			\hat \alpha(t) &\rightarrow \alpha(t).
		\end{align}
		\item \label{thm:erlang:mixture:approximation:integral:convergence}
		$\hat \alpha$ converges weakly to $\alpha$ for all $t \in \Rnn$, i.e.,
		\begin{align}
			\int_0^t \hat \alpha(s) \incr s &\rightarrow \int_0^t \alpha(s) \incr s.
		\end{align}
		\item \label{thm:erlang:mixture:approximation:absolute:integral:convergence}
		The integral of the absolute difference between $\hat \alpha$ and $\alpha$ converges to zero:
		\begin{align}
			\int_0^\infty |\hat \alpha(t) - \alpha(t)| \incr t &\rightarrow 0.
		\end{align}
	\end{enumerate}
	}%
\end{proposition}
\begin{corollary}\label{thm:erlang:mixture:approximation:exponential:boundedness}
	\chgtwo{The Erlang mixture \chgthree{approximation}, $\hat \alpha: \Rnn \rightarrow \R$, is exponentially bounded if $\alpha: \Rnn \rightarrow \R$ is exponentially bounded. Specifically, if $|\alpha(t)| \leq L e^{-\rho t}$ for all $t \in \Rnn$ where $L, \rho \in \Rp$, then $|\hat \alpha(t)| \leq L e^{-\bar \rho t}$ where $\bar \rho = \rho (1 - e^{-\omega})/\omega \in \Rp$ \chgthree{and $\omega \in \Rp$ satisfies $\omega \geq \rho/a$}. Furthermore, $0 \leq \bar \rho < \rho$\chgthree{,} and \chgthree{as $a \rightarrow \infty$, $\omega$ can go to zero, in which case} $\bar \rho \rightarrow \rho$.}
\end{corollary}
\begin{figure}[t]
	\centering
	\subfloat{\includegraphics[width=\textwidth, trim=0pt 0pt 0pt 0pt, clip]{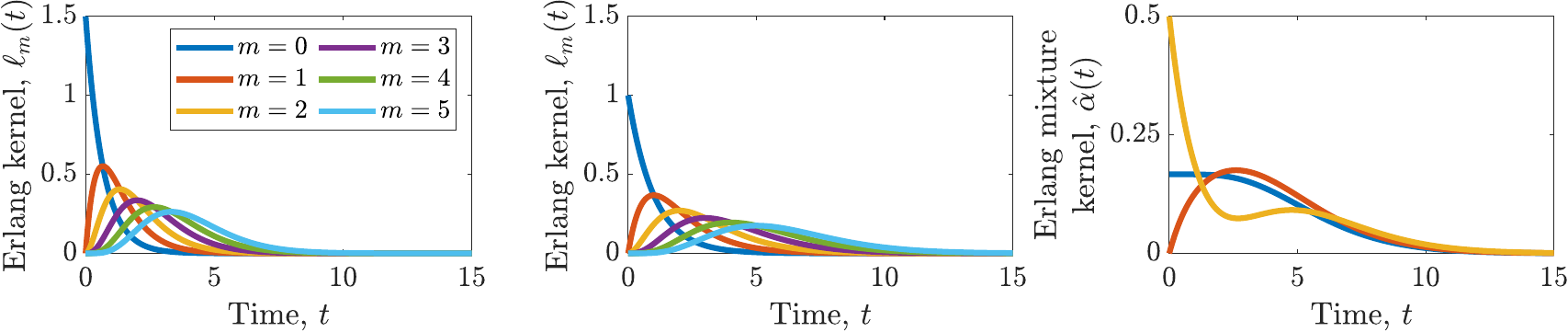}}
	\caption{Erlang kernels for $a = 1.5$ (left) and $a = 1$ (middle) and $m = 0, \ldots, 5$, and an Erlang mixture kernel (right) for $a = 1$, $M = 5$, and the following values of the coefficients. Blue: $c_m = 1/(M+1)$ \chg{for $m = 0, \ldots, M$}. Red: $c_0 = 0$ and $c_m = 1/M$ for $m = 1, \ldots, M$. Yellow: $c_0 = c_M = 1/2$ and $c_m = 0$ for $m = 1, \ldots, M-1$.}
	\label{fig:erlang:kernels}
\end{figure}

\begin{definition}[\chgthree{Erlang ODE approximation}]
	\chgthree{The Erlang ODE approximation of} the DDEs \eqref{eq:system:x}--\eqref{eq:system:delay}
	\chgthree{is parameterized by the rate parameters $a_{ij} \in \Rp$ and orders $M_{ij} \in \Nnn$ in Erlang mixture approximations of the kernel elements $\alpha_{ij}: \Rnn \rightarrow \R$ for $i = 1, \ldots, n_z$ and $j = 1, \ldots, n_r$. The ODE approximation is given by}
	\begin{subequations}\label{eq:lct:approximate:system:ODE}
		\begin{align}
			\label{eq:system:approximate:x}
			\dot{\hat x}(t) &= f(\hat x(t), \hat z(t)), \\
			\label{eq:system:approximate:Z}
			\dot{\hat Z}(t) &= A \hat Z(t) + B \hat r(t), \\
			\label{eq:system:approximate:z}
			\hat z(t) &= C \hat Z(t), \\
			\hat r(t) &= h(\hat x(t)),
		\end{align}
	\end{subequations}
	where $\hat x: \R \rightarrow \R^{n_x}$, $\hat z: \R \rightarrow \R^{n_z}$\chgthree{, $\hat r:\R \rightarrow \R^{n_r}$}, and $\hat Z: \R \rightarrow \R^{M + n_z \chmthree{n_r}}$. Here, $M = \sum_{i=1}^{n_z} \chmthree{\sum_{j=1}^{n_r}} M_{i\chmthree{j}}$, and the elements of $\hat Z$ are auxiliary memory states $\chmthree{\hat Z}_{mi\chmthree{j}}: \R \rightarrow \R$ for $m = 0, \ldots, M_{i\chmthree{j}}$\chgthree{,} $i = 1, \ldots, n_z$\chgthree{, and $j = 1, \ldots, n_r$}.
	\chgthree{T}he initial conditions are
	\begin{subequations}\label{eq:lct:approximate:system:ODE:x0}
		\begin{align}
			\hat x(t_{\chmthree{0}}) &= x_0(t_{\chmthree{0}}), \\
			\chmthree{\hat Z}_{mi\chmthree{j}}(t_0) &= \int_{-\infty}^{t_0} \ell_{mi\chmthree{j}}(t_0 - s) r_{\chmthree{j}}(s) \incr s, & m &= 0, \ldots, M_{i\chmthree{j}}, & i &= 1, \ldots, n_z, & \chmthree{j &= 1, \ldots, n_r,}
		\end{align}
	\end{subequations}
	where $x_0$ is the initial state function in~\eqref{eq:system:x0}\chgthree{, $r(t) = h(x_0(t))$ for $t \in (-\infty, t_0]$, and $\ell_{mij}: \Rnn \rightarrow \Rnn$ is an $m$'th order Erlang kernel with rate parameter $a_{ij}$}.
	\chgtwo{T}he matrices $A\in \R^{M + n_z \chmthree{n_r} \times M + n_z \chmthree{n_r}}$, $B \in \Rnn^{M + n_z \chmthree{n_r} \times n_{\chmthree{r}}}$, \chg{and} $C \in \chmtwo{\R}^{n_z \times M + n_z \chmthree{n_r}}$ \chgthree{have block structure}:
	\begin{subequations}
		\begin{align}
			A &= \chmthree{\blkdiag\left(A^{(1)}, \ldots, A^{(n_z)}\right), & A^{(i)} &= \blkdiag\left(A^{(i, 1)}, \ldots, A^{(i, n_r)}\right)}, \\
			B &=
			\chmthree{
				\begin{bmatrix}
					B^{(1)} \\ B^{(2)} \\ \vdots \\ B^{(n_z)}
				\end{bmatrix}, &
				B^{(i)}} &= \blkdiag\left(b^{(\chmthree{i,}1)}, b^{(\chmthree{i,}2)}, \ldots, b^{(\chmthree{i,}n_{\chmthree{r}})}\right), \\
			C &= \chmthree{\blkdiag\left(C^{(1)}, C^{(2)}, \ldots, C^{(n_z)}\right), &
				C^{(i)} &=
				\begin{bmatrix}
					c^{(i, 1)} & c^{(i, 2)} & \cdots & c^{(i, n_r)}
				\end{bmatrix},}
		\end{align}
	\end{subequations}
	\chgthree{for $i = 1, \ldots, n_z$. Each of the $n_z n_r$ blocks is associated with the Erlang mixture approximation of a different element of $\alpha$, and the auxiliary memory states are sorted row-wise in $\hat Z$ (i.e., first according to $m$, then $j$, and finally $i$). See also Appendix~\ref{sec:deriv}.}
	The block elements, $A^{(i\chmthree{j})} \in \R^{M_{i\chmthree{j}}+1 \times M_{i\chmthree{j}}+1}$, $b^{(i\chmthree{j})} \in \Rnn^{M_{i\chmthree{j}}+1 \times 1}$, and $c^{(i\chmthree{j})} \in \chmtwo{\R}^{1 \times M_{i\chmthree{j}}+1}$, are
	\begin{align}
		A^{(i\chmthree{j})} &= a_{i\chmthree{j}} (L^{(i\chmthree{j})} - I^{(i\chmthree{j})}), &
		b^{(i\chmthree{j})} &= a_{i\chmthree{j}} e_1^{(i\chmthree{j})}, &
		c^{(i\chmthree{j})} &=
		\begin{bmatrix}
			c_{0, i\chmthree{j}} & c_{1, i\chmthree{j}} & \cdots & c_{M_{i\chmthree{j}}, i\chmthree{j}}
		\end{bmatrix},
	\end{align}
	\chgthree{for $i = 1, \ldots, n_z$ and $j = 1, \ldots, n_r$} where $I^{(i\chmthree{j})}, L^{(i\chmthree{j})} \in \R^{M_{i\chmthree{j}}+1 \times M_{i\chmthree{j}}+1}$ are an identity matrix and a lower shift matrix, respectively, i.e., $L^{(i\chmthree{j})}$ contains ones in the first subdiagonal and zeros elsewhere. Furthermore, the first element of $e_1^{(i\chmthree{j})} \in \R^{M_{i\chmthree{j}}+1 \times 1}$ is one and the others are zero\chgthree{. Finally}\chgtwo{, \chgthree{$a_{ij} \in \Rp$ and} $c_{mi\chmthree{j}} \in \R$ \chgthree{are the rate parameter and} the $m$'th coefficient in the Erlang mixture approximation \chgthree{of $\alpha_{ij}$}}.
\end{definition}
\begin{corollary}
	\chgthree{There exists a unique solution to the IVP~\eqref{eq:lct:approximate:system:ODE}--\eqref{eq:lct:approximate:system:ODE:x0} in the Erlang ODE approximation of the IVP~\eqref{eq:system}--\eqref{eq:system:delay} involving DDEs subject to the assumptions described in Section~\ref{sec:system}.}
\end{corollary}
\begin{proof}
	\chgthree{The result follows directly from the differentiability, and thus Lipschitz continuity, of $f$ and $h$ and from the linearity of the right-hand side of~\eqref{eq:system:approximate:Z} in $\hat Z$ and $\hat r$~\cite[Thm.~3.10, Thm.~4.5]{Meiss:2007}.}
\end{proof}
\begin{corollary}\label{thm:initial:state:function}
	\chgtwo{If the initial state function, $x_0$, is constant, then $\chmthree{\hat Z}_{mi\chmthree{j}}(t_0) = r_{\chmthree{j}, 0}$ where $r_0 = h(x_0)$.}
\end{corollary}
\begin{proof}
	\chgtwo{The result follows from the fact that, by definition, the Erlang kernels in~\eqref{eq:erlang:pdf} integrate to one:}
	\begin{align}
		\chmtwo{\chmthree{\hat Z}_{mi\chmthree{j}}(t_0) &= \int_{-\infty}^{t_0} \ell_{mi\chmthree{j}}(t_0 - s) r_{\chmthree{j}}(s) \incr s = r_{\chmthree{j}, 0} \int_{-\infty}^{t_0} \ell_{mi\chmthree{j}}(t_0 - s) \incr s = r_{\chmthree{j}, 0}.}
	\end{align}
\end{proof}
\begin{theorem}[\chgthree{Convergence of the Erlang ODE approximation}]\label{thm:ivp:approximation}
	\chgthree{Let $x: \R \rightarrow \R^{n_x}$ be the solution to the IVP~\eqref{eq:system}--\eqref{eq:system:delay} subject to the assumptions stated in Section~\ref{sec:system}, and let $z: \R \rightarrow \R^{n_z}$ and $r: \R \rightarrow \R^{n_r}$ be the corresponding memory state and delayed quantity. Then, the solution, $\hat x: \R \rightarrow \R^{n_x}$,} to the IVP~\eqref{eq:lct:approximate:system:ODE}--\eqref{eq:lct:approximate:system:ODE:x0} \chgthree{in the corresponding Erlang ODE approximation} converge\chgthree{s} uniformly to \chgthree{$x$} for $t \in \chmthree{[t_0}, t_f]$ as $a_{i\chmthree{j}} \rightarrow \infty$ provided that $M_{i\chmthree{j}}/a_{i\chmthree{j}} \rightarrow \infty$ as well for $i = 1, \ldots, n_z$ \chgthree{and $j = 1, \ldots, n_r$:}
	\begin{align}
		\chmthree{\sup_{t\in[t_0, t_f]} \|\hat x(t) - x(t)\|_\infty \rightarrow 0.}
	\end{align}
	\chgthree{Furthermore, the memory state, $\hat z: \R \rightarrow \R^{n_z}$, and delayed quantity, $\hat r: \R \rightarrow \R^{n_r}$, corresponding to $\hat x$ also converge uniformly to $z$ and $r$:}
	\begin{align}
		\chmthree{\sup_{t\in[t_0, t_f]} \|\hat z(t) - z(t)\|_\infty &\rightarrow 0,} &
		\chmthree{\sup_{t\in[t_0, t_f]} \|\hat r(t) - r(t)\|_\infty &\rightarrow 0.}
	\end{align}
\end{theorem}
\begin{theorem}[\chgthree{Steady state and local stability}]\label{thm:ode:approximation}
	\chgthree{Let $\bar x \in \R^{n_x}$ and $x: \R \to \R^{n_x}$ be the steady state and solution to the IVP~\eqref{eq:system}--\eqref{eq:system:delay} subject to the assumptions stated in Section~\ref{sec:system}.}
	\chgtwo{Then, the following asymptotic properties hold \chgthree{for the corresponding Erlang ODE approximation} as \chgthree{the rate parameters} $a_{i\chmthree{j}} \rightarrow \infty$ provided that the ratio $M_{i\chmthree{j}}/a_{i\chmthree{j}} \rightarrow \infty$ as well for $i = 1, \ldots, n_z$ \chgthree{and $j = 1, \ldots, n_r$}.
	\begin{enumerate}[label=(\alph*)]
		\item \label{thm:ode:approximation:steady:state}
		\chgthree{Suppose that t}he steady state of the \chgthree{Erlang} ODE \chgthree{approximation}~\eqref{eq:lct:approximate:system:ODE} \chgthree{exists. Then, it} converges to $\bar x$ if the matrix $F + G \beta(\infty) H$ is invertible, where $F$, $G$, and $H$ are given by~\eqref{eq:jacobians}.
		\item \label{thm:ode:approximation:roots}
		For every isolated root, $\lambda \in \C$, of the characteristic function in~\eqref{eq:characteristic:equation}, there are roots of the characteristic \chgthree{polynomial} for the \chgthree{Erlang} ODE \chgthree{approximation}~\eqref{eq:lct:approximate:system:ODE} that converge to $\lambda$ provided that $\real \lambda \geq \lambda_{\min}$ where $-\rho_{i\chmthree{j}} < \lambda_{\min} \in \Rnp$ for $i = 1, \ldots, n_z$ \chgthree{and $j = 1, \ldots, n_r$}. The $i\chmthree{,j}$'th rate parameter, $\rho_{i\chmthree{j}} \in \Rp$, is obtained from the exponential bound on $\alpha_{i\chmthree{j}}$ (see Definition~\ref{def:exponentially:bounded}).
		\item \label{thm:ode:approximation:stability}
		\chgthree{Suppose that 1)~the system of DDEs~\eqref{eq:system:x}--\eqref{eq:system:delay} is locally asymptotically stable around $\bar x$, 2)~$x$ exists for $t \in [t_0, \infty)$, and 3)~$\sup_{t\in(-\infty, t_0]} e^{\varrho t} \|x_0(t) - \bar x\|_\infty$ is sufficiently small, where $x_0$ is the initial state function in~\eqref{eq:system:x0} and $\varrho \in \Rp$ satisfies $\sigma < \varrho < \rho_{ij}$ for $i = 1, \ldots, n_z$ and $j = 1, \ldots, n_r$. The rate parameter $\sigma$ is obtained from the exponential bound on $x_0$. Then, if the solution, $\hat x$, to the IVP~\eqref{eq:lct:approximate:system:ODE}--\eqref{eq:lct:approximate:system:ODE:x0} in the Erlang ODE approximation also exists for $t \in [t_0, \infty)$, the error $\hat x(t) - x(t) \rightarrow 0$ as $t \rightarrow \infty$.}
	\end{enumerate}
	}
\end{theorem}
\chgtwo{In the remainder of this section, we present proofs of the statements in Proposition~\ref{thm:erlang:mixture:approximation}\chgthree{, Corollary~\ref{thm:erlang:mixture:approximation:exponential:boundedness},} and Theorem\chgthree{s}~\ref{thm:ivp:approximation}--\ref{thm:ode:approximation}.}

\subsection{\chgtwo{Regularity and convergence of Erlang mixture approximations}}\label{sec:erlang:mixture:kernel:convergence}
\chgtwo{In this section, we prove that the Erlang mixture approximation in \chgthree{Definition}~\ref{def:erlang:mixture:approximation} is regular, that it converges \chgthree{pointwise} and weakly (i.e., the integral converges), and that the integral of the absolute error converges to zero as the rate parameter, $a$, goes to infinity provided that the ratio $M/a$ also goes to infinity, i.e., that the order, $M$, approaches infinity faster than $a$.}

\begin{proof}[Proof \chgtwo{of Proposition~\ref{thm:erlang:mixture:approximation}\ref{thm:erlang:mixture:approximation:regularity}}]
	The boundedness \chgtwo{in Definition~\ref{def:regular:kernel}\ref{def:regular:kernel:bounded}} follows directly from the substitution of the upper bound on $\alpha$ into the expression for the coefficients in~\chgtwo{\eqref{eq:erlang:mixture:approximation}}:
	\begin{align}
		\chmtwo{|}\hat \alpha(t)\chmtwo{|} &= \chmtwo{\left|}\sum_{m=0}^{\chmtwo{M}} \ell_m(t) \int_{s_m}^{s_{m+1}} \alpha(s) \incr s\chmtwo{\right|} \leq \sum_{m=0}^{\chmtwo{M}} \ell_m(t) \int_{s_m}^{s_{m+1}} K \incr s = K \sum_{m=0}^{\chmtwo{M}} \ell_m(t) \Delta s \\
		&= K \sum_{m=0}^{\chmtwo{M}} \frac{(at)^m}{m!} a e^{-at} \frac{1}{a} = K \sum_{m=0}^{\chmtwo{M}} \frac{(at)^m}{m!} e^{-at} \chmtwo{\leq} K e^{at} e^{-at} = K, \nonumber
	\end{align}
	where we have used the identity~\eqref{eq:exponential} \chgtwo{and that all terms in the sum are positive, i.e., $\sum_{m=0}^M (at)^m/m! \leq \sum_{m=0}^\infty (at)^m/m! = e^{at}$ for all $a \in \Rp$ and $M \in \Nnn$}.
	\chgtwo{The continuity of $\hat \alpha$ in Definition~\ref{def:regular:kernel}\ref{def:regular:kernel:continuous} follows directly from the fact that it is a weighted sum of the Erlang kernels, $\ell_m$, which are continuous because they are products of monomials and exponential functions. Finally, we show that the integral of the absolute value of $\hat \alpha$ is finite as in Definition~\ref{def:regular:kernel}\ref{def:regular:kernel:integral}:
	\begin{align}
		\int_0^\infty |\hat \alpha(s)| \incr s
		&= \int_0^\infty \left|\sum_{m=0}^M c_m \ell_m(s)\right| \incr s
		\leq \sum_{m=0}^M |c_m| \int_0^\infty \ell_m(s) \incr s
		= \sum_{m=0}^M |c_m| \\
		&= \sum_{m=0}^M \left|\int_{s_m}^{s_{m+1}} \alpha(s) \incr s\right|
		\leq \int_0^{s_{M+1}} |\alpha(s)| \incr s \leq \int_0^\infty |\alpha(s)| \incr s
		= \rho(\infty) < \infty. \nonumber
	\end{align}
	Here, we have used Minkowski's inequality and that the Erlang kernels are non-negative and integrate to one.}
\end{proof}

\begin{proof}[Proof \chgtwo{of Proposition~\ref{thm:erlang:mixture:approximation}\ref{thm:erlang:mixture:approximation:pointwise:convergence}}]
	\chgtwo{In order to prove that the Erlang mixture approximation $\hat \alpha$ converges \chgthree{pointwise} to $\alpha$, we express it in terms of the delta family, $\delta_a: \Rnn \times \Rnn \rightarrow \Rnn$, defined by~\eqref{eq:erlang:mixture:delta:family} in Lemma~\ref{thm:erlang:mixture:delta:family}:
	\begin{align}
		\hat \alpha(t) &= \int_0^\infty \delta_a(t, s) \alpha(s) \incr s.
	\end{align}
	This equality holds according to Lemma~\ref{thm:erlang:mixture:delta:family}\ref{thm:erlang:mixture:delta:family:kernel:identity}, and since $\delta_a$ satisfies Lemma~\ref{thm:erlang:mixture:delta:family}\ref{thm:erlang:mixture:delta:family:integral}--\ref{thm:erlang:mixture:delta:family:sub:integral}, Lemma~\ref{thm:convergence:delta:family} guarantees that $\hat \alpha$ converges \chgthree{pointwise} to $\alpha$ for all $t \in \Rnn$.}
\end{proof}

\begin{proof}[Proof \chgtwo{of Proposition~\ref{thm:erlang:mixture:approximation}\ref{thm:erlang:mixture:approximation:integral:convergence}}]
	\chgtwo{First, we bound the absolute difference between the two integrals:
	\begin{align}
		\left|\int_0^t \hat \alpha(s) \incr s - \int_0^t \alpha(s) \incr s\right|
		&\leq \int_0^t |\hat \alpha(s) - \alpha(s)| \incr s
		 \leq t \sup_{s \in [0, t]} |\hat \alpha(s) - \alpha(s)|.
	\end{align}
	For finite $t$, the rightmost expression goes to zero as $a \rightarrow \infty$ and $M/a \rightarrow \infty$ \chgthree{because $\hat \alpha \rightarrow \alpha$ uniformly on bounded intervals of $t$ (see also the proof of Lemma~\ref{thm:convergence:delta:family})}. When $t$ is infinite, we split the integral at $t_0 \in \Rnn$:
	\begin{align}
		\left|\int_0^\infty \hat \alpha(s) \incr s - \int_0^\infty \alpha(s) \incr s\right|
		&\leq \int_0^{t_0} |\hat \alpha(s) - \alpha(s)| \incr s + \int_{t_0}^\infty |\hat \alpha(s) - \alpha(s)| \incr s.
	\end{align}
	As $\alpha$ is regular, the integral of its absolute value is finite. Furthermore, according to Lemma~\ref{lem:tight:erlang:mixture:approximation}, $\hat \alpha$ is tight, i.e., we can choose $t_0$ large enough that the second term is below $\epsilon/2$ for all values of $a$ and $M$. Next, we choose $a$ and $M$ large enough that the first term is below $\epsilon/2$ as well, which concludes the proof.}
\end{proof}

\begin{proof}[Proof \chgtwo{of Proposition~\ref{thm:erlang:mixture:approximation}\ref{thm:erlang:mixture:approximation:absolute:integral:convergence}}]
\chg{
	As $\alpha$ is regular, it is bounded. According to \chgtwo{Proposition}~\chgtwo{\ref{thm:erlang:mixture:approximation}\ref{thm:erlang:mixture:approximation:regularity}}, this means that $\hat \alpha$ is also bounded \chgtwo{because it is also regular}. Consequently, according to Lemma~\ref{lem:uniformly:integrable:erlang:mixture:approximation}, $\hat \alpha$ is uniformly integrable. Furthermore, according to Lemma~\ref{lem:tight:erlang:mixture:approximation} and \chgtwo{Proposition}~\ref{thm:erlang:mixture:approximation}\chgtwo{\ref{thm:erlang:mixture:approximation:pointwise:convergence}}, $\hat \alpha$ is also tight and converges pointwise to $\alpha$. Finally, according to Vitali's convergence lemma~\cite[Sec.~19.1, p.~399]{Royden:Fitzpatrick:2010}, uniform integrability, tightness\chgtwo{,} and pointwise convergence implies that $\hat \alpha$ also converges in the $L^1$-norm, i.e., the stated result.
}
\end{proof}

\subsection{\chgtwo{Uniform convergence}}\label{sec:ode:approximation:uniform:convergence}
\chgtwo{In this section, we prove Theorem~\ref{thm:ivp:approximation} on the uniform convergence of the solution to the IVP~\eqref{eq:lct:approximate:system:ODE}--\eqref{eq:lct:approximate:system:ODE:x0} \chgthree{in the Erlang ODE approximation} to the solution of the original IVP~\eqref{eq:system}--\eqref{eq:system:delay} involving DDEs.}

\chg{First, we introduce the state error, $e_x: \R \rightarrow \R^{n_x}$, the error of the memory state, $e_z: \R \rightarrow \R^{n_z}$, \chgthree{the error of the delayed variable, $e_r: \R \rightarrow \R^{n_r}$,} and the kernel error, $e_\alpha: \Rnn \rightarrow \R^{n_z \chmthree{\times n_r}}$, given by}
\begin{align}\label{eq:errors}
	e_x(t) &= \hat x(t) - x(t), &
	e_z(t) &= \hat z(t) - z(t), &
	e_r(t) &= \hat r(t) - r(t), &
	e_\alpha(t) &= \hat \alpha(t) - \alpha(t),
\end{align}
where $\hat x$\chgthree{,} $\hat z$\chgthree{, and $\hat r$} are the solution to~\eqref{eq:lct:approximate:system:ODE}--\eqref{eq:lct:approximate:system:ODE:x0}\chgthree{,} $x$\chgthree{,} $z$\chgthree{, and $r$} are the solution to \eqref{eq:system}--\eqref{eq:system:delay}\chgthree{, $\alpha$ is the kernel, and $\hat \alpha$ is the Erlang mixture approximation of $\alpha$. We let $\hat x(t) = x_0(t)$ and $\hat r(t) = r(t)$ for $t \in (-\infty, t_0]$}.
\begin{proof}[\chgtwo{Proof of Theorem~\ref{thm:ivp:approximation}}]
	\chg{
		\chgtwo{First, we prove that the solution, $\hat x$, to the IVP~\eqref{eq:lct:approximate:system:ODE}--\eqref{eq:lct:approximate:system:ODE:x0} \chgthree{in the Erlang ODE approximation} converges uniformly to that of the IVP~\eqref{eq:system}--\eqref{eq:system:delay} with DDEs.}
		According to \chgtwo{Proposition}~\ref{thm:erlang:mixture:approximation}\ref{thm:erlang:mixture:approximation:absolute:integral:convergence}, we can choose $\bar a$ \chgtwo{and $\bar M$} large enough that
		\begin{align}
			\int_0^\infty \|e_\alpha(t)\|_\infty \incr t &< \epsilon
		\end{align}
		when $a_{i\chmthree{j}} > \bar a$ \chgtwo{and $M_{i\chmthree{j}} > \bar M$} for $i = 1, \ldots, n_z$ \chgthree{and $j = 1, \ldots, n_r$} \chgtwo{provided that $\bar M/\bar a \rightarrow \infty$ as $\bar a \rightarrow \infty$}. Furthermore, since 1)~the delayed variable, $r$, is assumed to be bounded and \chgtwo{2})~the functions $f$ and $h$ are assumed to be differentiable (and thus also Lipschitz continuous), the result follows from a proof similar to that of Theorem~1 in the paper by Guglielmi and Hairer~\cite{Guglielmi:Hairer:2025}. For completeness, we present the details of the proof in Section~\ref{sec:proof:error:bound} of the supplementary material.
		\chgtwo{Specifically,}
		\begin{align}\label{eq:solution:bound}
			\|\chmtwo{e_x(t)}\|_\infty &\leq \epsilon w(t) \leq \epsilon w(t_f),
		\end{align}
		\chgtwo{where t}he auxiliary function $w: \R \rightarrow \Rnn$ is the solution to the DDE
		\begin{subequations}\label{eq:solution:bound:dde}
			\begin{align}
				\label{eq:solution:bound:dde:w0}
				w(t_0) &= 0, \\
				\label{eq:solution:bound:dde:w}
				\dot w(t) &= L_z K_r + L_x w(t) + L_z L_r \int_{t_0}^t \|\hat \alpha(t - s)\|_\infty w(s) \incr s, & t &\in [t_0, t_f],
			\end{align}
		\end{subequations}
		$L_x \in \Rp$ and $L_z \in \Rp$ are the Lipschitz constants of the first and second arguments of $f$, and $L_r \in \Rp$ is the Lipschitz constant of $h$. Furthermore, $K_r$ is the supremum of \chgtwo{the absolute value of} $r$ in the time interval $(-\infty, t_f]$ (see also the assumptions in Section~\ref{sec:system}). \chgtwo{T}he second inequality in~\eqref{eq:solution:bound} follows from the non-negativity of the right-hand side of the DDE~\eqref{eq:solution:bound:dde:w} \chgtwo{and ensures that the convergence is uniform on the time interval $[t_0, t_f]$. In conclusion, the state error, $e_x$, can be made arbitrarily small by making the kernel error sufficiently small.}

		\chgtwo{Next, we prove that $\hat z$ and $\hat r$ converge uniformly. As $h$ is \chgthree{Lipschitz continuous}, $\chgthree{\|}e_r(t)\chgthree{\|_\infty} \leq L_r \chgthree{\|}e_x(t)\chgthree{\|_\infty} \leq L_r \epsilon w(t_f)$ \chgthree{for $t \in [t_0, t_f]$}, and therefore, the convergence of $\hat r$ is uniform. \chgthree{As the Erlang mixture approximation, $\hat \alpha$, is regular (Proposition~\ref{thm:erlang:mixture:approximation}\ref{thm:erlang:mixture:approximation:regularity}), the integral of its absolute value is bounded, and specifically, $\int_0^\infty |\hat \alpha_{ij}(t)| \incr t \leq \rho_{ij}(\infty)$}
		which is finite by assumption \chgthree{(see the proof of Proposition~\ref{thm:erlang:mixture:approximation}\ref{thm:erlang:mixture:approximation:integral:convergence} in Section~\ref{sec:erlang:mixture:kernel:convergence})}.
		\chgthree{We use this to derive the following bound:
		\begin{align}
			\int_0^\infty \|\hat \alpha(t)\|_\infty \incr s
			&= \int_0^\infty \max_{i = 1, \ldots, n_z} \sum_{j=1}^{n_r} |\hat \alpha_{ij}(t)| \incr t
			 \leq \sum_{i=1}^{n_z} \sum_{j=1}^{n_r} \int_0^\infty |\hat \alpha_{ij}(t)| \incr t
			 \leq \sum_{i=1}^{n_z} \sum_{j=1}^{n_r} \rho_{ij}(\infty) \\
			&\leq \sum_{i=1}^{n_z} \max_{k = 1, \ldots, n_z} \sum_{j=1}^{n_r} \rho_{kj}(\infty)
			 = n_z \|\rho(\infty)\|_\infty. \nonumber
		\end{align}
		}%
		Using these two bounds, we derive a bound on the absolute memory state error:}
		\begin{align}\label{eq:memory:state:error:rewrite}
			\chmthree{
			\|e_z(t)\|_\infty
			&= \left\|\int_{-\infty}^t \hat \alpha(t - s) \hat r(s) \incr s - \int_{-\infty}^t \alpha(t - s) r(s) \incr s \right\|_\infty \\
			&= \left\|\int_{-\infty}^t \hat \alpha(t - s) e_r(s) + e_\alpha(t - s) r(s) \incr s \right\|_\infty \nonumber \\
			&\leq \int_{-\infty}^t \|\hat \alpha(t - s) e_r(s) + e_\alpha(t - s) r(s) \|_\infty \incr s \nonumber \\
			&\leq \int_{-\infty}^t \|\hat \alpha(t - s)\|_\infty \|e_r(s)\|_\infty \incr s + \int_{-\infty}^t \|e_\alpha(t - s)\|_\infty \|r(s)\|_\infty \incr s \nonumber \\
			&\leq \int_{-\infty}^t \|\hat \alpha(t - s)\|_\infty \incr s \sup_{s \in (-\infty, t]} \|e_r(s)\|_\infty + \int_{-\infty}^t \|e_\alpha(t - s)\|_\infty \incr s \sup_{s \in (-\infty, t]} \|r(s)\|_\infty \nonumber \\
			&\leq (n_z \|\rho(\infty)\|_\infty L_r w(t_f) + K_r) \epsilon.} \nonumber
		\end{align}
		\chgtwo{From the first to the second equality, we add and subtract} \chgthree{$\hat \alpha(t - s) r(s)$} (i.e., a mixed term).
		\chgtwo{In conclusion, we can bound the pointwise error, and the bound is independent of $t$ for $t \in \chmthree{[t_0}, t_f]$, i.e., the convergence is uniform over that interval.}
	}%
\end{proof}

\subsection{Steady states and stability criteria}\label{sec:lct:steady:state:stability}
\chg{In this section, we \chgtwo{show} that the steady state of the approximate system of ODEs \chgtwo{converges to} that of the original system, and we present the steady state stability criterion. In Section~\ref{sec:lct:convergence:roots}, we show that the roots involved in the criterion converge to those in the criterion for the original system of DDEs.}
\begin{lemma}\label{thm:lct:approximate:system:ODE:steady:state}
	A steady state, $\bar x \in \R^{n_x}$, of the \chgthree{Erlang ODE approximation}~\eqref{eq:lct:approximate:system:ODE} satisfies
	\begin{align}\label{eq:lct:steady:state}
		0 &= f(\bar x, \bar z), & \bar z &= \chmtwo{d} \bar r\chmtwo{,} & \chmtwo{\bar r} &= h(\bar x),
	\end{align}
	\chgtwo{where $d \in \R^{n_z \chmthree{\times n_r}}$ and $d_{i\chmthree{j}} = \sum_{m=0}^{M_{i\chmthree{j}}} c_{mi\chmthree{j}}$ for $i = 1, \ldots, n_z$ \chgthree{and $j = 1, \ldots, n_r$}.}
\end{lemma}
\begin{proof}
	In steady state,
	\begin{align}
		0 &= f(\bar x, \bar z), & 0 &= A \bar Z + B \bar r.
	\end{align}
	Consequently,
	\begin{align}
		\bar Z &= -A^{-1} B \bar r, & \bar z &= C \bar Z = -C A^{-1} B \bar r.
	\end{align}
	Note that each block-diagonal element of $A$ is a lower bidiagonal matrix with nonzero elements in the diagonal and the first subdiagonal. Therefore, $A$ is invertible. Specifically,
	\begin{subequations}\label{eq:lct:ode:steady:state:proof}
		\begin{align}
			\label{eq:lct:ode:steady:state:proof:ainv}
			A^{-1} &= \blkdiag\left(\left(A^{(\chmthree{1,}1)}\right)^{-1}, \chmthree{\left(A^{(1,2)}\right)^{-1},} \ldots, \left(A^{(n_z\chmthree{, n_r} )}\right)^{-1}\right), \\
			\label{eq:lct:ode:steady:state:proof:cainvb}
			\chmthree{\left(}C A^{-1} B\chmthree{\right)_{ij}} &= c^{(\chmthree{ij})} \left(A^{(\chmthree{ij})}\right)^{-1} b^{(\chmthree{ij})}.
		\end{align}
	\end{subequations}
	The inverse of $A^{(i\chmthree{j})}$ is a multiple of a lower triangular matrix of ones\chgthree{,} and the elements \chgthree{in}~\eqref{eq:lct:ode:steady:state:proof:cainvb} can be derived analytically:
	\begin{align}
		\left(A^{(i\chmthree{j})}\right)^{-1} &= \frac{-1}{a\chg{_{i\chmthree{j}}}}
		\begin{bmatrix}
			1 \\
			1 & 1 \\
			\vdots & & \ddots \\
			1 & 1 & \cdots & 1
		\end{bmatrix}, &
		c^{(i\chmthree{j})} \left(A^{(i\chmthree{j})}\right)^{-1} b^{(i\chmthree{j})} &= -\sum_{m=0}^{M_{i\chmthree{j}}} c_{mi\chmthree{j}}.
	\end{align}
	Consequently, $-C A^{-1} B = \chmtwo{d}$, and
	\begin{align}
		\bar z &= \chmtwo{d} \bar r\chmtwo{,} & \chmtwo{\bar r} &= h(\bar x)\chmtwo{.}
	\end{align}
\end{proof}
\begin{proof}[\chgtwo{Proof of Theorem~\ref{thm:ode:approximation}\ref{thm:ode:approximation:steady:state}}]
	\chgtwo{First, according to Proposition~\ref{thm:erlang:mixture:approximation}\ref{thm:erlang:mixture:approximation:integral:convergence},
	\begin{align}
		d_{i\chmthree{j}} = \sum_{m=0}^{M_{i\chmthree{j}}} c_{mi\chmthree{j}} = \int_0^{\chmthree{s_{M_{ij}+1}}} \hat \alpha_{i\chmthree{j}}(s) \incr s \rightarrow \beta_{i\chmthree{j}}(\infty)
	\end{align}
	as $a_{i\chmthree{j}} \rightarrow \infty$ when $M_{i\chmthree{j}}/a_{i\chmthree{j}} \rightarrow \infty$ as well. Next, the implicit function theorem~\cite[Chap.~7, Thm. 3]{Protter:Morrey:1985} guarantees that the solution to the steady state equations~\eqref{eq:steady:state} for the DDEs~\eqref{eq:system:x}--\eqref{eq:system:delay} is a locally unique and differentiable function if the Jacobian with respect to $\bar x$ is invertible. Consequently, in that case, the solution to the steady state equations~\eqref{eq:lct:steady:state} for the ODEs~\eqref{eq:lct:approximate:system:ODE} converges to that of the DDEs~\eqref{eq:system:x}--\eqref{eq:system:delay} \chgthree{if it exists}. Furthermore, the Jacobian is given by
	\begin{align}
		\diff{f}{\bar x}(\bar x, \bar z) &= \pdiff{f}{x}(\bar x, \bar z) + \pdiff{f}{z}(\bar x, \bar z) \pdiff{\bar z}{\bar x}, &
		\pdiff{\bar z}{\bar x} &= \beta(\infty) \pdiff{\bar r}{\bar x}, & \pdiff{\bar r}{\bar x} &= \pdiff{h}{x}(\bar x),
	\end{align}
	and the matrix in Theorem~\ref{thm:ode:approximation}\ref{thm:ode:approximation:steady:state} is obtained by substituting the matrices $F$, $G$, and $H$ from~\eqref{eq:jacobians}.}
\end{proof}
\begin{lemma}\label{thm:lct:stability}
	The \chgthree{Erlang ODE approximation}~\eqref{eq:lct:approximate:system:ODE} is locally asymptotically stable around a steady state, $\bar x$, that satisfies~\eqref{eq:lct:steady:state} if $\real \lambda < 0$ for all eigenvalues $\lambda \in \C$ of the Jacobian matrix
	\begin{align}\label{eq:lct:ode:jacobian}
		J &=
		\begin{bmatrix}
			\chmtwo{\hat F} & \chmtwo{\hat G} C \\
			B \chmtwo{\hat H} & A
		\end{bmatrix},
	\end{align}
	where \chgtwo{$\hat F \in \R^{n_x \times n_x}$, $\hat G \in \R^{n_x \times n_z}$, and $\hat H \in \R^{n_{\chmthree{r}} \times n_x}$ are the Jacobian matrices~\eqref{eq:jacobians} evaluated in the steady state \chgthree{of} the \chgthree{Erlang ODE approximation}.}
	Furthermore, \chg{the eigenvalues, $\lambda$, that are not equal to $-a_{i\chmthree{j}}$ for $i = 1, \ldots, n_z$ and $j = 1, \ldots, n_r$ will also satisfy the reduced characteristic equation}
	\begin{align}\label{eq:lct:stability:characteristic:equation}
		\chg{\hat P(\lambda) =} \det(\chmtwo{\hat F} - \lambda I + \chmtwo{\hat G} Q(\lambda) \chmtwo{\hat H}) &= 0, &
		Q_{ij}(\lambda) &= \sum\limits_{m=0}^{M_{i\chmthree{j}}} c_{mi\chmthree{j}} \left(\frac{a_{i\chmthree{j}}}{a_{i\chmthree{j}} + \lambda}\right)^{m+1},
	\end{align}
	where \chg{$\hat P: \C \rightarrow \C$ is the approximate characteristic function and} $Q: \C \rightarrow \C^{n_z \times n_{\chmthree{r}}}$ \chg{is an auxiliary matrix}.
\end{lemma}
\begin{proof}
	An eigenvalue, $\lambda$, is a root of the characteristic polynomial
	\begin{align}\label{eq:lct:stability:proof:determinant}
		\det(J - \lambda I)
		&=
		\det
		\begin{bmatrix}
			\chmtwo{\hat F} - \lambda I & \chmtwo{\hat G} C \\
			B \chmtwo{\hat H} & A - \lambda I
		\end{bmatrix} \\
		&=
		\det\left(\chmtwo{\hat F} - \lambda I - \chmtwo{\hat G} C (A - \lambda I)^{-1} B \chmtwo{\hat H}\right) \det(A - \lambda I), \nonumber
	\end{align}
	where we have used Schur's determinant formula (see, e.g., \cite{Zhang:2005})\chg{, which presumes that $\det(A - \lambda I) \neq 0$}. Next, we exploit the block-diagonal structure of $A$:
	\begin{align}
		A - \lambda I &= \blkdiag\left(A^{(\chmthree{1,}1)} - \lambda I^{(\chmthree{1,}1)}, \chmthree{A^{(1,2)} - \lambda I^{(1,2)},} \ldots, A^{(n_z\chmthree{, n_r})} - \lambda I^{(n_z\chmthree{, n_r})}\right).
	\end{align}
	Consequently,
	\begin{align}
		\det(A - \lambda I) &= \prod_{i=1}^{n_z} \chmthree{\prod_{j=1}^{n_r}} \det (A^{(i\chmthree{j})} - \lambda I^{(i\chmthree{j})}) = \prod_{i=1}^{n_z} \chmthree{\prod_{j=1}^{n_r}} (-1)^{M_{i\chmthree{j}}\chmthree{+1}} (a_{i\chmthree{j}} + \lambda)^{M_{i\chmthree{j}}\chmthree{+1}},
	\end{align}
	\chg{which is nonzero by assumption such that $A - \lambda I$ is invertible.} Next, we consider the matrix \chgthree{elements}
	\begin{align}\label{eq:lct:stability:proof:camliinvb}
		\chmthree{\left(}C (A - \lambda I)^{-1} B\chmthree{\right)_{ij}} &= c^{(\chmthree{ij})} \left(A^{(\chmthree{ij})} - \lambda I^{(\chmthree{ij})}\right)^{-1} b^{(\chmthree{ij})}.
	\end{align}
	First, we rewrite the matrix
	\begin{align}
		A^{(i\chmthree{j})} - \lambda I^{(i\chmthree{j})} &= a_{i\chmthree{j}} \left(L^{(i\chmthree{j})} - I^{(i\chmthree{j})}\right) - \lambda I^{(i\chmthree{j})} = -(a_{i\chmthree{j}} + \lambda)\left(\frac{-a_{i\chmthree{j}}}{a_{i\chmthree{j}} + \lambda} L^{(i\chmthree{j})} + I^{(i\chmthree{j})}\right).
	\end{align}
	Consequently, the inverse is
	\begin{align}
		\left(A^{(i\chmthree{j})} - \lambda I^{(i\chmthree{j})}\right)^{-1} &= \frac{-1}{a_{i\chmthree{j}} + \lambda}
		\begin{bmatrix}
			1 \\
			\frac{a_{i\chmthree{j}}}{a_{i\chmthree{j}} + \lambda} & 1 \\
			\vdots & & \ddots \\
			\left(\frac{a_{i\chmthree{j}}}{a_{i\chmthree{j}} + \lambda}\right)^{M_{i\chmthree{j}}} & \left(\frac{a_{i\chmthree{j}}}{a_{i\chmthree{j}} + \lambda}\right)^{M_{i\chmthree{j}}-1} & \cdots & 1
		\end{bmatrix},
	\end{align}
	as can be verified by direct calculation, and the elements of the matrix in~\eqref{eq:lct:stability:proof:camliinvb} are
	\begin{align}
		c^{(i\chmthree{j})} \left(A^{(i\chmthree{j})} - \lambda I^{(i\chmthree{j})}\right)^{-1} b^{(i\chmthree{j})} &= -\sum_{m=0}^{M_{i\chmthree{j}}} c_{mi\chmthree{j}} \left(\frac{a_{i\chmthree{j}}}{a_{i\chmthree{j}} + \lambda}\right)^{m+1} = -Q_{i\chmthree{j}}(\lambda).
	\end{align}
	Finally, we substitute into the left determinant on the right-hand side of~\eqref{eq:lct:stability:proof:determinant} and obtain
	\begin{align}
		\det(\chmtwo{\hat F} - \lambda I + \chmtwo{\hat G} Q(\lambda) \chmtwo{\hat H}) &= 0.
	\end{align}
\end{proof}
\begin{corollary}\label{lem:lct:approximate:characteristic:function}
	\chg{
		Let each element of $\hat \alpha: \chmtwo{\Rnn} \rightarrow \R^{n_z \chmthree{\times n_r}}$ be an Erlang mixture kernel. Then, the approximate characteristic function $\hat P: \C \rightarrow \C$ and the auxiliary matrix $Q: \C \rightarrow \C^{n_z \times n_{\chmthree{r}}}$ defined in~\eqref{eq:lct:stability:characteristic:equation} in \chgtwo{Lemma}~\ref{thm:lct:stability} are equal to
		\begin{align}\label{eq:lct:approximate:characteristic:function}
			\hat P(\lambda) &= \det(\chmtwo{\hat F} - \lambda I + \chmtwo{\hat G} \int_0^\infty e^{-\lambda s} \hat \alpha(s) \incr s \chmtwo{\hat H}), &
			Q_{i\chmthree{j}}(\lambda) &= \int_0^\infty e^{-\lambda s} \hat \alpha_{i\chmthree{j}}(s) \incr s,
		\end{align}
		for $i = 1, \ldots, n_z$ \chgthree{and $j = 1, \ldots, n_r$}.
	}
\end{corollary}
\begin{proof}
	First, we write out the following integral for an Erlang kernel of order $m$ with rate parameter $a_{i\chmthree{j}}$:
	\begin{align}
		\int_0^\infty e^{-\lambda s} \ell_{mi\chmthree{j}}(s) \incr s &= b_{mi\chmthree{j}} \int_0^\infty s^m e^{-(a_{i\chmthree{j}} + \lambda) s} \incr s = b_{mi\chmthree{j}} \frac{m!}{(a_{i\chmthree{j}} + \lambda)^{m+1}} = \left(\frac{a_{i\chmthree{j}}}{a_{i\chmthree{j}} + \lambda}\right)^{m+1}.
	\end{align}
	This follows from the fact that the integral of an Erlang kernel of order $m$ with rate parameter $a_{i\chmthree{j}} + \lambda$ is one. Next, we use this result to write out the following integral involving an Erlang mixture kernel of order $M_{i\chmthree{j}}$ with rate parameter $a_{i\chmthree{j}}$:
	\begin{align}
		\int_0^\infty e^{-\lambda s} \hat \alpha_{i\chmthree{j}}(s) \incr s &= \sum_{m=0}^{M_{i\chmthree{j}}} c_{mi\chmthree{j}} \int_0^\infty e^{-\lambda s} \ell_{mi\chmthree{j}}(s) \incr s = \sum_{m=0}^{M_{i\chmthree{j}}} c_{mi\chmthree{j}} \left(\frac{a_{i\chmthree{j}}}{a_{i\chmthree{j}} + \lambda}\right)^{m+1} = Q_{i\chmthree{j}}(\lambda).
	\end{align}
	Consequently,
	\begin{align}
		\chmtwo{\hat G} \int_0^\infty e^{-\lambda s} \hat \alpha(s) \incr s \chmtwo{\hat H}
		&= \chmtwo{\hat G} Q(\lambda) \chmtwo{\hat H},
	\end{align}
	and the characteristic \chg{function in}~\eqref{eq:lct:approximate:characteristic:function} becomes
	\begin{align}
		\chg{\hat P(\lambda) =} \det(\chmtwo{\hat F} - \lambda I + \chmtwo{\hat G} Q(\lambda) \chmtwo{\hat H}).
	\end{align}
\end{proof}

\subsection{Convergence of the roots}\label{sec:lct:convergence:roots}
\chg{In this section, we use Hurwitz' convergence theorem~\cite[Thm.~5.6.4]{Hahn:Epstein:1996} to prove that the roots of the approximate characteristic function, $\hat P: \C \rightarrow \C$ in \chgtwo{Lemma}~\ref{thm:lct:stability}, converge to \chgtwo{the isolated roots} of $P: \C \rightarrow \C$ in Corollary~\ref{thm:stability} as the rate parameters \chgtwo{and order} in the Erlang mixture approximation go to infinity. That is, we show that stability analyses based on the \chgthree{Erlang ODE approximation} \chgtwo{can be used to infer information about the stability of the original DDEs}.}
\begin{lemma}\label{lem:approximate:characteristic:function:convergence}
	\chg{
		Let each element of $\alpha: \Rnn \rightarrow \chgtwo{\R}^{n_z \chmthree{\times n_r}}$ be a regular and exponentially bounded kernel, i.e., let there exist $L, \rho \in \Rp$ such that $\chmtwo{|}\alpha_{i\chmthree{j}}(t)\chmtwo{|} \leq L e^{-\rho t}$ for all $t \in \Rnn$ and for $i = 1, \ldots, n_z$ \chgthree{and $j = 1, \ldots, n_r$}. Furthermore, let the $i\chmthree{,j}$'th element of $\hat \alpha: \Rnn \rightarrow \chgtwo{\R}^{n_z \chmthree{\times n_r}}$ be \chgtwo{the} Erlang mixture approximation \chgtwo{of $\alpha_{i\chmthree{j}}$ of order $M_{i\chmthree{j}} \in \Nnn$} with rate parameter $a_{i\chmthree{j}} \in \Rp$.
		Then, for a given $\lambda_{\min} \in \Rnp$ for which $-\rho < \lambda_{\min}$, the approximate characteristic function $\hat P: \C \rightarrow \C$ defined in~\eqref{eq:lct:stability:characteristic:equation} converges uniformly to the characteristic function $P: \C \rightarrow \C$ defined in~\eqref{eq:characteristic:equation} for every $\lambda \in \C$ for which $\real \lambda \geq \lambda_{\min}$ as $a_{\chmtwo{i}\chmthree{j}} \rightarrow \infty$ \chgtwo{if $M_{i\chmthree{j}}/a_{i\chmthree{j}} \rightarrow \infty$ as well for $i = 1, \ldots, n_z$ \chgthree{and $j = 1, \ldots, n_r$}}.
	}
\end{lemma}
\begin{proof}
	\chg{%
		Both $\hat P$ and $P$ are determinants of characteristic matrices. Consequently, since the determinant is continuous, $\hat P$ converges to $P$ if the corresponding characteristic matrix converges.
		First, we bound the following expression and split the integral,
		\begin{multline}
			|Q_{i\chmthree{j}}(\lambda) - \int_0^\infty e^{-\lambda s} \alpha_{i\chmthree{j}}(s) \incr s|
			= \left|\int_0^\infty e^{-\lambda s} \hat \alpha_{i\chmthree{j}}(s) \incr s - \int_0^\infty e^{-\lambda s} \alpha_{i\chmthree{j}}(s) \incr s\right| \\
			\leq \int_0^\infty |e^{-\lambda s}| |\hat \alpha_{i\chmthree{j}}(s) - \alpha_{i\chmthree{j}}(s)| \incr s \\
			= \int_0^{s_0} e^{-\real \lambda s} |\hat \alpha_{i\chmthree{j}}(s) - \alpha_{i\chmthree{j}}(s)| \incr s + \int_{s_0}^\infty e^{-\real \lambda s} |\hat \alpha_{i\chmthree{j}}(s) - \alpha_{i\chmthree{j}}(s)| \incr s.
		\end{multline}
		Here, we have used the expression for $Q_{i\chmthree{j}}$ from Corollary~\ref{lem:lct:approximate:characteristic:function}.
		We consider the second integral first. The exponential boundedness of $\alpha$ implies that $\hat \alpha$ is also exponentially bounded (see \chgtwo{Corollary~\ref{thm:erlang:mixture:approximation:exponential:boundedness}}) such that
		\begin{align}
			\int_{s_0}^\infty e^{-\real \lambda s} |\hat \alpha_{i\chmthree{j}}(s) - \alpha_{i\chmthree{j}}(s)| \incr s
			&\leq \int_{s_0}^\infty e^{-\lambda_{\min} s} 2 L e^{-\bar \rho s} \incr s
			= 2 L \int_{s_0}^\infty e^{-(\lambda_{\min} + \bar \rho) s} \incr s \\
			&= \frac{2 L}{\lambda_{\min} + \bar \rho} e^{-(\lambda_{\min} + \bar \rho) s_0}. \nonumber
		\end{align}
		By assumption, $\lambda_{\min} > -\rho$, and according to \chgtwo{Corollary~\ref{thm:erlang:mixture:approximation:exponential:boundedness}}, we can always choose $\bar a \in \Rp$ \chgtwo{and $\bar M \in \Nnn$ (for which $\bar M/\bar a \rightarrow \infty$ as $\bar a \rightarrow \infty$)} such that $\bar \rho$ is arbitrarily close to $\rho$ for all $a_{i\chmthree{j}} > \bar a$ \chgtwo{and $M_{i\chmthree{j}} > \bar M$}, i.e., such that $\lambda_{\min} + \bar \rho > 0$.
		Next, we bound the first integral, and we exploit the non-positivity of $\lambda_{\min}$:
		\begin{align}
			\int_0^{s_0} e^{-\real \lambda s} |\hat \alpha_{i\chmthree{j}}(s) - \alpha_{i\chmthree{j}}(s)| \incr s
			&\leq \int_0^{s_0} e^{-\lambda_{\min} s} |\hat \alpha_{i\chmthree{j}}(s) - \alpha_{i\chmthree{j}}(s)| \incr s \\
			&\leq e^{-\lambda_{\min} s_0} \int_0^{s_0} |\hat \alpha_{i\chmthree{j}}(s) - \alpha_{i\chmthree{j}}(s)| \incr s. \nonumber
		\end{align}
		According to \chgtwo{Proposition}~\ref{thm:erlang:mixture:approximation}\ref{thm:erlang:mixture:approximation:absolute:integral:convergence} \chgtwo{and because the integrand is non-negative}, we can choose $\bar a$ \chgtwo{and $\bar M$ (for which $\bar M/\bar a \rightarrow \infty$ as $\bar a \rightarrow \infty$)} large enough that the last integral is arbitrarily small \chgtwo{for $a_{i\chmthree{j}} > \bar a$ and $M_{i\chmthree{j}} > \bar M$}. Finally, we bound the norm difference between the characteristic matrices in the expressions for $\hat P$ and $P$ in~\eqref{eq:lct:approximate:characteristic:function} and~\eqref{eq:characteristic:equation}, respectively, and we use the $\infty$-norm for convenience:
		\begin{align}
			\left\|\chmtwo{\hat F} - \lambda I + \chmtwo{\hat G} \int_0^\infty e^{-\lambda s} \hat \alpha(s) \incr s \chmtwo{\hat H} - \left(F - \lambda I + G \int_0^\infty e^{-\lambda s} \alpha(s) \incr s H\right)\right\|_\infty \\
			\chmtwo{
			= \Bigg\|\hat F - F + \hat G \int_0^\infty e^{-\lambda s} \hat \alpha(s) \incr s (\hat H - H)
			} \nonumber \\
			\chmtwo{
				+ \hat G \int_0^\infty e^{-\lambda s} (\hat \alpha(s) - \alpha(s)) \incr s H + (\hat G - G) \int_0^\infty e^{-\lambda s} \alpha(s) \incr s H \Bigg\|_\infty
		 	} \nonumber \\
		 	\chmtwo{
			\leq \|e_F\|_\infty + \|\hat G\|_\infty \int_0^\infty |e^{-\lambda s}| \|\hat \alpha(s)\|_\infty \incr s \|e_H\|_\infty \nonumber \\
			+ \|\hat G\|_\infty \left\|\int_0^\infty e^{-\lambda s} (\hat \alpha(s) - \alpha(s)) \incr s\right\|_\infty \|H\|_\infty + \|e_G\| \int_0^\infty |e^{-\lambda s}| \|\alpha(s)\|_\infty \incr s \|H\|_\infty
			} \nonumber
		\end{align}
		\chgtwo{Here, $e_F = \hat F - F \in \R^{n_x \times n_x}$, $e_G = \hat G - G \in \R^{n_x \times n_z}$, and $e_H = \hat H - H \in \R^{n_{\chmthree{r}} \times n_x}$. As $f$ and $h$ are assumed to be continuously differentiable, the Jacobians, $F$, $G$, and $H$, are continuous and since the steady state of the approximate system converges to that of the original system as $a_{i\chmthree{j}} \rightarrow \infty$ and $M_{i\chmthree{j}}/a_{i\chmthree{j}} \rightarrow \infty$, the errors $e_F$, $e_G$, and $e_H$ converge to zero. The norm of the integral in the third term is
		\begin{align}
			\left\|\int_0^\infty e^{-\lambda s} (\hat \alpha(s) - \alpha(s)) \incr s\right\|_\infty
			&= \max_{i = 1, \ldots, n_z} \chmthree{\sum_{j=1}^{n_r}} \left|\int_0^\infty e^{-\lambda s} (\hat \alpha_{i\chmthree{j}}(s) - \alpha_{i\chmthree{j}}(s)) \incr s\right| \\
			&\leq \max_{i = 1, \ldots, n_z} \chmthree{\sum_{j=1}^{n_r}} \int_0^\infty |e^{-\lambda s}| |\hat \alpha_{i\chmthree{j}}(s) - \alpha_{i\chmthree{j}}(s)| \incr s. \nonumber
		\end{align}
		}%
		As we have shown above, for any $\lambda \in \C$ for which $\real \lambda \chmthree{\geq} \lambda_{\min}$, we can choose $s_0$\chgtwo{,} $\bar a$\chgtwo{, and $\bar M$ (for which $\bar M/\bar a \rightarrow \infty$ as $\bar a \rightarrow \infty$)} such that this expression is below any $\epsilon \in \Rp$ whenever $a_{i\chmthree{j}} > \bar a$ \chgtwo{and $M_{i\chmthree{j}} > \bar M$} for $i = 1, \ldots, n_z$ \chgthree{and $j = 1, \ldots, n_r$}, which concludes the proof.
	}
\end{proof}
\begin{lemma}\label{lem:approximate:characteristic:function:holomorphic}
	\chg{
		Let the elements of $\alpha: \Rnn \rightarrow \chgtwo{\R}^{n_z \chmthree{\times n_r}}$ be exponentially bounded and regular, i.e., let there exist $L, \rho \in \Rp$ such that $\chmtwo{|}\alpha_{i\chmthree{j}}(t)\chmtwo{|} \leq L e^{-\rho t}$ for all $t \in \Rnn$ and $i = 1, \ldots, n_z$ \chgthree{and $j = 1, \ldots, n_r$}, and let the $i\chmthree{,j}$'th element of $\hat \alpha: \Rnn \rightarrow \chgtwo{\R}^{n_z \chmthree{\times n_r}}$ be \chgtwo{the} Erlang mixture \chgtwo{approximation of $\alpha_{i\chmthree{j}}$} of order \chgtwo{$M_{i\chmthree{j}} \in \Nnn$} with rate parameter $a_{i\chmthree{j}} \in \Rp$.
		Then, the characteristic function $\hat P: \C \rightarrow \C$ defined by~\eqref{eq:lct:stability:characteristic:equation} is holomorphic for $\lambda \in \C$ for which $\real \lambda > -\rho$.
	}
\end{lemma}
\begin{proof}
	\chg{
		We prove that the complex derivatives of the \chgthree{elements} of the auxiliary matrix $Q: \C \rightarrow \C^{n_z \times n_{\chmthree{r}}}$ defined in~\eqref{eq:lct:stability:characteristic:equation} exist for an arbitrary point and direction $\lambda_0, \Delta \lambda \in \C$, i.e., for $\lambda = \lambda_0 + w \Delta \lambda$ where $w \in \Rnn$. We derive the derivative from the expression in Corollary~\ref{lem:lct:approximate:characteristic:function}:
		\begin{align}
			\pdiff{Q_{i\chmthree{j}}}{w}(\lambda) &= -\int_0^\infty \pdiff{\lambda}{w} s e^{-\lambda s} \hat \alpha_{i\chmthree{j}}(s) \incr s = -\Delta \lambda \int_0^\infty s e^{-\lambda s} \hat \alpha_{i\chmthree{j}}(s) \incr s.
		\end{align}
		The derivative exists if the integral is finite. According to \chgtwo{Corollary}~\ref{thm:erlang:mixture:approximation:exponential:boundedness}, $\hat \alpha_{i\chmthree{j}}$ is exponentially bounded if $\alpha_{i\chmthree{j}}$ is. Specifically, $\hat \alpha_{i\chmthree{j}}(t) \leq L e^{-\bar \rho t}$ for all $t$ where $0 \leq \bar \rho < \rho$. Consequently,
		\begin{align}
			\left|\int_0^\infty s e^{-\lambda s} \hat \alpha_{i\chmthree{j}}(s) \incr s\right| &\leq \int_0^\infty s e^{-\real \lambda s} L e^{-\bar \rho s} \incr s = L \int_0^\infty s e^{-(\real \lambda + \bar \rho) s} \incr s = \frac{L}{(\real \lambda + \bar \rho)^2}
		\end{align}
		for $\real \lambda > -\bar \rho$. Furthermore, for any $\lambda$ for which $\real \lambda > -\rho$, we can choose $\bar a$ \chgtwo{and $\bar M$ (for which $\bar M/\bar a \rightarrow \infty$ as $\bar a \rightarrow \infty$)} large enough that $\real \lambda > -\bar \rho > -\rho$ whenever $a_{i\chmthree{j}} > \bar a$ \chgtwo{and $M_{i\chmthree{j}} > \bar M$} for $i = 1, \ldots, n_z$ \chgthree{and $j = 1, \ldots, n_r$}. Consequently, the derivative exists when $\real \lambda > -\rho$. Finally, as the determinant of a holomorphic function is also holomorphic and the remaining terms in the argument of the determinant in the expression for $\hat P$ in~\eqref{eq:lct:stability:characteristic:equation} are constant and linear in $\lambda$, $\hat P$ is holomorphic when $\real \lambda > \chmthree{-}\rho$.
	}
\end{proof}
\begin{proof}[\chgtwo{Proof of Theorem~\ref{thm:ode:approximation}\ref{thm:ode:approximation:roots}}]
	\chg{
		According to Lemma~\ref{lem:approximate:characteristic:function:holomorphic} and~\ref{lem:approximate:characteristic:function:convergence}, respectively, $\hat P$ is holomorphic and converges \chgtwo{uniformly} to $P$ for $\lambda \in \C$ with real part larger than or equal to $\lambda_{\min}$. Consequently, the convergence of the roots \chgtwo{of $\hat P$} follows from Hurwitz' convergence theorem~\cite[Thm.~5.6.4]{Hahn:Epstein:1996} since the roots are isolated by assumption.
	}
\end{proof}

\subsection{\chg{Error dynamics}}\label{sec:lct:error:dynamics}
\chg{In this section, we analyze the difference between the solution to the \chgtwo{IVP~\eqref{eq:lct:approximate:system:ODE}--\eqref{eq:lct:approximate:system:ODE:x0}} \chgthree{in the Erlang ODE approximation} and that of the original \chgtwo{IVP}~\eqref{eq:system}--\eqref{eq:system:delay} \chgtwo{involving DDEs}, and we show that it converges to zero \chgthree{as $t \rightarrow \infty$ and }as the Erlang mixture approximation \chgthree{converges to the true kernel if the assumptions in Theorem~\ref{thm:ode:approximation}\ref{thm:ode:approximation:stability} are satisfied}.}

The state error \chgthree{defined in~\eqref{eq:errors}} satisfies the \chgtwo{IVP}
\begin{subequations}\label{eq:error:system}
	\begin{align}
		\label{eq:error:system:e0}
		e_x(t) &= 0, & t &\in (-\infty, t_0], \\
		\label{eq:error:system:e}
		\dot e_x(t) &= f(\hat x(t), \hat z(t)) - f(x(t), z(t)), & t &\in [t_0, t_f].
	\end{align}
\end{subequations}
\begin{proof}[\chgtwo{Proof of Theorem~\ref{thm:ode:approximation}\ref{thm:ode:approximation:stability}}]
	\chg{The steady states of the original and approximate system are \chgtwo{not} identical (see \chgtwo{Lemma}~\ref{thm:lct:approximate:system:ODE:steady:state}). \chgtwo{Consequently, the steady state of the error, $\bar e_x \in \R^{n_x}$, will not be zero.}
	The linearized system corresponding to the DDE~\eqref{eq:error:system:e} describing the state error is
	\begin{align}
		\dot E_x(t) &= \hat F \hat X(t) + \hat G \int_{-\infty}^t \hat \alpha(t - s) \hat H \hat X(s) \incr s - \left(F X(t) + G \int_{-\infty}^t \alpha(t - s) H X(s) \incr s\right),
	\end{align}
	\chgtwo{where $E_x: \R \rightarrow \R^{n_x}$ is the deviation from the steady state error, i.e., $E_x(t) = e_x(t) - \bar e_x$, and $\hat X, X: \R \rightarrow \R^{n_x}$ are the deviations of the state variables from their respective steady states. Next, we rewrite the system by adding and subtracting terms such that $\hat X$ does not appear on the right-hand side:
	\begin{align}
		\dot E_x(t)
		&= \hat F E_x(t) + e_F X(t) + \chmthree{\hat G} \int_{-\infty}^t \hat \alpha(t - s) \hat H E_x(s) \incr s
	 + \hat G \int_{-\infty}^t \hat \alpha(t - s) e_H X(s) \incr s \\
		&+ \hat G \int_{-\infty}^t e_\alpha(t - s) H X(s) \incr s
	 + e_G \int_{-\infty}^t \alpha(t - s) H X(s) \incr s. \nonumber
	\end{align}
	}%
	As the system depends on the deviation, $X$, of the state variables from the steady state, $\bar x$, the stability cannot be investigated independently. Therefore, we augment it with the linearized system~\eqref{eq:linearized:system} to obtain
	\begin{align}\label{eq:augmented:linearized:system}
		\begin{bmatrix}
			\dot X(t) \\
			\chmtwo{\dot E}_x(t)
		\end{bmatrix}
		=
		\begin{bmatrix}
			F \\
			\chmtwo{e_F} & \chmtwo{\hat F}
		\end{bmatrix}
		\begin{bmatrix}
			X(t) \\
			\chmtwo{E}_x(t)
		\end{bmatrix}
		+
		\begin{bmatrix}
			G \\
			\chmtwo{e_G} & \chmtwo{\hat G}
		\end{bmatrix}
		\int_{-\infty}^t
		\begin{bmatrix}
			\alpha(t - s) \\
			e_\alpha(t - s) & \hat \alpha(t - s)
		\end{bmatrix}
		\begin{bmatrix}
			H \\
			\chmtwo{e_H} & \chmtwo{\hat H}
		\end{bmatrix}
		\begin{bmatrix}
			X(s) \\
			\chmtwo{E}_x(s)
		\end{bmatrix}
		\incr s.
	\end{align}
	The corresponding characteristic function is
	\begin{align}
		\det\left(
		\begin{bmatrix}
			F \\
			\chmtwo{e_F} & \chmtwo{\hat F}
		\end{bmatrix}
		-
		\begin{bmatrix}
			\lambda I \\
			& \lambda I
		\end{bmatrix}
		+
		\begin{bmatrix}
			G \\
			\chmtwo{e_G} & \chmtwo{\hat G}
		\end{bmatrix}
		\int_0^\infty e^{-\lambda s}
		\begin{bmatrix}
			\alpha(s) \\
			e_\alpha(s) & \hat \alpha(s)
		\end{bmatrix}
		\incr s
		\begin{bmatrix}
			H \\
			\chmtwo{e_H} & \chmtwo{\hat H}
		\end{bmatrix}
		\right) \\
		= \det(F - \lambda I + G \int_0^\infty e^{-\lambda s} \alpha(s) \incr s H) \det(\chmtwo{\hat F} - \lambda I + \chmtwo{\hat G} \int_0^\infty e^{-\lambda s} \hat \alpha(s) \incr s \chmtwo{\hat H}) \nonumber \\
		= P(\lambda) \hat P(\lambda), \nonumber
	\end{align}
	where $P$ and $\hat P$ are the characteristic functions corresponding to the original system of DDEs~\eqref{eq:system:x}--\eqref{eq:system:delay} and the approximate system \chgtwo{of ODEs~\eqref{eq:lct:approximate:system:ODE}}, respectively.
	Consequently, the roots of $P$ and $\hat P$ are also roots of the above characteristic function, and if their real parts are negative, the augmented system is asymptotically stable and the error dynamics are locally asymptotically stable.
	\chgthree{Furthermore, according to Theorem~\ref{thm:ode:approximation}\ref{thm:ode:approximation:roots}, the roots of $\hat P$ converge to the isolated roots of $P$ as $a_{ij} \rightarrow \infty$ when $M_{ij}/a_{ij} \rightarrow \infty$ as well for $i = 1, \ldots, n_z$ and $j = 1, \ldots, n_r$, and all roots with real part larger than $-\sigma$ are isolated, where $\sigma \in \Rp$ is the rate parameter in the exponential bound on the initial state, $x_0$ (see Corollary~\ref{thm:stability}). Additionally, by assumption, the roots have negative real parts because the DDEs~\eqref{eq:system:x}--\eqref{eq:system:delay} are locally asymptotically stable around their steady state. Consequently, $E_x(t) \rightarrow 0$ as $t \rightarrow \infty$ since $\bar e_x \rightarrow 0$ according to Theorem~\ref{thm:ode:approximation}\ref{thm:ode:approximation:steady:state}.}}
\end{proof}

	\section{Algorithm}\label{sec:algo}
\chgtwo{In this section,} we describe an algorithm for determining the coefficients and the rate parameter in an $M$'th order Erlang mixture approximation of a regular kernel, $\alpha: \Rnn \rightarrow \chmtwo{\R}$, for given $M \in \Nnn$. For simplicity, we only consider scalar kernels in this section. The algorithm consists of two steps: First, we identify a domain in which we approximate the kernel and then, we use a least-squares approach to determine the coefficients. Furthermore, we describe a reference approach based on the theoretical expressions \chgtwo{in}~\eqref{eq:erlang:mixture:approximation} for the coefficients. Finally, we describe the implementation of the algorithm and the simulation of the original DDEs~\eqref{eq:system:x}--\eqref{eq:system:delay} and the \chgthree{Erlang ODE approximation}~\eqref{eq:lct:approximate:system:ODE}.

\subsection{Domain}\label{sec:algo:domain}
In order to determine the approximation domain, we choose $t_h \in \Rnn$ as the solution to the equation $\chgtwo{\rho(\infty)} - \chgtwo{\rho}(t) = \epsilon$ \chgtwo{for a given threshold, $\epsilon \in \Rp$,} and approximate $\alpha$ in the interval $[0, t_h]$.
We use bisection to determine $t_h$ approximately.
We assume that an initial interval, $[t_0^\ell, t_0^u]$, is available and that it contains the root, i.e., that $\chgtwo{\rho(\infty)} - \chgtwo{\rho}(t_0^\ell) > \epsilon$ and $\chgtwo{\rho(\infty)} - \chgtwo{\rho}(t_0^u) < \epsilon$. If that is not the case, the interval can be shifted or scaled such that it does. Next, the $k$'th iteration has converged and an approximate solution has been found if $|\chgtwo{\rho(\infty)} - \chgtwo{\rho}(t_k^m) - \epsilon|$ is below a specified threshold, where $t_k^m = \frac{1}{2}(t_k^\ell + t_k^u)$ is the midpoint. Otherwise, the next iteration considers the lower half of the interval, $[t_{k+1}^\ell, t_{k+1}^u] = [t_k^\ell, t_k^m]$, if $\chgtwo{\rho(\infty)} - \chgtwo{\rho}(t_k^m) < \epsilon$. \chgtwo{Conversely}, if $\chgtwo{\rho(\infty)} - \chgtwo{\rho}(t_k^m) > \epsilon$, the next iteration considers the upper half of the interval, $[t_{k+1}^\ell, t_{k+1}^u] = [t_k^m, t_k^u]$.
\begin{remark}
	If the function $\chgtwo{\rho}$ \chgtwo{in}~\eqref{eq:integrals} cannot be derived analytically for a given kernel, $\alpha$, it can be approximated using numerical quadrature (e.g., with \matlab{}'s function \integral{}).
\end{remark}
\begin{remark}
	The equation $\chgtwo{\rho(\infty)} - \chgtwo{\rho}(t) = \epsilon$ can also be solved using a gradient-based approach, e.g., Newton's method. However, $\chgtwo{\rho(\infty)} - \chgtwo{\rho}(t)$ rounds to zero in finite-precision arithmetic when $t$ is too large, e.g., in a computer implementation. Consequently, Newton iterations may stall if an iterate happens to become too large.
\end{remark}

\subsection{Identify kernel parameters}\label{sec:algo:least:squares}
\chgtwo{
For a given order, $M \in \Nnn$, and $t_h \in \Rnn$, we choose the rate parameter $a \in \Rp$ as
\begin{align}\label{eq:algo:rate:parameter}
	a &= \frac{M+1}{t_h}.
\end{align}
Next, the objective is to determine the values of the coefficients, $c_m \in \R$ for $m = 0, \ldots, M$, that minimize the integral of the squared error over the interval $[0, t_h]$,
\begin{align}
	\int_0^{t_h} (\alpha(t) - \hat \alpha(t))^2 \incr t,
\end{align}
while satisfying the constraint that the coefficients must sum to the integral of $\alpha$ over the same interval. We approximate this integral by a left rectangle rule (such that the objective function includes $\alpha(0)$) with $N$ rectangles, and we determine the coefficients by solving the regularized and constrained quadratic program (QP)
\begin{subequations}\label{eq:algo:nlp}
	\begin{align}
		\label{eq:algo:nlp:obj}
		\min_{\{c_{m}\}_{m=0}^M} \hspace{16pt} &\phi = \frac{1}{2} \sum_{k=0}^{N-1} (\alpha(t_k) - \hat \alpha(t_k))^2 \Delta t + \frac{w}{2} \sum_{m=0}^M c_m^2, \\
		\label{eq:algo:nlp:sum}
		\text{subject to} \quad &
		\sum_{m=0}^M c_m = \beta(t_h),
	\end{align}
\end{subequations}
where the quadrature points are $t_k = k \Delta t$ for $k = 0, \ldots, N-1$\chg{, $\Delta t \in \Rp$ is the width of the rectangles, and $w \in \Rp$ is a regularization weight}. Furthermore, the objective function involves the regular kernel, $\alpha: \Rnn \rightarrow \R$, and the $M$'th order Erlang mixture kernel, $\hat \alpha: \Rnn \rightarrow \R$, with coefficients $\{c_m\}_{m=0}^M$ and rate parameter $a$, evaluated at the quadrature points.
\begin{proposition}
	The unique solution $c \in \R^{M+1}$ to the regularized and constrained least-squares problem~\eqref{eq:algo:nlp} is a column vector given by
	\begin{align}\label{eq:algo:nlp:sol}
		c &= (L^T L + w I)^{-1} (e \lambda + L^T g),
	\end{align}
	where $\lambda \in \R$ is a Lagrange multiplier, $e \in \R^{M+1}$ is a column vector of ones, and  $L \in \Rnn^{N \times M+1}$ and $g \in \R^N$ are a matrix and vector whose elements are equal to the basis functions and the kernel $\alpha$ evaluated in the quadrature points \chgthree{and scaled by $\sqrt{\Delta t}$}, respectively:
	\begin{align}\label{eq:algo:nlp:lagrange:multiplier}
		\lambda &= \frac{\beta(t_h) - e^T (L^T L + w I)^{-1} L^T g}{e^T (L^T L + w I)^{-1} e}, & L_{km} &= \chmthree{\sqrt{\Delta t}}\, \ell_m(t_k), & g_k &= \chmthree{\sqrt{\Delta t}}\, \alpha(t_k),
	\end{align}
	for $m = 0, \ldots, M$ and $k = 0, \ldots, N-1$.
\end{proposition}
\begin{proof}
	The QP~\eqref{eq:algo:nlp} is in the form
	\begin{subequations}
		\begin{align}
			\min_c \hspace{18pt} \phi &= \frac{1}{2} c^T (L^T L + w I) c \chmthree{-} g^T L c, \\
			\mathrm{subject~to} \quad & e^T c = \beta(t_h),
		\end{align}
	\end{subequations}
	where $I \in \R^{M+1 \times M+1}$ is an identity matrix. This is an equality-constrained quadratic program and the first-order necessary conditions for a solution is that there must exist a Lagrange multiplier, $\lambda$, such that the following linear system of equations is satisfied~\cite[Sec.~16.1]{Nocedal:Wright:2006}:
	\begin{align}
		\begin{bmatrix}
			L^T L + w I & -e \\
			-e^T
		\end{bmatrix}
		\begin{bmatrix}
			c \\ \lambda
		\end{bmatrix}
		&=
		\begin{bmatrix}
			L^T g \\ -\beta(t_h)
		\end{bmatrix}.
	\end{align}
	The expression for $c$ in~\eqref{eq:algo:nlp:sol} is the solution to the first row block. Next, we substitute it into the second row block:
	\begin{align}
		-e^T c &= -e^T (L^T L + w I)^{-1} (e \lambda + L^T g) = -\beta(t_h).
	\end{align}
	The expression for $\lambda$ in~\eqref{eq:algo:nlp:lagrange:multiplier} is obtained by isolating $\lambda$ in the rightmost equation. Finally, the solution is locally unique because the Hessian $L^T L + w I$ is positive definite when $w > 0$, i.e., $x^T (L^T L + w I) x = \|L x\|_2^2 + w \|x\|_2^2 > 0$ for all $x \neq 0$~\cite[Thm.~12.6]{Nocedal:Wright:2006}.
\end{proof}}

\subsection{Theoretical Erlang mixture approximation}\label{sec:algo:theoretical:approximation}
We will compare the accuracy obtained with the least-squares approach described in Section~\ref{sec:algo:least:squares} to that of a reference approach based on the theoretical values of the coefficients from \chgthree{Definition}~\ref{def:erlang:mixture:approximation}. First, we determine the size of the approximation interval, $t_h$, such that $\chmtwo{\rho(\infty)} - \chmtwo{\rho}(t_h) \approx \epsilon$, e.g., using bisection as described in Section~\ref{sec:algo:domain}. Next, for a given order, $M$, we determine the rate parameter by \chgtwo{\eqref{eq:algo:rate:parameter}}, and we use the theoretical expression \chgtwo{in}~\eqref{eq:erlang:mixture:approximation} to compute the coefficients, $c_m$, for $m = 0, \ldots, M$.

\subsection{Implementation}\label{sec:algo:implementation}
We exploit that \chg{the recursion} $\chg{\ell}_m(t) = (at/m) \chg{\ell}_{m-1}(t)$ for $m = 1, \ldots, M$ where $\chg{\ell}_0(t) = a e^{-at}$ is less prone to rounding errors than a straightforward evaluation of the expression~\eqref{eq:erlang:pdf} in the definition.
We simulate the \chgthree{Erlang ODE approximation}~\eqref{eq:lct:approximate:system:ODE} using either \matlab{}'s \odeff{} or \odeofs, and depending on the order\chgthree{s} of the Erlang mixture \chgthree{approximations}, we implement the matrices $A$, $B$, and $C$ as either dense or sparse. Furthermore, we provide the analytical Jacobian of the right-hand side function of the \chgthree{Erlang ODE approximation}~\eqref{eq:lct:approximate:system:ODE} to the simulator.
We implement the numerical method for non-stiff DDEs described in Appendix~\ref{sec:numerical:simulation:non:stiff} in both \matlab{} and C (with a \texttt{MEX} interface). The numerical method from Appendix~\ref{sec:numerical:simulation:stiff} is implemented in \matlab{}, we use \fsolve{} to solve the residual equations, and we provide the analytical Jacobian.  In all three implementations, the right-hand side function, $f$, the delay function, $h$, and the kernel, $\alpha$, are evaluated in \matlab{}. Finally, in Section~\ref{sec:ex}, we carry out parallel simulations and kernel approximations using a high-performance computing cluster~\cite{DTU:DCC:Resource}.
	\section{Numerical test of the accuracy}\label{sec:ex:conv}%
In this section, we construct a DDE in the form~\eqref{eq:system:x}--\eqref{eq:system:delay} with a known solution and use it to investigate the accuracy of the Erlang mixture approximation and the \chgthree{Erlang ODE approximation}. Furthermore, we assess the convergence rate of the two numerical methods for solving \chgtwo{IVP}s in the form~\eqref{eq:system}--\eqref{eq:system:delay} described in Appendix~\ref{sec:numerical:simulation}.
\chg{Finally, we compare the accuracy of the Erlang mixture approximation with that of one of the exponential mixture approximations described in~\cite{Guglielmi:Hairer:2025} for a system of DDEs involving a gamma kernel, which describes chemotherapy-induced myelosuppression~\cite{Krzyzanski:2019}.}

\subsection{\chg{Modified logistic equation}}\label{sec:ex:conv:logistic:equation}
We consider a modified logistic differential equation with a forcing term in the form
\begin{align}\label{eq:logistic:differential:equation}
	\dot x(t) &= \sigma x(t) \left(1 - \frac{z(t)}{\kappa}\right) + Q(t), &
	z(t) &= \int\limits_{-\infty}^{t} \alpha(t - s) x(s)\,\mathrm ds.
\end{align}
Here, $x: \R \rightarrow \Rnn$ is a population density, $z: \R \rightarrow \Rnn$ is the memory state, $\sigma \in \Rp$ is the logistic growth rate, $\kappa \in \Rp$ is a carrying capacity, and $Q: \R \rightarrow \R$ is the forcing term.
We choose the solution, $x^*: \R \rightarrow \Rnn$, and the kernel, $\alpha: \Rnn \rightarrow \Rnn$, as
\begin{align}\label{eq:verification:true:solution}
	x^*(t) &= 1 + e^{-\left(\frac{t}{\gamma}\right)^2}, &
	\alpha(t) &= \frac{2}{\sqrt{\pi}} e^{-t^2}, &
	\beta (t) &= \erf(t),
\end{align}
where $\gamma \in \Rp$ is a dilation parameter, $\beta: \Rnn \rightarrow [0, 1]$ is derived from~\eqref{eq:integrals}, and $\erf: \R \rightarrow [0, 1]$ is the error function.
Consequently, the true memory state, $z^*: \R \rightarrow \Rnn$, is
\begin{align}
	z^*(t) &= 1 + \frac{\gamma}{\sqrt{\gamma^2 + 1}} e^{-\frac{t^2}{\gamma^2 + 1}} \left(1 + \erf\left(\frac{t}{\gamma \sqrt{\gamma^2 + 1}}\right)\right),
\end{align}
and the forcing term is chosen such that $x^*$ in~\eqref{eq:verification:true:solution} is the solution:
\begin{align}
	Q(t)
	&= \dot x^*(t) - \sigma x^*(t) \left(1 - \frac{z^*(t)}{\kappa}\right).
\end{align}
This is referred to as the \emph{method of manufactured solutions}~\cite{Roache:2002}.
\begin{remark}
	We choose $x^*$ such that it is bounded away from zero because the system becomes unstable if the numerical approximation of the solution becomes negative. This can happen when the true solution is close to zero, in which case the numerical approximation may diverge.
\end{remark}

Figure~\ref{fig:verification} shows the approximation errors for the state and the kernel:
\begin{align}\label{eq:verification:error}
	E_x &= \chmtwo{\left(}\sum_{n=0}^{\chg{N}_x-1} (\hat x(t_{n+1}) - x^*(t_{n+1}))^2 \Delta t_{\chmtwo{x}}\chmtwo{\right)^{\frac{1}{2}}}, &
	E_\alpha &= \chmtwo{\left(}\sum_{k=0}^{\chg{N}_\alpha-1} (\hat \alpha(t_k) - \alpha(t_k))^2 \Delta t_{\chmtwo{\alpha}}\chmtwo{\right)^{\frac{1}{2}}}.
\end{align}
Here, $\hat x$ and $\hat \alpha$ denote the approximations\chgtwo{, and $\Delta t_x = (t_f - t_0)/N_x$ and $\Delta t_\alpha = t_h/N_\alpha$}. Note that the kernel error, $E_\alpha$, is analogous to the \chgtwo{square root of the first term in the} objective function in~\eqref{eq:algo:nlp:obj} but may contain a different number of terms. We have used the model parameter values $\sigma = 4$~mo$^{-1}$ and $\kappa = 1$ and the dilation parameter $\gamma = 10$~mo, and the initial and final times are $t_0 = 0$~mo and $t_f = 24$~mo. When we determine the Erlang mixture approximation of a given order, $M$, we use $N = 10^{\chg{3}}$ points \chgthree{and a weight of $w = 5\cdot 10^{-11}$} in the objective function in~\eqref{eq:algo:nlp:obj}. Furthermore, we use a threshold of $\epsilon = 10^{-14}$ and a convergence tolerance of $10^{-15}$ to determine the approximation interval length, $t_h$ \chgtwo{(see Table~\ref{tab:approximation:parameters})}. We also use the identified value of $t_h$ in the reference approach described in Section~\ref{sec:algo:theoretical:approximation}.
We use \matlab{}'s \odeff{} with absolute and relative tolerances of $10^{-12}$ to simulate the approximate system of ODEs~\eqref{eq:lct:approximate:system:ODE}, and we use $\chg{N}_x = 24\cdot 10^3$ quadrature points to evaluate the state error in~\eqref{eq:verification:error}, which corresponds to a time step size of $\Delta t_x = 10^{-3}$~mo. Furthermore, we use $\chg{N}_\alpha = 10^6$ quadrature points to evaluate the kernel error.
For the results obtained with the numerical methods described in Appendix~\ref{sec:numerical:simulation}, we use a memory horizon of $\Delta t_h = t_f - t_0 = 24$~mo and the above time step size, and for the method for stiff DDEs, we use a function tolerance of $10^{-12}$ and an optimality tolerance of $10^{-8}$ when we solve the involved residual equations with \matlab{}'s \fsolve{}.

The results demonstrate that for this example, the Erlang mixture approximation obtained with the proposed least-squares approach is several orders of magnitude more accurate than the reference approach based on the theoretical expressions for the coefficients. In both cases, there is a high correlation between the error of the kernel and the error of the approximate state trajectory. \chg{For the least-squares approach, the convergence rates are faster than polynomial, and for the reference approach, the convergence rates are \chgtwo{linear} for sufficiently large values of $M$ (see also Figure~\ref{fig:verification:extended}).} The kernel error is higher for $M$ \chg{equal to 128} than \chg{for \chgtwo{$64$}} because the objective function in the \chgtwo{Q}P~\eqref{eq:algo:nlp} uses a coarser resolution in the rectangle rule than the error in~\eqref{eq:verification:error}. Finally, the state error decreases \chgtwo{linearly} with the time step size for the two numerical methods described in Appendix~\ref{sec:numerical:simulation} because the involved integral \chg{and differential equation} have been discretized with first-order methods.
\begin{figure*}
	\subfloat{
		\includegraphics[width=0.48\textwidth]{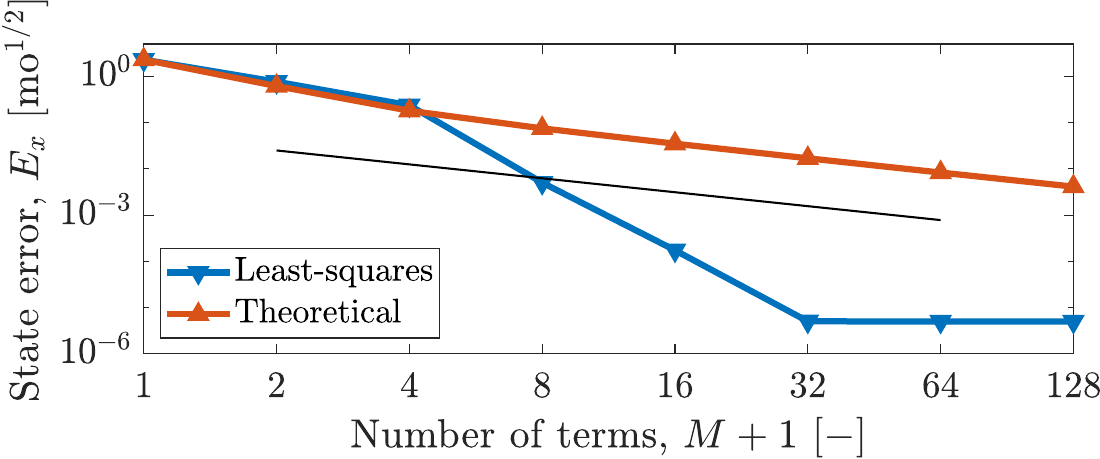}~
		\includegraphics[width=0.48\textwidth]{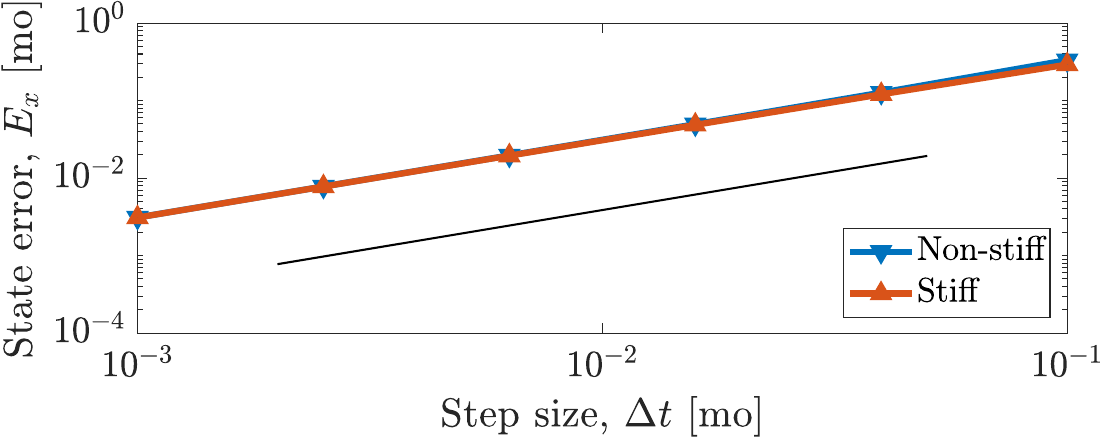}
	} \\
	\subfloat{
		\includegraphics[width=0.48\textwidth]{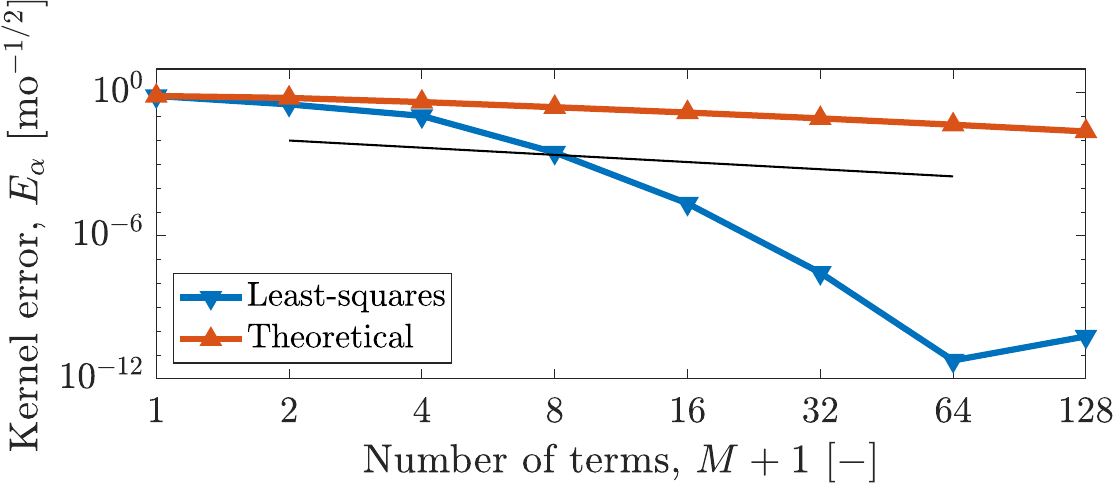}~
		\includegraphics[width=0.48\textwidth]{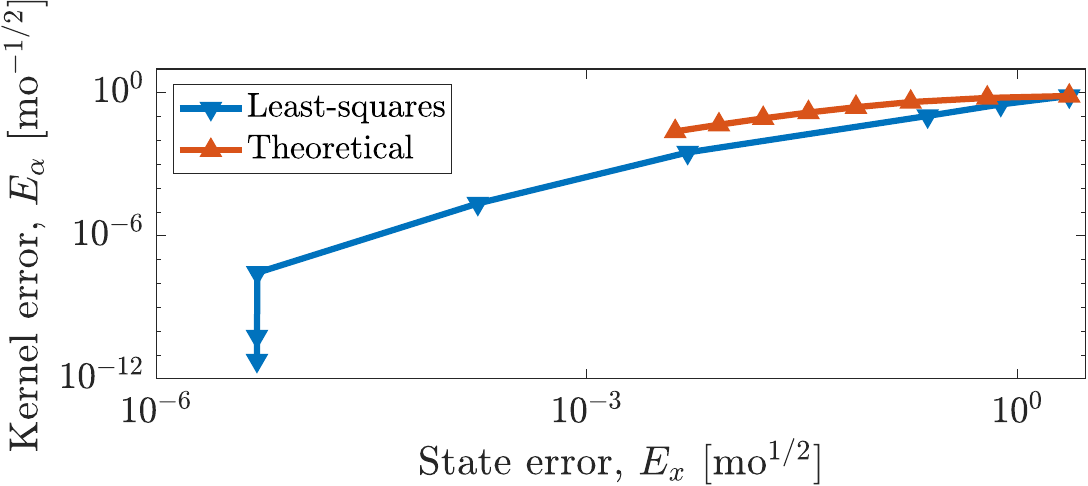}
	}
	\caption{\chg{Convergence analysis for the modified logistic equation.} The state (top left) and kernel (bottom left) errors for Erlang mixture approximations of different orders, $M$, obtained with the proposed least-squares approach and the reference approach based on the theoretical expressions for the coefficients. The bottom right \chg{plot} shows the state and kernel errors against each other, and the top right plot shows the state error obtained with the numerical approaches for non-stiff and stiff DDEs for different time step sizes, $\Delta t$. The black solid line\chg{s in the left column are proportional to $1/(M+1)$, and the one in the top right} is proportional to $\Delta t$.}
	\label{fig:verification}
\end{figure*}
\chgtwo{
\begin{table}[tb]
	\centering
	\caption{The Erlang mixture approximation order, $M$, the approximation horizon, $t_h$, identified with the bisection method from Section~\ref{sec:algo:domain}, and the value of the rate parameter, $a$, obtained with~\eqref{eq:algo:rate:parameter} for the problems considered in Section~\ref{sec:ex:conv} (the first two) and Section~\ref{sec:ex}. For the first two problems, $t_h$ is fixed, but $M$ varies. Therefore, $a$ varies as well, and this is marked with a v. The third and fourth problems are the bifurcation analyses with respect to the logistic growth rate, $\sigma$, and the kernel parameter, $\mu_2$, respectively, and in the latter case, the shown values correspond to the six values of $\mu_2$ considered in Figure~\ref{fig:logistic:equation:bifurcation:analysis} and~\ref{fig:logistic:equation:identified:kernels}. The fifth problem involves the six kernels shown in Figure~\ref{fig:nuclear:fission:identified:kernels}.}
	\label{tab:approximation:parameters}
	\footnotesize
	\begin{tabular}{lrr@{~}lr@{~}l}
		\toprule
		Problem & \multicolumn{1}{c}{$M$} & \multicolumn{2}{c}{$t_h$} & \multicolumn{2}{c}{$a$} \\
		\midrule
		Modified logistic equation (forced) & \multicolumn{1}{c}{v} & 4.32 & mo & \multicolumn{2}{c}{v} \\
		Myelosuppression & \multicolumn{1}{c}{v} & 489.0\chgthree{7} & h & \multicolumn{2}{c}{v} \\
		\midrule
		Modified logistic equation ($\sigma$) & 100 & 1.36 & mo & 74.41 & mo$^{-1}$ \\
		\midrule
		\multirow{6}{*}{Modified logistic equation ($\mu_2$)} & \multirow{6}{*}{350} & 0.92 & mo & 380.75 & mo$^{-1}$ \\
		&& 1.22 & mo & 287.08 & mo$^{-1}$ \\
		&& 1.45 & mo & 242.85 & mo$^{-1}$ \\
		&& 1.62 & mo & 217.31 & mo$^{-1}$ \\
		&& 1.91 & mo & 184.13 & mo$^{-1}$ \\
		&& 2.32 & mo & 151.27 & mo$^{-1}$ \\
		\midrule
		\multirow{6}{*}{Nuclear fission} & \multirow{6}{*}{500} & 12.13 & s & 41.32 & s$^{-1}$ \\
		&& 12.10 & s & 41.43 & s$^{-1}$ \\
		&& 11.97 & s & 41.86 & s$^{-1}$ \\
		&& 11.69 & s & 42.87 & s$^{-1}$ \\
		&& 10.53 & s & 47.57 & s$^{-1}$ \\
		&&  8.28 & s & 60.50 & s$^{-1}$ \\
		\bottomrule
	\end{tabular}
\end{table}
}

\subsection{Myelosuppression}\label{sec:ex:conv:myelosuppression}
\chg{
In this section, we compare the accuracy of the Erlang mixture approximation proposed in this work with an exponential mixture approximation (see Section~\ref{sec:gamma:approximation}) used by Guglielmi and Hairer~\cite{Guglielmi:Hairer:2025} for a system of DDEs describing myelosuppression. The system describes the evolution of precursor cells, $y: \R \rightarrow \Rnn$, which ultimately form granulocytes, $w: \R \rightarrow \Rnn$:
\begin{subequations}\label{eq:myelosuppression}
	\begin{align}
		\label{eq:myelosuppression:y}
		\dot y(t) &= \left(\kappa \left(\frac{w_0}{w(t)}\right)^\gamma - k_s C(t) - \kappa\right) y(t), \\
		\label{eq:myelosuppression:w}
		\dot w(t) &= -\kappa w(t) + \kappa \int_{-\infty}^t \alpha(t - s) y(s) \incr s, \\
		\label{eq:myelosuppression:A}
		\dot C(t) &= -\frac{V_{\max} C(t)}{K_m + C(t)}.
	\end{align}
\end{subequations}
The purpose of the model is to study the effect of a bolus injection of a drug (fluoracil) and $C: \R \rightarrow \Rnn$ is the concentration of the drug in the plasma, which is described by the Michaelis-Menten model in~\eqref{eq:myelosuppression:A}. The constants are $\kappa = (1 - \lambda)/55.6$~h$^{-1}$, $\lambda = -0.46$, $w_0 = 14.4 \cdot 10^9$~cells/L, $\gamma= 0.507$, $k_s = 0.0213$~L/(mg~h), $V_{\max} = 100$~mg/(L~h), and $K_m = 22$~mg/L, and the gamma kernel is given by
\begin{align}\label{eq:gamma:kernel}
	\alpha(t) &= \frac{\kappa^{1 - \lambda}}{\Gamma(1 - \lambda)} t^{-\lambda} e^{-\kappa t},
\end{align}
where $\Gamma: \C \rightarrow \C$ is the gamma function. The initial and final times are $t_0 = 0$~h and $t_f = 700$~h, and the initial states are $y(t) = w_0$ for $t \in (-\infty, t_0]$, $w(t_0) = w_0$, and $C(t_0) = A_0/V$ where $A_0 = 127$~mg/kg and $V = 1.03$~L/kg.
We use a threshold of $\epsilon = 10^{-\chmthree{5}}$ in the computation of $t_h$ \chgtwo{(see Table~\ref{tab:approximation:parameters})}, and we use $N = 10^4$ points \chgthree{and a weight of $5\cdot 10^{-10}$} in the objective function. As the system is stiff, we use \odeofs{} to simulate the \chgthree{Erlang ODE approximation}, and we compare to the \chgthree{kernel} error of the exponential mixture approximation described in~\cite{Guglielmi:Hairer:2025}, which depends on the tolerance $\epsilon_r \in \Rp$ that also indirectly determines the number of terms in the approximation. Lower tolerances lead to more terms, and we consider $\epsilon_r \in \{10^{-1}, 10^{-3}, 10^{-5}, 10^{-7}, 10^{-9}, 10^{-11}\}$. Furthermore, we compare the state obtained with the Erlang \chgthree{ODE approximation} to that obtained with $\epsilon_r = 10^{-11}$ using $\chg{N}_x = 700\cdot 10^3$ points in time, corresponding to a time step size of $\Delta t_x = 10^{-3}$~h. The remaining parameters are the same as in Section~\ref{sec:ex:conv:logistic:equation}.
}

\chg{
The results are shown in Figure~\ref{fig:gamma:approximation}, and as for the example in Section~\ref{sec:ex:conv:logistic:equation}, the least-squares approach described in Section~\ref{sec:algo} is significantly more accurate than the reference approach based on the theoretical expressions for the coefficients. However, the error is generally higher than for the modified logistic equation (see Figure~\ref{fig:verification}), the convergence rates are no longer faster than polynomial for the least-squares approach, and they are no longer \chgtwo{linear} for the reference approach based on the theoretical expressions for the coefficients. The figure also shows that Guglielmi and Hairer's approach \chgtwo{does have converge rates that are faster than polynomial} (note the difference in the vertical axes). The error of the Erlang mixture approximation is primarily high for $t$ close to zero. The reason for this may be that the derivative of the true kernel has a singularity at zero whereas for finite values of $M$ and $a$, the derivative of the Erlang mixture approximation does not. This may also describe why the convergence rates of the Erlang mixture approximation are reduced. However, we emphasize that the Erlang mixture approximation can be used for a broader class of kernels (bounded and continuous) than the \chgthree{specific} exponential mixture approximations \chgthree{considered by Guglielmi and Hairer}. \chgtwo{Furthermore, the state error is generally high because of the order of magnitude of the state variables ($10$--$30$ for $y$ and $w$) and the length of the simulation interval. A constant error of 1 would correspond to a state error of $E_x = \sqrt{700} \approx 26.5$. Even for $M+1 = 4$, the state error is below this value.}
}

\begin{figure*}
	\subfloat{
		\includegraphics[width=0.48\textwidth]{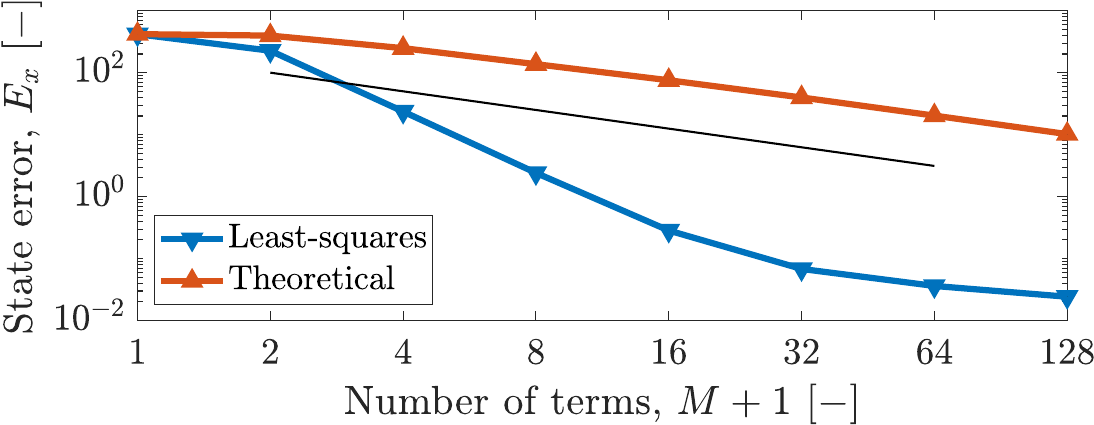}~
		\includegraphics[width=0.48\textwidth]{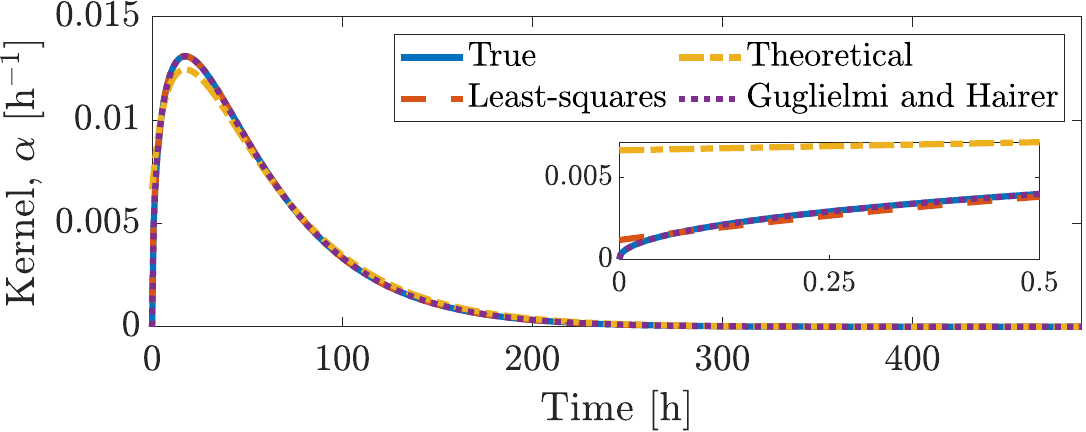}
	} \\
	\subfloat{
		\includegraphics[width=0.48\textwidth]{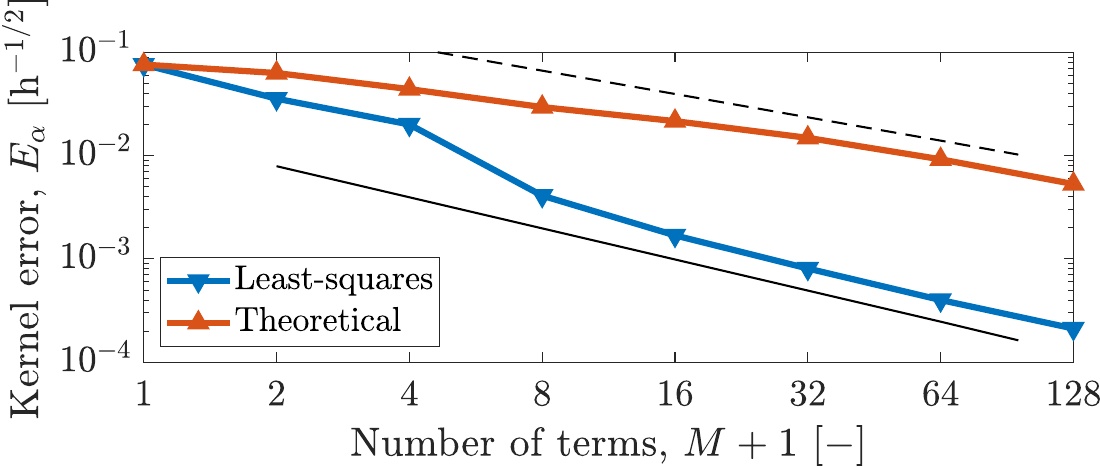}~
		\includegraphics[width=0.48\textwidth]{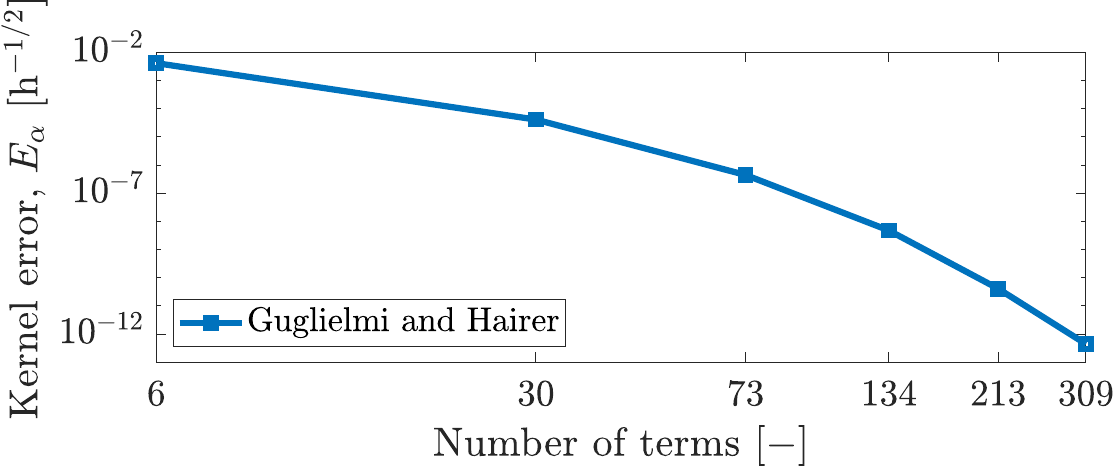}
	}
	\caption{\chg{Convergence analysis for the myelosuppression model. The \chgtwo{maximum} state \chgtwo{error} (top left) and \chgtwo{the} kernel \chgtwo{error} (bottom left) for Erlang mixture approximations of different orders, $M$, obtained with the least-squares approach and the reference approach based on theoretical values of the coefficients. The kernel error is shown for Guglielmi and Hairer's approach in the bottom right, and the true and approximate kernels are shown in the top right for $M = 128$ and $\epsilon_r = 10^{-11}$ (corresponding to 309 terms). The black solid line\chgthree{s} in the left \chgthree{column are} proportional to $1/(M+1)$ whereas the black dashed line in the bottom left is proportional to $1/(M+1)^{3/\chmtwo{4}}$.}}
	\label{fig:gamma:approximation}
\end{figure*}
	\section{Numerical examples}\label{sec:ex}%
In this section, we present two numerical examples that demonstrate how Erlang \chgthree{ODE} approximations can be used to approximately analyze and simulate DDEs with distributed time delays. In the first example, we use the \chgthree{Erlang ODE approximation} to perform a numerical bifurcation analysis of the modified logistic differential equation that was also considered in Section~\ref{sec:ex:conv}, and in the second example, we present a Monte Carlo simulation of a molten salt nuclear fission process, which is nonlinear, multivariate, and stiff. We use high orders, $M$, and strict tolerances in the numerical methods to mitigate any adverse effects on the results. However, in many practical contexts, lower orders and less restrictive tolerances can be used with limited effect on the accuracy (see also Section~\ref{sec:ex:conv}).

\subsection{Modified logistic equation}\label{sec:ex:logistic}
We consider the modified logistic differential equation~\eqref{eq:logistic:differential:equation} where the forcing term is $Q(t) = 0$ and the kernel is
\begin{align}\label{eq:logistic:equation:kernel}
	\alpha(t) &= \gamma_1 F(t; \mu_1, \sigma_1) + \gamma_2 F(t; \mu_2, \sigma_2).
\end{align}
Here, $\gamma_1, \gamma_2 \in [0, 1]$ are weights, $\mu_1, \mu_2 \in \R$ are location parameters, $\sigma_1, \sigma_2 \in \Rp$ are scale parameters, and $F: \Rnn \times \R \times \Rp \rightarrow \Rnn$ is given by the probability density function of a folded normal distribution (i.e., it is a folded normal kernel):
\begin{align}\label{eq:folded:normal:pdf}
	F(t; \mu_i, \sigma_i) &= \frac{\exp\left(-\frac{1}{2}\left(\frac{t - \mu_i}{\sigma_i}\right)^2\right) + \exp\left(-\frac{1}{2}\left(\frac{t + \mu_i}{\sigma_i}\right)^2\right)}{\sqrt{2 \pi}\, \sigma_i}, & i &= 1, 2.
\end{align}
Furthermore, we derive $\beta$ in \chgtwo{Definition~\ref{def:integrals}} analytically.

We use the parameter values in Table~\ref{tab:logistic:parameters}, and the initial state is $x(t) = 0.9$ for $t \leq t_0$. We use the same parameters as in Section~\ref{sec:ex:conv:logistic:equation} to identify Erlang mixture kernels of order $M = 100$ in the bifurcation analysis with respect to the logistic growth rate, $\sigma$, and $M = 350$ in the analysis with respect to the kernel parameter $\mu_2$. \chgtwo{Table~\ref{tab:approximation:parameters} shows the resulting values of $t_h$ and $a$.} In Figure~\ref{fig:logistic:equation:bifurcation:analysis}, the first row shows the eigenvalues of the Jacobian of the right-hand side function in the \chgthree{Erlang ODE approximation}. For completeness, we consider a large range of values for $\sigma$, but for numerical reasons, we have not been able to investigate larger values of $\mu_2$. The second row shows the largest real part of the eigenvalues and when the parameters become sufficiently large, the steady state $\bar x = \kappa$ becomes unstable. The two bottom rows show simulations obtained with the numerical method for non-stiff DDEs for the parameter values indicated in the second row (the colors match). We use a memory horizon of $\Delta t_h = t_f - t_0 = 24$~mo and $10,000$ time steps. In both cases, the stability analysis based on the \chgthree{Erlang ODE approximation} accurately predicts when the steady state is stable, marginally stable (where the largest real part of the eigenvalues is equal to zero), and unstable. For large values of $\sigma$, a limit cycle appears whereas for large values of $\mu_2$, the population density oscillates unstably.
Figure~\ref{fig:logistic:equation:identified:kernels} shows the identified kernels, the corresponding error, and the coefficients for different values of $\mu_2$. \chgtwo{In the left column, the coefficients exhibit oscillation, whereas in the right column, t}here is a similarity between the coefficients and the shape of the approximated kernel, e.g., in terms of bimodality. \chgtwo{The oscillations in the coefficients disappear for larger values of $M$, which also leads to larger values of $a$ (results not shown).}
\begin{table}[t]
	\centering
	\caption{Values of the parameters in the modified logistic equation. For $\sigma$ and $\mu_2$, only the nominal values are shown.}
	\label{tab:logistic:parameters}
	\footnotesize
	\begin{tabular}{cccccccccc}
		\toprule
		\multicolumn{10}{c}{Model, kernel, and simulation parameters} \\
		\midrule
		$\sigma$~[mo\chgthree{$^{-1}$}] & $\kappa$~[--] & $\gamma_1$~[--] & $\gamma_2$~[--] & $\mu_1$~[mo] & $\mu_2$~[mo] & $\sigma_1$~[mo] & $\sigma_2$~[mo] & $t_0$ [mo] & $t_f$ [mo] \\
		4 & 1 & 0.5 & 0.5 & 0.35 & 0.45 & 0.06 & 0.12 & 0 & 24 \\
		\bottomrule
	\end{tabular}
\end{table}
\begin{figure}
	\centering
	\subfloat{
		\includegraphics[width=0.48\textwidth]{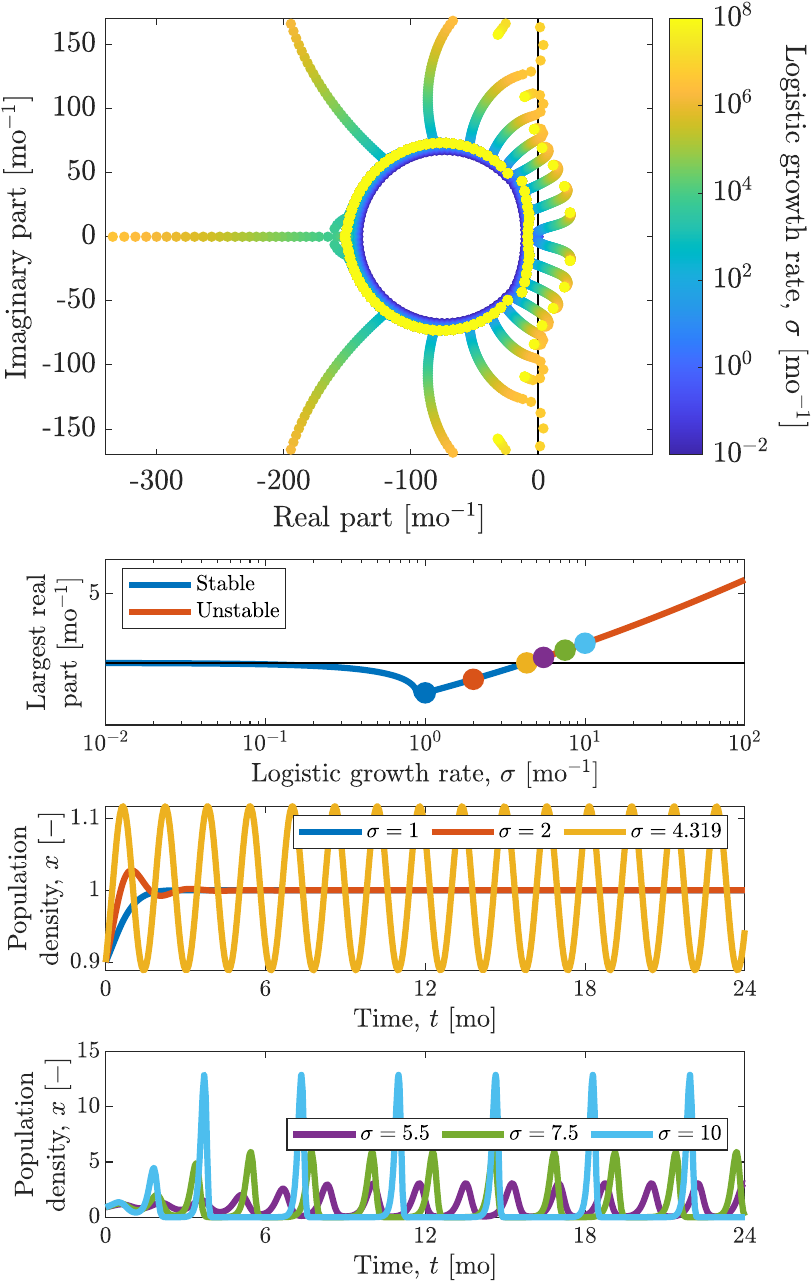}~
		\includegraphics[width=0.48\textwidth]{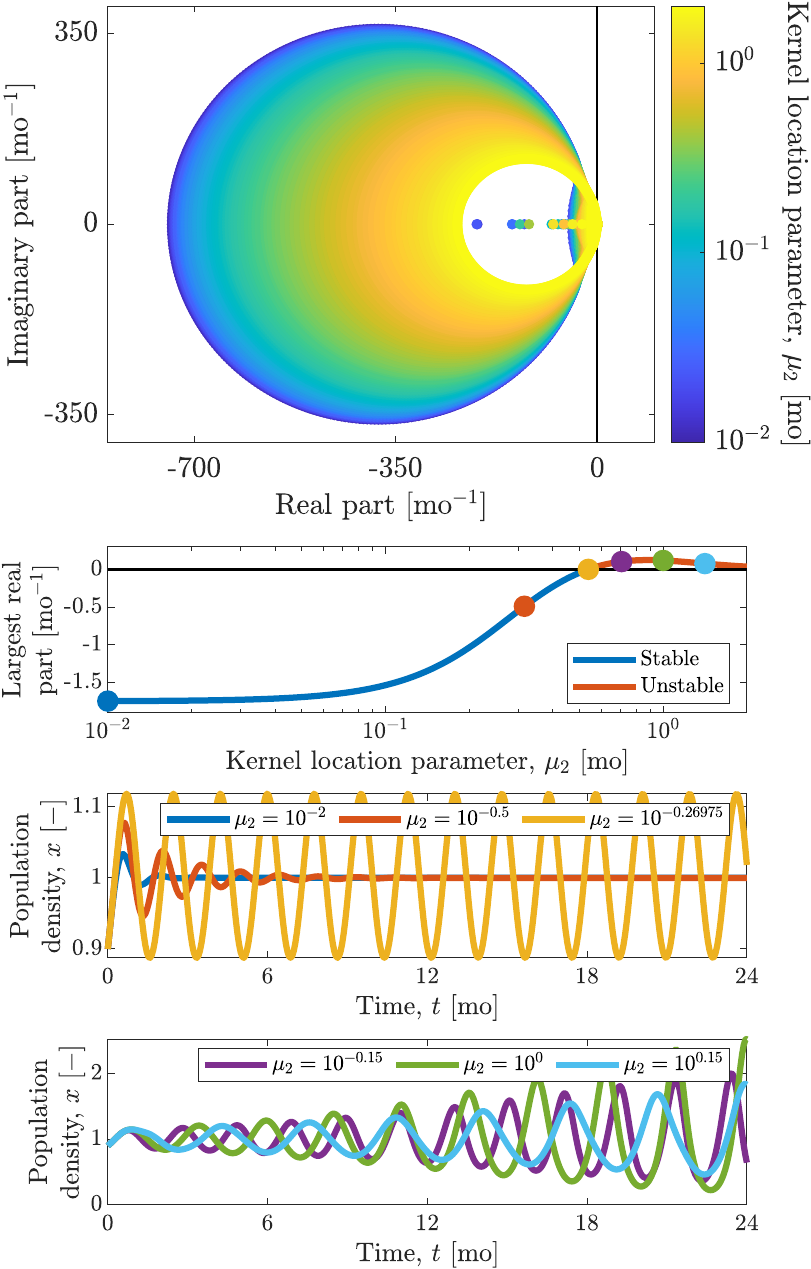}
	}
	\caption{Bifurcation analysis with respect to the model parameter $\sigma$ (left column) and the kernel parameter $\mu_2$ (right column) for the modified logistic equation. First row: Eigenvalues. Second row: The largest real part of the eigenvalues. Third and bottom row: Simulations for selected parameter values (obtained with the numerical method described in Appendix~\ref{sec:numerical:simulation:non:stiff}).}
	\label{fig:logistic:equation:bifurcation:analysis}
\end{figure}
\begin{figure}
	\centering
	\subfloat{
		\includegraphics[width=0.485\textwidth]{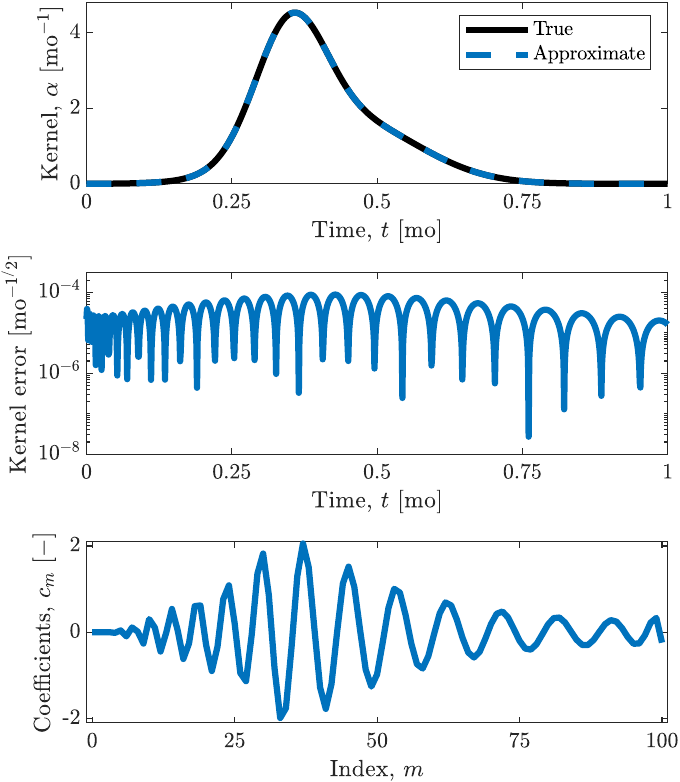}~
		\includegraphics[width=0.48\textwidth]{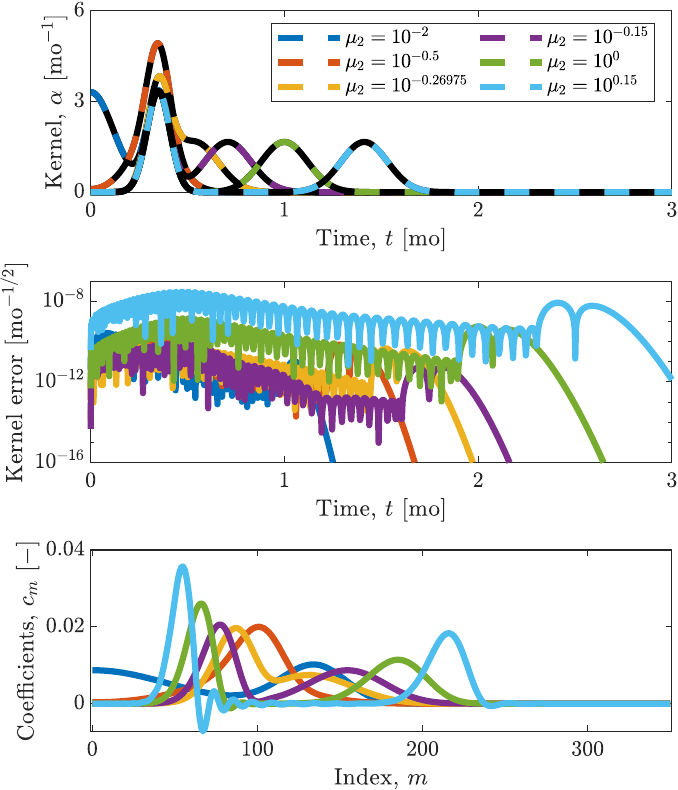}
	}
	\caption{Top row: The true kernels and the corresponding Erlang mixture approximations for $M = 100$ (left) and $M = 350$ (right) obtained in connection with the bifurcation analysis shown in Figure~\ref{fig:logistic:equation:bifurcation:analysis}. Middle row: The absolute errors of the kernel approximations. Bottom row: The coefficients in the Erlang mixture approximations. The colors are consistent across each column\chgtwo{, and the coefficients are visualized as solid lines to ease the interpretation of the results}.}
	\label{fig:logistic:equation:identified:kernels}
\end{figure}

\subsection{Nuclear fission}\label{sec:ex:nuclear:fission}
Next, we consider a point reactor kinetics model of a molten salt nuclear fission reactor~\cite{Duderstadt:Hamilton:1976, Wooten:Powers:2018} where the molten salt is circulated through a heat exchanger outside of the reactor core. The model describes 1)~the concentrations of $N_g = 6 \in \N$ neutron precursor groups, $C_i: \R \rightarrow \Rnn$ for $i = 1, \ldots, N_g$, which emit delayed neutrons, 2)~the concentration of neutrons, $C_n: \R \rightarrow \Rnn$ for $n = N_g + 1 \in \N$, and 3)~the reactivity, $\rho: \R \rightarrow \R$, which represents the relative number of neutrons created per fission event:
\begin{align}
	\dot C_i(t) &= (C_{i, in}(t) - C_i(t)) D + R_i(t), &
	\dot C_n(t) &= R_n(t), &
	\dot \rho(t) &= -\kappa H C_n(t).
\end{align}
The dilution rate, $D \in \Rnn$, is the ratio between the volumetric inlet and outlet flow rate and the reactor core volume, $\kappa \in \Rnn$ is a proportionality constant, and $H \in \Rnn$ is the ratio between the power production proportionality constant and the heat capacity of the reactor core. Furthermore, the production rate $R: \R \rightarrow \R^n$ is defined in terms of the sto\chg{i}chiometric matrix $S \chmthree{: \R \rightarrow} \R^{N_r \times n}$ where $N_r = n \in \N$ is the number of reactions and $r: \R \rightarrow \Rnn^{N_r}$ is a vector of reaction rates:
\begin{align}
	R(t) &= S^T(t) r(t), &
	S(t) &=
	\begin{bmatrix}
		-1      &         &             & 1               \\
		&  \ddots &             & \vdots          \\
		&         & -1          & 1               \\
		\beta_1 & \cdots  & \beta_{N_g} & \rho(t) - \beta
	\end{bmatrix}, &
	r(t) &=
	\begin{bmatrix}
		\lambda_1 C_1(t) \\ \vdots \\ \lambda_{N_g} C_{N_g}(t) \\ C_n(t)/\Lambda
	\end{bmatrix}.
\end{align}
Here, $\lambda_i, \beta_i, \Lambda \in \Rnn$ for $i = 1, \ldots, N_g$ are decay constants, delayed neutron fractions (i.e., the relative number of neutrons that are generated from the decay of neutron precursors), and the mean neutron generation time, respectively, and $\beta \in \Rnn$ is the sum of $\beta_i$ for $i = 1, \ldots, N_g$.
The inlet concentration, $C_{i, in}: \R \rightarrow \Rnn$, is the memory state given by
\begin{align}\label{eq:nuclear:fission:delay}
	C_{i, in}(t) &= \int\limits_{-\infty}^t \alpha_i(t - s) C_i(s)\,\mathrm ds, & i &= 1, \ldots, N_g.
\end{align}
The kernels are described in Appendix~\ref{sec:nuclear:fission:kernel} and represent 1)~a nonuniform velocity profile in the external circulation of the molten salt (and neutron precursors) and 2)~the (exponential) decay of the neutron precursors during the external circulation.

Table~\ref{tab:nuclear:fission:parameters} shows the parameter values, which (except for $D$, $\mu_1$, and $\sigma_1$) are taken from~\cite{Leite:etal:2016}.
\begin{table}[t]
	\centering
	\caption{Values of the parameters in the molten salt reactor model. For $\kappa$, only the nominal value is shown.}
	\label{tab:nuclear:fission:parameters}
	\footnotesize
	\begin{tabular}{cccccc}
		\toprule
		\multicolumn{6}{c}{Decay constants~[1/s]} \\
		\midrule
		$\lambda_1$ & $\lambda_2$ & $\lambda_3$ & $\lambda_4$ & $\lambda_5$ & $\lambda_6$ \\
		0.0124 & 0.0305 & 0.1110 & 0.3010 & 1.1300 & 3.0000 \\
		\midrule
		\multicolumn{6}{c}{Delayed neutron fractions~[--]} \\
		\midrule
		$\beta_1$ & $\beta_2$ & $\beta_3$ & $\beta_4$ & $\beta_5$ & $\beta_6$ \\
		0.00021 & 0.00141 & 0.00127 & 0.00255 & 0.00074 & 0.00027 \\
		\midrule
		\multicolumn{6}{c}{Other model parameters} \\
		\midrule
		$\Lambda$~[s] & $\kappa$~[1/K] & $H$~[K\,cm$^3$/s] & $D$~[1/s] & $\mu_1$~[s] & $\sigma_1$~[s]\\
		$5\cdot 10^{-5}$ & $3\cdot 10^{-4}$ & 0.05 & 2 & 2 & 0.1 \\
		\bottomrule
	\end{tabular}
\end{table}
We use $N = 1000$ points \chgthree{and a weight of $w = 10^{-10}$} to identify Erlang mixture kernel approximations of order $M = \chg{500}$. Furthermore, we use $\epsilon = 10^{-13}$ and a tolerance of $10^{-14}$ in the bisection algorithm \chgtwo{(see Table~\ref{tab:approximation:parameters})}, \chgthree{and} we use Matlab's \integral{} with absolute and relative tolerances of $10^{-15}$ to approximate \chgtwo{$\beta$} in~\eqref{eq:integrals}.
Figure~\ref{fig:nuclear:fission:monte:carlo:simulation:parameters} shows a Monte Carlo simulation for 1000 samples of $\kappa$ drawn from a normal distribution with mean $3\cdot 10^{-4}$ and standard deviation $7.5 \cdot 10^{-5}$. For brevity of the presentation, we omit the plots of the neutron precursor group concentrations. The initial states are $C_i(t) = 1$~kmol/cm$^3$ for $i = 1, \ldots, n$ and $\rho(t) = 1.1 \beta$ for $t \leq t_0$, and we use an absolute and relative tolerance of $10^{-8}$ in \odeofs{} to simulate the \chgthree{Erlang ODE approximation}~\eqref{eq:lct:approximate:system:ODE}. Furthermore, we supply the analytical Jacobian of the right-hand side function, and we represent it as a sparse matrix.
The figure also shows the relative difference between the simulations obtained 1)~as described above and 2)~with the numerical method for stiff DDEs described in Appendix~\ref{sec:numerical:simulation:stiff} for the mean value of $\kappa$. The latter method uses a time step size of $6.25\cdot 10^{-5}$~s, and the relative difference, $E_{r, i}: \R \rightarrow \Rnn$, in the $i$'th state variable and $n$'th time step is given by
\begin{align}\label{eq:relative:error}
	E_{r, i}(t_n) &= \frac{|\hat x_i(t_n) - x_i(t_n)|}{1 + |x_i(t_n)|}, & i &= 1, \ldots, n_x,
\end{align}
where $\hat x_i$ is the approximate solution to the \chgthree{Erlang ODE approximation} and $x_i$ is obtained with the numerical method for stiff DDEs.
For each point in time, the 95\% confidence intervals show the intervals from the 2.5 to the 97.5 percentiles and the span shows the interval of the minimum and maximum value of the states. For the neutron concentration, the confidence interval is almost symmetric (on the logarithmic axis) whereas the maximum is significantly higher above the mean than the minimum is below it. For the reactivity, the confidence interval and the span are hardly visible, and the simulation for the mean value of $\kappa$ is indistinguishable from the pointwise mean.
The maximum relative difference for all eight state variables is $2.14\cdot 10^{-3}$. There are three main sources of error that affect this difference: 1)~The kernel approximation error, 2)~the error from using \odeofs{} to simulate the approximate set of ODEs, and 3)~the error of the numerical method described in Appendix~\ref{sec:numerical:simulation:stiff}. If we use twice as large time steps in the latter, the maximum relative error is $4.28\cdot 10^{-3}$, i.e., twice as large (results not shown). Therefore, we believe that the maximum relative difference would be further reduced by using smaller time step sizes, i.e., that the main source of error is that of the numerical method for stiff DDEs and not the Erlang \chgthree{ODE} approximation. However, due to large memory requirements, it has not been possible to further reduce the time step size.
Finally, Figure~\ref{fig:nuclear:fission:identified:kernels} shows the Erlang mixture approximations of the kernels and the corresponding errors. It also shows the sum of folded normal kernels in~\eqref{eq:nuclear:fission:kernels} for the values of $\mu_1$ and $\sigma_1$ shown in Table~\ref{tab:nuclear:fission:parameters} and the coefficients in the Erlang mixture approximations. \chgtwo{The coefficients exhibit oscillations, which was also the case for the coefficients in the left column of Figure~\ref{fig:logistic:equation:identified:kernels}.}

\begin{figure}
	\centering
	\subfloat{
		\includegraphics[width=0.48\textwidth]{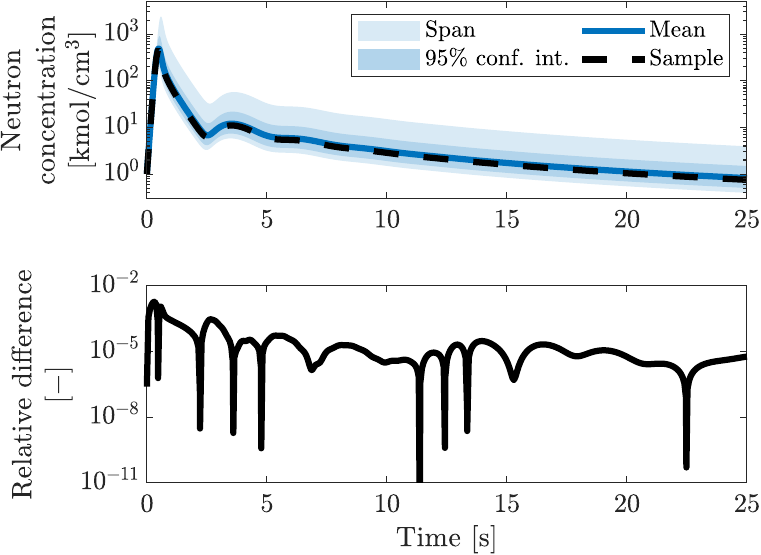}~
		\includegraphics[width=0.48\textwidth]{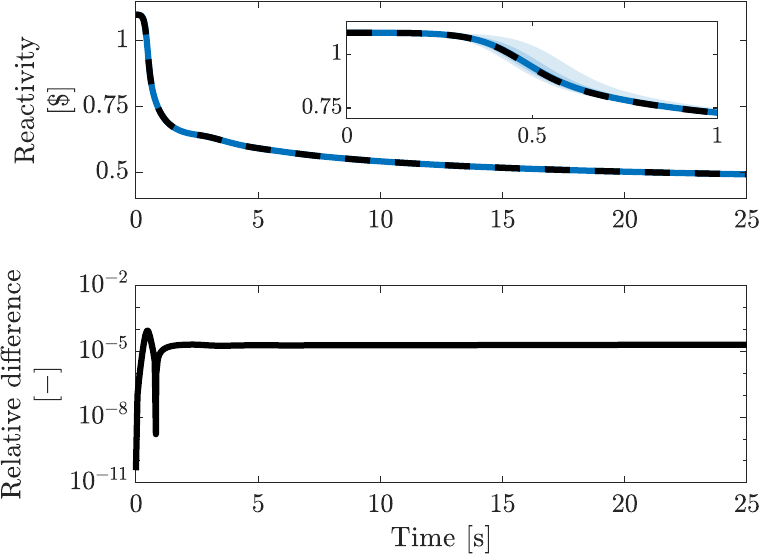}
	}
	\caption{Monte Carlo simulation of the molten salt reactor model. Top row: The neutron concentration (left) and the reactivity (right). For each point in time, the span shows the interval of the minimum and maximum state, and the 95\% confidence interval spans the 2.5 and 97.5 percentiles. The mean is computed pointwise, and the sample is the simulation corresponding to the mean value of $\kappa$. Bottom row: The pointwise relative error~\eqref{eq:relative:error} between simulating the approximate ODEs~\eqref{eq:lct:approximate:system:ODE} using \odeofs{} and simulating the original DDEs using the numerical method for stiff DDEs described in Appendix~\ref{sec:numerical:simulation:stiff}.}
	\label{fig:nuclear:fission:monte:carlo:simulation:parameters}
\end{figure}

\begin{figure}
	\subfloat{
		\includegraphics[width=0.475\textwidth]{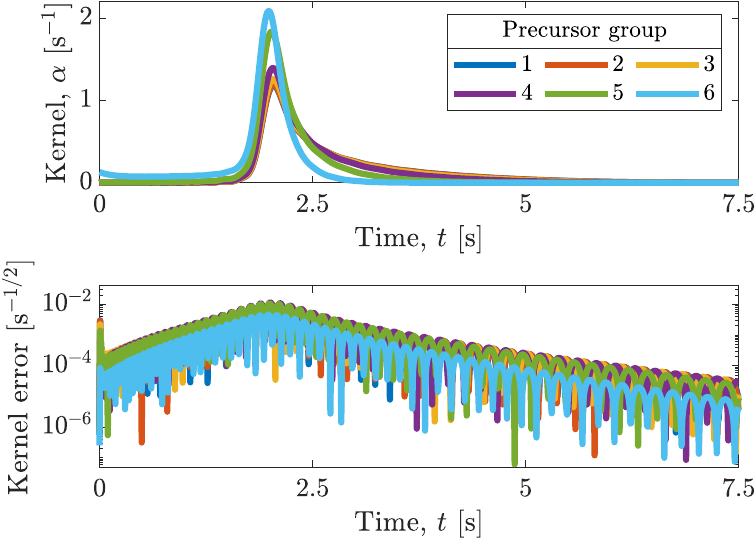}~
		\includegraphics[width=0.485\textwidth]{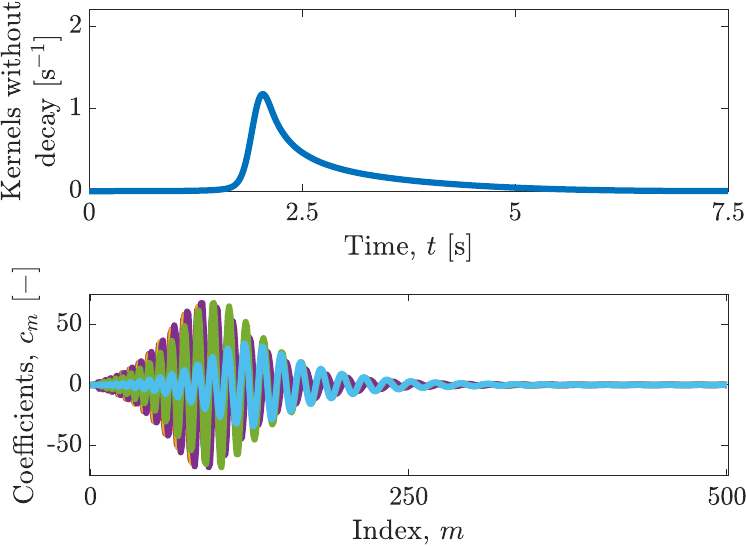}
	}
	\caption{Left column: The Erlang mixture approximations of the kernels in~\eqref{eq:nuclear:fission:kernels} used in the Monte Carlo simulation shown in Figure~\ref{fig:nuclear:fission:monte:carlo:simulation:parameters} (top) and the corresponding approximation errors (bottom). Right column: The normalized sum of folded normal kernels in the expression~\eqref{eq:nuclear:fission:kernels} \chgtwo{without the decay factor} (top) and the coefficients in the Erlang mixture kernel approximations. The colors are consistent across all but the top right figure.}
	\label{fig:nuclear:fission:identified:kernels}
\end{figure}
	\section{Conclusions}\label{sec:conclusions}%
In this paper, we propose an approximate approach for simulating and analyzing DDEs with distributed time delays based on conventional methods for ODEs. Specifically, we approximate the involved kernel by an Erlang mixture kernel and transform the resulting DDEs to ODEs using the LCT. \chgthree{We refer to the approximate ODEs as the Erlang ODE approximation of the DDEs.}
\chg{We prove that if the kernel $\alpha$ is regular (Definition~\ref{def:regular:kernel}) and the coefficients are chosen as in~\eqref{eq:erlang:mixture:approximation}, then an Erlang mixture approximation\chgtwo{,} $\hat \alpha$\chgtwo{, of order $M$} converges pointwise to $\alpha$ and the integral of the absolute error converges to zero as the rate parameter, $a$, \chgtwo{and the order, $M$,} go to infinity \chgtwo{provided that the ratio $M/a$ goes to infinit\chgthree{y} as well (Proposition~\ref{thm:erlang:mixture:approximation})}.
\chgtwo{The main theoretical result\chgthree{s} of the paper \chgthree{are} Theorem\chgthree{s}~\ref{thm:ivp:approximation} \chgthree{and}~\ref{thm:ode:approximation}, which state that} if the kernel $\alpha$ is also exponentially bounded\chgthree{, then} \chgtwo{1)}~the solution to the \chgthree{Erlang ODE approximation} converges \chgtwo{uniformly} to that of the original system \chgtwo{of DDEs, and 2)}~accurate stability analyses of the steady states of the original system of DDEs can be performed based on the \chgthree{Erlang ODE approximation}.
}%
Additionally, we propose an approach for determining optimal Erlang mixture approximations based on bisection and least-squares estimation, and we use numerical example\chg{s} to demonstrate that its accuracy and rate of convergence can be higher than using the theoretical expressions for the coefficients \chg{in~\eqref{eq:erlang:mixture:approximation}. We also compare with the accuracy of an exponential mixture approximation from the literature}. Finally, we present two numerical examples that demonstrate the efficacy of the proposed approach, and we compare the results with those obtained by two numerical methods (for non-stiff and stiff systems) formulated directly for DDEs with distributed time delays.

\chg{Future work may involve the derivation of error bounds, an extension to Erlang mixture approximations with multiple rate parameters, and an extension to systems with time-varying kernels.}

	\section*{Acknowledgments}
The author would like to acknowledge many inspiring discussions on distributed time delays and delay differential equations with Prof. John Wyller from the Norwegian University of Life Sciences, Norway. \chgtwo{The author would also like to thank Jonas Lolle Bj\"{o}rnsson for fruitful discussions on Erlang mixture approximations. Finally, the author would like to thank the reviewers for their efforts to improve the generality of the theory presented in the paper.}

	\bibliographystyle{siamplain}
\bibliography{./ref/ref}

\begin{thebibliography}{10}

\bibitem{Darabsah:etal:2024}
{\sc I.~Al-Darabsah, S.~A. Campbell, and B.~Rahman}, {\em Distributed delay and
  desynchronization in a neural mass model}, SIAM Journal on Applied Dynamical
  Systems, 23 (2024), pp.~3013--3051, \url{https://doi.org/10.1137/23M1618028}.

\bibitem{Aleksandrov:etal:2023}
{\sc A.~Aleksandrov, D.~Efimov, and E.~Fridman}, {\em Analysis of homogeneous
  systems with distributed delay using averaging approach}, IFAC PapersOnLine,
  56 (2023), pp.~174--179, \url{https://doi.org/10.1016/j.ifacol.2023.10.1565}.

\bibitem{Aleksandrov:etal:2024}
{\sc A.~Aleksandrov, D.~Efimov, and E.~Fridman}, {\em On stability of nonlinear
  homogeneous systems with distributed delays having variable kernels}, Systems
  and Control Letters, 190 (2024), p.~105853,
  \url{https://doi.org/10.1016/j.sysconle.2024.105853}.

\bibitem{Asmussen:2003}
{\sc S.~Asmussen}, {\em Applied probability and queues}, vol.~51 of
  Applications of Mathematics: Stochastic Modelling and Applied Probability,
  Springer, second~ed., 2003, \url{https://doi.org/10.1007/b97236}.

\bibitem{Bazighifan:etal:2019}
{\sc O.~Bazighifan, E.~M. Elabbasy, and O.~Moaaz}, {\em Oscillation of
  higher-order differential equations with distributed delay}, Journal of
  Inequalities and Applications, 2019 (2019),
  \url{https://doi.org/10.1186/s13660-019-2003-0}.

\bibitem{Bellen:2000}
{\sc A.~Bellen}, {\em Numerical methods for delay differential equations:
  Accuracy and stability problems}, IFAC Proceedings Volumes, 33 (2000),
  pp.~127--128, \url{https://doi.org/10.1016/S1474-6670(17)36928-8}.

\bibitem{Bergland:etal:2022}
{\sc H.~Bergland, P.~Mishra, P.~A. Pedersen, A.~Ponossov, and J.~Wyller}, {\em
  Time delays and pollution in an open-access fishery}, Natural Resource
  Modeling, 36 (2022), p.~e12363, \url{https://doi.org/10.1111/nrm.12363}.

\bibitem{Bladt:Nielsen:2017}
{\sc M.~Bladt and B.~F. Nielsen}, {\em Matrix-exponential distributions in
  applied probability}, vol.~81 of Probability Theory and Stochastic Modelling,
  Springer, 2017.

\bibitem{Cassidy:2021}
{\sc T.~Cassidy}, {\em Distributed delay differential equation representations
  of cyclic differential equations}, SIAM Journal of Applied Mathematics, 81
  (2021), pp.~1742--1766, \url{https://doi.org/10.1137/20M1351606}.

\bibitem{Cassidy:etal:2019}
{\sc T.~Cassidy, M.~Craig, and A.~R. Humphries}, {\em Equivalences between age
  structured models and state dependent distributed delay differential
  equations}, Mathematical Biosciences and Engineering, 16 (2019),
  pp.~5419--5450, \url{https://doi.org/10.3934/mbe.2019270}.

\bibitem{Cassidy:etal:2022}
{\sc T.~Cassidy, P.~Gillich, A.~R. Humphries, and C.~H. {van Dorp}}, {\em
  Numerical methods and hypoexponential approximations for gamma distributed
  delay differential equations}, IMA Journal of Applied Mathematics, 87 (2022),
  pp.~1043--1089, \url{https://doi.org/10.1093/imamat/hxac027}.

\bibitem{Christensen:Christensen:2005}
{\sc O.~Christensen and K.~L. Christensen}, {\em Approximation theory: From
  {T}aylor polynomials to wavelets}, Applied and Numerical Harmonic Analysis,
  Springer, 2005.

\bibitem{Diekmann:Gyllenberg:2012}
{\sc O.~Diekmann and M.~Gyllenberg}, {\em Equations with infinite delay:
  Blending the abstract and the concrete}, Journal of Differential Equations,
  252 (2012), pp.~819--851, \url{https://doi.org/10.1016/j.jde.2011.09.038}.

\bibitem{Diekmann:etal:2018}
{\sc O.~Diekmann, M.~Gyllenberg, and J.~A.~J. Metz}, {\em Finite dimensional
  state representation of linear and nonlinear delay systems}, Journal of
  Dynamics and Differential Equations, 30 (2018), pp.~1439--1467,
  \url{https://doi.org/10.1007/s10884-017-9611-5}.

\bibitem{Diekmann:etal:2020}
{\sc O.~Diekmann, M.~Gyllenberg, and J.~A.~J. Metz}, {\em Finite dimensional
  state representation of physiologically structured populations}, Journal of
  Mathematical Biology, 80 (2020), pp.~205--273,
  \url{https://doi.org/10.1007/s00285-019-01454-0}.

\bibitem{DTU:DCC:Resource}
{\sc {DTU Computing Center}}, {\em {DTU Computing Center resources}}, 2025,
  \url{https://doi.org/10.48714/DTU.HPC.0001},
  \url{https://doi.org/10.48714/DTU.HPC.0001}.

\bibitem{Duderstadt:Hamilton:1976}
{\sc J.~J. Duderstadt and L.~J. Hamilton}, {\em Nuclear reactor analysis},
  Wiley, 1976.

\bibitem{Eremin:2019}
{\sc A.~S. Eremin}, {\em Runge-{K}utta methods for differential equations with
  distributed delays}, AIP Conference Proceedings, 2116 (2019), p.~140003,
  \url{https://doi.org/10.1063/1.5114130}.

\bibitem{Guerrini:etal:2020}
{\sc L.~Guerrini, A.~Krawiec, and M.~Szyd{\l}owski}, {\em Bifurcations in an
  economic growth model with a distributed time delay transformed to {ODE}},
  Nonlinear Dynamics, 101 (2020), pp.~1263--1279,
  \url{https://doi.org/10.1007/s11071-020-05824-y}.

\bibitem{Guglielmi:Hairer:2025}
{\sc N.~Guglielmi and E.~Hairer}, {\em Applying stiff integrators for ordinary
  differential equations and delay differential equations to problems with
  distributed delays}, SIAM Journal on Scientific Computing, 47 (2025),
  pp.~A102--A123, \url{https://doi.org/10.1137/24M1632413}.

\bibitem{Hahn:Epstein:1996}
{\sc L.-S. Hahn and B.~Epstein}, {\em Classical complex analysis}, Jones and
  Bartlett Books in Mathematics and Computer Science, Jones and Bartlett, 1996.

\bibitem{Hale:Lunel:1993}
{\sc J.~K. Hale and S.~M. {Verduyn Lunel}}, {\em Introduction to functional
  differential equations}, vol.~99 of Applied Mathematical Sciences, Springer,
  1993, \url{https://doi.org/10.1007/978-1-4612-4342-7}.

\bibitem{Hu:etal:2018}
{\sc S.~Hu, M.~Dunlavey, S.~Guzy, and N.~Teuscher}, {\em A distributed delay
  approach for modeling delayed outcomes in pharmacokinetics and
  pharmacodynamics studies}, Journal of Pharmacokinetics and Pharmacodynamics,
  45 (2018), pp.~285--308, \url{https://doi.org/10.1007/s10928-018-9570-4}.

\bibitem{Huang:Vandewalle:2004}
{\sc C.~Huang and S.~Vandewalle}, {\em An analysis of delay-dependent stability
  for ordinary and partial differential equations with fixed and distributed
  delays}, SIAM Journal on Scientific Computing, 25 (2004), pp.~1608--1632,
  \url{https://doi.org/10.1137/S1064827502409717}.

\bibitem{Hurtado:Kirosingh:2019}
{\sc P.~J. Hurtado and A.~S. Kirosingh}, {\em Generalizations of the `{L}inear
  {C}hain {T}rick': Incorporating more flexible dwell time distributions into
  mean field {ODE} models}, Journal of Mathematical Biology, 79 (2019),
  pp.~1831--1883, \url{https://doi.org/10.1007/s00285-019-01412-w}.

\bibitem{Hurtado:Richards:2020}
{\sc P.~J. Hurtado and C.~Richards}, {\em A procedure for deriving new {ODE}
  models: Using the generalized linear chain trick to incorporate phase-type
  distributed delay and dwell time assumptions}, Mathematics in Applied
  Sciences and Engineering, 1 (2020), pp.~412--424,
  \url{https://doi.org/10.5206/mase/10857}.

\bibitem{Hurtado:Richards:2021}
{\sc P.~J. Hurtado and C.~Richards}, {\em Building mean field {ODE} models
  using the generalized linear chain trick \& {M}arkov chain theory}, Journal
  of Biological Dynamics, 15 (2021), pp.~S248--S272,
  \url{https://doi.org/10.1080/17513758.2021.1912418}.

\bibitem{Ibe:2014}
{\sc O.~C. Ibe}, {\em Fundamentals of applied probability and random
  processes}, Elsevier, 2nd~ed., 2014,
  \url{https://doi.org/10.1016/C2013-0-19171-4}.

\bibitem{Kallenberg:2002}
{\sc O.~Kallenberg}, {\em Foundations of modern probability}, Springer, 2002,
  \url{https://doi.org/10.1007/978-3-030-61871-1}.

\bibitem{Kolmanovskii:Myshkis:1992}
{\sc V.~Kolmanovskii and A.~Myshkis}, {\em Applied theory of functional
  differential equations}, vol.~85 of Mathematics and Its Applications,
  Springer, 1992, \url{https://doi.org/10.1007/978-94-015-8084-7}.

\bibitem{Korevaar:1968}
{\sc J.~Korevaar}, {\em Mathematical methods}, vol.~1, Academic Press, 1968.

\bibitem{Krzyzanski:2019}
{\sc W.~Krzyzanski}, {\em Ordinary differential equation approximation of gamma
  distributed delay model}, Journal of Pharmacokinetics and Pharmacodynamics,
  46 (2019), pp.~53--63, \url{https://doi.org/10.1007/s10928-018-09618-z}.

\bibitem{Krzyzanski:etal:2018}
{\sc W.~Krzyzanski, S.~Hu, and M.~Dunlavey}, {\em Evaluation of performance of
  distributed delay model for chemotherapy-induced myelosuppression}, Journal
  of Pharmacokinetics and Pharmacodynamics, 45 (2018), pp.~329--337,
  \url{https://doi.org/10.1007/s10928-018-9575-z}.

\bibitem{Leite:etal:2016}
{\sc S.~Q.~B. Leite, M.~T. {de Vilhena}, and B.~E.~J. Bodmann}, {\em Solution
  of the point reactor kinetics equations with temperature feedback by the
  {ITS2} method}, Progress in Nuclear Energy, 91 (2016), pp.~240--249,
  \url{https://doi.org/10.1016/j.pnucene.2016.05.001}.

\bibitem{MacDonald:1978}
{\sc N.~MacDonald}, {\em Time lags in biological models}, vol.~27 of Lecture
  Notes in Biomathematics, Springer, 1978,
  \url{https://doi.org/10.1007/978-3-642-93107-9}.

\bibitem{Meiss:2007}
{\sc J.~D. Meiss}, {\em Differential dynamical systems}, Mathematical Modeling
  and Computation, SIAM, 2007, \url{https://doi.org/10.1137/1.9780898718232}.

\bibitem{Nevermann:Gros:2023}
{\sc D.~H. Nevermann and C.~Gros}, {\em Mapping dynamical systems with
  distributed time delays to sets of ordinary differential equations}, Journal
  of Physics A: Mathematical and Theoretical, 56 (2023), p.~345702,
  \url{https://doi.org/10.1088/1751-8121/acea06}.

\bibitem{Niculescu:Gu:2004}
{\sc S.-I. Niculescu and K.~Gu}, eds., {\em Advances in time-delay systems},
  vol.~38 of Lecture Notes in Computational Science and Engineering, Springer,
  2004, \url{https://doi.org/10.1007/978-3-642-18482-6}.

\bibitem{Nocedal:Wright:2006}
{\sc J.~Nocedal and S.~J. Wright}, {\em Numerical optimization}, Springer
  Series in Operations Research and Financial Engineering, Springer, 2nd~ed.,
  2006, \url{https://doi.org/10.1007/978-0-387-40065-5}.

\bibitem{Polyanin:etal:2023}
{\sc A.~D. Polyanin, V.~G. Sorokin, and A.~I. Zhurov}, {\em Delay ordinary and
  partial differential equations}, Advances in Applied Mathematics, CRC Press,
  2023, \url{https://doi.org/10.1201/9781003042310}.

\bibitem{Ponosov:etal:2004}
{\sc A.~Ponosov, A.~Shindiapin, and J.~J. Miguel}, {\em The {W}-transform links
  delay and ordinary differential equations}, Functional Differential
  Equations, 9 (2002), pp.~437--469.

\bibitem{Protter:Morrey:1985}
{\sc M.~H. Protter and C.~B. {Morrey, Jr.}}, {\em Intermediate Calculus},
  Undergraduate Texts in Mathematics, Springer, 2nd~ed., 1985,
  \url{https://doi.org/10.1007/978-1-4612-1086-3}.

\bibitem{Rahman:etal:2015}
{\sc B.~Rahman, K.~B. Blyuss, and Y.~N. Kyrychko}, {\em Dynamics of neural
  systems with discrete and distributed time delays}, SIAM Journal on Applied
  Dynamical Systems, 14 (2015), pp.~2069--2095,
  \url{https://doi.org/10.1137/15M1006398}.

\bibitem{Ritschel:2025}
{\sc T.~K.~S. Ritschel}, {\em Numerical optimal control for distributed delay
  differential equations: A simultaneous approach based on linearization of the
  delayed variables}, in Proceedings of the 23rd European Control Conference
  (ECC'25), Thessaloniki, Greece, June 2025, pp.~759--764,
  \url{https://doi.org/10.23919/ECC65951.2025.11187183}.

\bibitem{Ritschel:Wyller:2025}
{\sc T.~K.~S. Ritschel and J.~Wyller}, {\em An algorithm for distributed time
  delay identification without a priori knowledge of the kernel}, Automatica,
  178 (2025), p.~112382,
  \url{https://doi.org/10.1016/j.automatica.2025.112382}.

\bibitem{Roache:2002}
{\sc P.~J. Roache}, {\em Code verification by the method of manufactured
  solutions}, Journal of Fluids Engineering, 124 (2002), pp.~4--10,
  \url{https://doi.org/10.1115/1.1436090}.

\bibitem{Romik:2000}
{\sc D.~Romik}, {\em Stirling's approximation for $n!$: The ultimate short
  proof?}, The American Mathematical Monthly, 107 (2000), pp.~556--557,
  \url{https://doi.org/10.1080/00029890.2000.12005235}.

\bibitem{Royden:Fitzpatrick:2010}
{\sc H.~L. Royden and P.~M. Fitzpatrick}, {\em Real analysis}, Pearson,
  4th~ed., 2010.

\bibitem{Rudin:1976}
{\sc W.~Rudin}, {\em Principles of mathematical analysis}, International Series
  in Pure and Applied Mathematics, McGraw-Hill, 3rd~ed., 1976.

\bibitem{Shampine:Thompson:2001}
{\sc L.~F. Shampine and S.~Thompson}, {\em Solving {DDE}s in \textsc{Matlab}},
  Applied Numerical Mathematics, 37 (2001), pp.~441--458,
  \url{https://doi.org/10.1016/S0168-9274(00)00055-6}.

\bibitem{Smith:2011}
{\sc H.~Smith}, {\em An introduction to delay differential equations with
  applications to the life sciences}, vol.~57 of Texts in Applied Mathematics,
  Springer, 2011, \url{https://doi.org/10.1007/978-1-4419-7646-8}.

\bibitem{Torkamani:etal:2013}
{\sc S.~Torkamani, E.~A. Butcher, and F.~A. Khasawneh}, {\em Parameter
  identification in periodic delay differential equations with distributed
  delay}, Communications in Nonlinear Science and Numerical Simulation, 18
  (2013), pp.~1016--1026, \url{https://doi.org/10.1016/j.cnsns.2012.09.001}.

\bibitem{Wang:etal:2018}
{\sc Y.~Wang, Y.~Cong, and G.~Hu}, {\em Delay-dependent stability of linear
  multistep methods for differential systems with distributed delays}, Applied
  Mathematics and Mechanics, 39 (2018), pp.~1837--1844,
  \url{https://doi.org/10.1007/s10483-018-2392-9}.

\bibitem{Wooten:Powers:2018}
{\sc D.~Wooten and J.~J. Powers}, {\em A review of molten salt reactor kinetics
  models}, Nuclear Science and Engineering, 191 (2018), pp.~203--230,
  \url{https://doi.org/10.1080/00295639.2018.1480182}.

\bibitem{Yan:etal:2021}
{\sc X.~Yan, R.~Bauer, G.~Koch, J.~Schropp, J.~J.~P. Ruixo, and W.~Krzyzanski},
  {\em Delay differential equations based models in {NONMEM}}, Journal of
  Pharmacokinetics and Pharmacodynamics, 48 (2021), pp.~763--802,
  \url{https://doi.org/10.1007/s10928-021-09770-z}.

\bibitem{Yuan:Belair:2011}
{\sc Y.~Yuan and J.~B{\'{e}}lair}, {\em Stability and {H}opf bifurcation
  analysis for functional differential equation with distributed delay}, SIAM
  Journal on Applied Dynamical Systems, 10 (2011), pp.~551--581,
  \url{https://doi.org/10.1137/100794493}.

\bibitem{Zhang:2005}
{\sc F.~Zhang}, {\em The {S}chur complement and its applications}, vol.~4 of
  Numerical Methods and Algorithms, Springer, 2005,
  \url{https://doi.org/10.1007/b105056}.

\bibitem{Zhang:Xiao:2016}
{\sc G.~Zhang and A.~Xiao}, {\em Exact and numerical stability analysis of
  reaction-diffusion equations with distributed delays}, Frontiers of
  Mathematics in China, 11 (2016), pp.~189--205,
  \url{https://doi.org/10.1007/s11464-015-0506-7}.

\bibitem{Zhou:2016}
{\sc Z.~Zhou}, {\em Statistical inference of distributed delay differential
  equations}, PhD thesis, University of Iowa, 2016,
  \url{https://doi.org/10.17077/etd.xgcff76n}.

\end{thebibliography}

	\appendix

	\section{\chgtwo{Convergence of Taylor polynomial of exponential function}}\label{sec:exponential:function}%
\chgtwo{In this appendix, we show a result that involves a truncation of the sum in~\eqref{eq:exponential}, which is used in many of the proofs in this work.
\begin{lemma}\label{thm:exponential:taylor:polynomial}
	For any $t \in \Rnn$, the following expression converges to one as $a \rightarrow \infty$ if $M/a \rightarrow \infty$ as well:
	\begin{align}\label{eq:exponential:taylor:polynomial}
		e^{-at} \sum_{m=0}^M \frac{(at)^m}{m!} \rightarrow 1.
	\end{align}
\end{lemma}
\begin{proof}
	We use the bound on the error of the Taylor polynomial of $e^{at}$ around $t = 0$ (see, e.g.,~\cite[Thm.~1.3.6 and its proof]{Christensen:Christensen:2005}):
	\begin{align}
		\left|1 - e^{-at} \sum_{m=0}^M \frac{(at)^m}{m!}\right|
		&= e^{-at} \left|e^{at}  - \sum_{m=0}^M \frac{(at)^m}{m!}\right|
		\leq e^{-at} e^{at} \frac{(at)^{M+1}}{(M+1)!} = \frac{(at)^{M+1}}{(M+1)!}
	\end{align}
	Next, we show that this bound converges to zero if $M/a \rightarrow \infty$. Specifically, we show that its logarithm converges to $-\infty$:
	\begin{align}\label{sec:exponential:function:convergence:inequality}
		\ln \frac{(at)^{M+1}}{(M+1)!}
		&= (M+1) \ln at - \ln (M+1)! \\
		&\approx (M+1) \ln at - \left((M+1) \ln (M+1) - (M+1) + \frac{1}{2} \ln 2\pi (M+1)\right) \nonumber \\
		&\leq (M+1) \left(\ln at - \ln (M+1) + 1\right)
		= (M+1) \left(\ln \frac{a}{M+1} + \ln t + 1\right). \nonumber
	\end{align}
	We have used Stirling's approximation (see Section~\ref{sec:notation}) of $\ln (M+1)!$, and the last expression converges to $-\infty$ if $a/(M+1) \rightarrow 0$ or, equivalently, if $(M+1)/a \rightarrow \infty$. Note that the error of Stirling's approximation converges to zero as $M \rightarrow \infty$, i.e., the result will hold even though Stirling's approximation was used.
\end{proof}
}%
	\section{\chg{Erlang mixture delta family}}\label{sec:convergence}%
\chg{In this \chgtwo{appendix}, we introduce the Erlang mixture delta family and use it to derive an identity for Erlang mixture kernel\chgtwo{s}, which is used to prove \chgthree{pointwise} convergence \chgtwo{of Erlang mixture approximations} in Section~\ref{sec:erlang:mixture:kernel:convergence}. For a given rate parameter, $a$, the identity also provides \chgtwo{the} theoretical expressions for the coefficients \chgtwo{in~\eqref{eq:erlang:mixture:approximation}}, which depend on the kernel $\alpha$ that is being approximated. Furthermore, we prove a number of \chgtwo{statements} about the delta family that will also be used in Section~\ref{sec:erlang:mixture:kernel:convergence}.} \chg{Throughout this \chgtwo{appendix}, the time argument $t$ is a fixed parameter.}

\begin{lemma}[\chgtwo{Erlang mixture delta family}]\label{thm:erlang:mixture:delta:family}
	\chgtwo{Let $\delta_a: \Rnn \times \Rnn \rightarrow \Rnn$ be an \emph{Erlang mixture delta family} of order $M \in \Nnn$ with rate parameter $a \in \Rp$ defined as the piecewise constant function
	\begin{align}\label{eq:erlang:mixture:delta:family}
		\delta_a(t, s) &=
		\begin{cases}
			\ell_0(t), & s \in [s_0, s_1), \\
			&\vdots \\
			\ell_M(t), & s \in [s_M, s_{M+1}), \\
			0, & s \in [s_{M+1}, \infty),
		\end{cases}
	\end{align}
	}%
	where $\ell_m$ is an Erlang kernel of $m$'th order with rate parameter $a$. The \chgtwo{boundaries of the intervals over which $\delta_a$ is constant are $s_m = m \Delta s \in \Rnn$ for $m = 0, \ldots, M+1$, and the width} of the intervals is $\Delta s = s_{m+1} - s_m \in \Rp$, which depends on $a$\chgtwo{. I}n each interval, $\delta_a$ is equal to an Erlang kernel of different order.
	\chgtwo{Then, the following statement holds.
	\begin{enumerate}[label=(\alph*)]
		\item \label{thm:erlang:mixture:delta:family:kernel:identity}
		The Erlang mixture \chgthree{approximation}, $\hat \alpha: \Rnn \rightarrow \R$, defined in~\eqref{eq:erlang:mixture:approximation} satisfies the identity
		\begin{align}
			\hat \alpha(t) &= \int_0^\infty \delta_a(t, s) \alpha(s) \incr s.
		\end{align}
	\end{enumerate}
	Furthermore, the following asymptotic properties hold as $a \rightarrow \infty$ when $M/a \rightarrow \infty$ as well.
	\begin{enumerate}[resume, label=(\alph*)]
		\item \label{thm:erlang:mixture:delta:family:integral}
		The integral of $\delta_a$ over the second argument converges to one from below, i.e.,
		\begin{align}
			\int_0^\infty \delta_a(t, s) \incr s &\rightarrow 1, & \int_0^\infty \delta_a(t, s) \incr s \leq 1.
		\end{align}
		\item \label{thm:erlang:mixture:delta:family:zero:convergence}
		For any $\gamma \in \Rp$, $\delta_a(t, s)$ converges uniformly to zero for all $s \in \Rnn$ satisfying $\gamma \leq |t - s| < \infty$:
		\begin{align}
			\delta_a(t, s) &\rightarrow 0.
		\end{align}
		\item \label{thm:erlang:mixture:delta:family:sub:integral}
		For all $\gamma \in \Rp$ and $t \in \Rnn$,
		\begin{align}\label{eq:erlang:mixture:delta:family:sub:integral}
			\int_{[t - \gamma]^+}^{t + \gamma} \delta_a(t, s) \incr s &\rightarrow 1.
		\end{align}
	\end{enumerate}
	}
\end{lemma}
\begin{corollary}\label{lem:erlang:mixture:delta:t:equal:zero}
	For $t = 0$, the Erlang mixture delta family is given by
	\begin{align}
		\delta_a(0, s) &=
		\begin{cases}
			a, & s \in [0, 1/a), \\
			0, & \mathrm{otherwise}.
		\end{cases}
	\end{align}
\end{corollary}
\begin{proof}
	By direct substitution, $\ell_0(0) = a$ and $\ell_m(0) = 0$ for $m > 0$. Furthermore, by definition, $[s_0, s_1) = [0, 1/a)$.
\end{proof}

\begin{figure}[t]
	\centering
	\subfloat{\includegraphics[width=\textwidth]{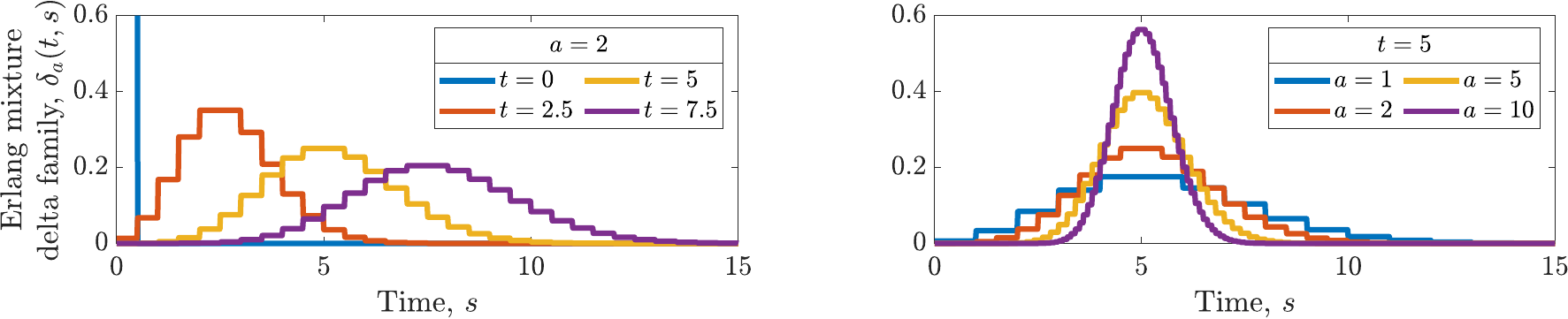}}
	\caption{\chgtwo{An infinite-order} Erlang mixture delta family for a fixed rate parameter, $a$, and different values of $t$ (left) and for fixed $t$ and different values of $a$ (right). For $t = 0$ in the left figure, the value of $\delta_a$ is 2 for $s \in [0, 0.5)$ (see also \chgtwo{Corollary}~\ref{lem:erlang:mixture:delta:t:equal:zero}).}
	\label{fig:erlang:mixture:delta:family}
\end{figure}
\begin{proof}[Proof \chgtwo{of Lemma~\ref{thm:erlang:mixture:delta:family}\ref{thm:erlang:mixture:delta:family:kernel:identity}}]
	\chg{
		The identity follows directly from the substitution of the definition of the Erlang mixture delta family in~\eqref{eq:erlang:mixture:delta:family}:
		\begin{align}
			\int_0^\infty \delta_a(t, s) \alpha(s) \incr s &= \sum_{m=0}^{\chgtwo{M}} \ell_m(t) \int_{s_m}^{s_{m+1}} \alpha(s) \incr s.
		\end{align}
		When the coefficients, $\{c_m\}_{m=0}^{\chmtwo{M}}$, are \chgtwo{defined as in~\eqref{eq:erlang:mixture:approximation}}, the expression on the right-hand side is \chgtwo{equal to the Erlang mixture \chgthree{approximation}, $\hat \alpha(t)$, in the same equation.}
	}
\end{proof}
\begin{proof}[Proof \chgtwo{of Lemma~\ref{thm:erlang:mixture:delta:family}\ref{thm:erlang:mixture:delta:family:integral}}]
	We substitute the expression for $\delta_a$ in~\eqref{eq:erlang:mixture:delta:family} and simplify:
	\begin{align}
		\int_0^\infty \delta_a(t, s) \incr s
		&= \sum_{m=0}^{\chmtwo{M}} \ell_m(t) \int_{s_m}^{s_{m+1}} 1 \incr s = \sum_{m=0}^{\chmtwo{M}} \ell_m(t) \Delta s = \sum_{m=0}^{\chmtwo{M}} \frac{(at)^m}{m!} a e^{-at} \frac{1}{a}
		 = \sum_{m=0}^{\chmtwo{M}} \frac{(at)^m}{m!} e^{-at}\chmtwo{.}
	\end{align}
	\chgtwo{The sum contains a finite number of terms in the expression~\eqref{eq:exponential} for $e^{at}$, and all terms are non-negative. Consequently, the expression is less than or equal to one. Furthermore, according to Lemma~\ref{thm:exponential:taylor:polynomial}, it converges to one} as $a \rightarrow \infty$ \chgtwo{when $M/a \rightarrow \infty$ as well}.
\end{proof}
\begin{lemma}\label{lem:erlang:mixture:delta:family:monotonicity}
	\chgtwo{For a sufficiently large ratio $M/a$, a}n Erlang mixture delta family, $\delta_a: \Rnn \times \Rnn \rightarrow \Rnn$ \chgtwo{of order $M \in \Nnn$ with rate parameter $a \in \Rp$}, is non-decreasing in the second argument, $s$, when it is below the first argument, $t$, and non-increasing when it is above. Specifically, since $\delta_a$ is piecewise constant over intervals of size $\Delta s$, $\delta_a(t, s) \leq \delta_a(t, s + \Delta s)$ for $s < t$, and $\delta_a(t, s) \geq \delta_a(t, s + \Delta s)$ for $s \geq t$.
\end{lemma}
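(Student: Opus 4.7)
The plan is to exploit the piecewise-constant structure of $\delta_a(t,\cdot)$ to reduce the claim to a single pointwise comparison between two consecutive Erlang kernel values, and then settle that comparison by a direct ratio calculation.

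First, I fix $t \in \Rnn$ and let $m \in \Nnn$ be the unique index with $s \in [s_m, s_{m+1})$. Because the cell width $\Delta s = 1/a$ matches the grid spacing, a shift by $\Delta s$ advances to the next cell, so $s + \Delta s \in [s_{m+1}, s_{m+2})$. Definition~\ref{def:erlang:mixture:delta:family} then gives $\delta_a(t, s) = \ell_m(t)$ and $\delta_a(t, s + \Delta s) = \ell_{m+1}(t)$, so the lemma reduces to signing the difference $\ell_{m+1}(t) - \ell_m(t)$ according to the position of $s_{m+1}$ relative to $t$.

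Second, using the normalization constants from Definition~\ref{def:erlang:kernel}, the exponential factor $e^{-at}$ cancels and $b_{m+1}/b_m = a/(m+1)$, so for $t > 0$
\begin{align*}
	\frac{\ell_{m+1}(t)}{\ell_m(t)} = \frac{b_{m+1}}{b_m}\,t = \frac{a t}{m+1} = \frac{t}{s_{m+1}}.
\end{align*}
Hence $\ell_{m+1}(t) \geq \ell_m(t) \iff s_{m+1} \leq t$ and $\ell_{m+1}(t) \leq \ell_m(t) \iff s_{m+1} \geq t$. The boundary case $t = 0$ is covered by Lemma~\ref{lem:erlang:mixture:delta:t:equal:zero}, whose support $[0, 1/a)$ makes $\delta_a(0, \cdot)$ trivially non-increasing.

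Third, I translate between the hypothesis on $s$ and the location of the grid point $s_{m+1}$. If $s \geq t$, then $s < s_{m+1}$ forces $s_{m+1} > t$, so the ratio is at most one and the non-increasing half of the claim follows directly. For $s < t$, the non-decreasing half follows from the ratio whenever $s_{m+1} \leq t$. The main obstacle is the bookkeeping at the single cell that straddles $t$, where the ratio flips sign. The observation that $\{\ell_m(t)\}_{m \in \Nnn}$ is (up to the factor $a$) the Poisson probability mass function with parameter $a t$, and therefore unimodal with mode at $\lfloor a t \rfloor$, is what resolves this: the sequence is non-decreasing up to its mode and non-increasing thereafter, which is exactly the monotonicity asserted for the piecewise-constant function $\delta_a(t, \cdot)$.
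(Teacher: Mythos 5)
Your core argument is the same as the paper's: both reduce the claim to comparing the consecutive values $\ell_m(t)$ and $\ell_{m+1}(t)$ on adjacent cells and both arrive at the threshold condition $m+1\le at$ (the paper writes it as $m\le at$ with the indices shifted by one); your ratio formulation $\ell_{m+1}(t)/\ell_m(t)=at/(m+1)=t/s_{m+1}$ is just a cleaner packaging of the same computation, and your handling of the $s\ge t$ direction and of $t=0$ is fine.

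The one soft spot is precisely the one you flagged, and the Poisson-unimodality remark does not actually close it: unimodality of $\{\ell_m(t)\}_m$ is equivalent to the ratio computation and places the monotonicity breakpoint at the grid point $s_{m^*+1}$ with $m^*=\lfloor at\rfloor$, not at $t$ itself. For $s\in[s_{m^*},t)$ one has $s<t$ but $s_{m^*+1}>t$, so the ratio is $t/s_{m^*+1}<1$ and $\delta_a(t,s)>\delta_a(t,s+\Delta s)$, contradicting the stated inequality; concretely, $a=1$, $t=1/2$, $s=1/4$ gives $\delta_a(t,s)=\ell_0(1/2)=e^{-1/2}$ while $\delta_a(t,s+\Delta s)=\ell_1(1/2)=\tfrac12 e^{-1/2}$. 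So the lemma as literally stated fails on the sub-cell of $[s_{m^*},s_{m^*+1})$ below $t$, and your closing claim that unimodality gives ``exactly the monotonicity asserted'' overstates what it gives: the correct statement is monotonicity up to and after the cell containing $t$, with the breakpoint off by at most $\Delta s=1/a$. You should not feel too bad about this: the paper's own proof has the identical gap (it derives the condition $s_m\le t$ but then concludes from the weaker fact $s<s_m$), and the discrepancy is harmless where the lemma is used, namely to get uniform decay of $\delta_a(t,\cdot)$ outside a fixed neighbourhood $|s-t|\ge\gamma$, from which the offending cell is excluded once $1/a<\gamma$. A fully rigorous write-up would either restate the lemma with the breakpoint at $s_{m^*+1}$ (equivalently, replace ``for $s<t$'' by ``for $s+\Delta s\le t$'') or note explicitly that the exceptional cell does not affect the application.
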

\begin{proof}
	Let $s \in [s_{m-1}, s_m)$. Then, $s + \Delta s \in [s_m, s_{m+1})$, and we derive a condition on $m$ for $\delta_a$ to be non-decreasing:
	\begin{align}
		\delta_a(t, s) &\leq \delta_a(t, s + \Delta s).
	\end{align}
	\chgtwo{When the ratio $M/a$ is sufficiently large, the left- and right-hand sides are equal to $\ell_{m-1}(t)$ and $\ell_m(t)$, respectively, and we substitute their expressions:}
	\begin{align}
		\frac{(at)^{m-1}}{(m-1)!} a e^{-at} &\leq \frac{(at)^m}{m!} a e^{-at}\chmtwo{.}
	\end{align}
	\chgtwo{Most factors cancel out,} and \chgtwo{we} obtain the condition
	\begin{align}
		m &\leq at.
	\end{align}
	This corresponds to $s_m = m \Delta s = m/a \leq t$, and $s < s_m$ by assumption. Consequently, $\delta_a$ is non-decreasing in $s$ for $s < t$, and the proof that $\delta_a$ is non-increasing in $s$ for $s \geq t$ is analogous.
\end{proof}
\begin{proof}[Proof \chgtwo{of Lemma~\ref{thm:erlang:mixture:delta:family}\ref{thm:erlang:mixture:delta:family:zero:convergence}}]
	In the proof, we will use Stirling's approximation \chgtwo{(see Section~\ref{sec:notation}) and that the approximation converges, i.e., asymptotic results for the approximate quantity will also hold for the original quantity.}
	We will show that for given $\epsilon \in \Rp$ and for fixed $s$ and $t$, it is possible to choose $a$ sufficiently large that the inequality
	\begin{align}
		\delta_a(t, s) &< \epsilon
	\end{align}
	is satisfied if $s \neq t$. \chgtwo{Since $M/a \rightarrow \infty$, $\delta_a(t, s) = \ell_m(t)$ for sufficiently large $a$, and f}or simplicity, we assume that $a$ is chosen such that $m = as$ is integer. First, we assume that $t > 0$ and substitute $\ell_m$ and the value of $m$:
	\begin{align}
		\chmthree{\delta_a(t, s) =} \frac{(at)^{as}}{(as)!} a e^{-at} &< \epsilon.
	\end{align}
	\chgthree{For $s = 0$, we obtain $a e^{-at} < \epsilon$, and since $t > 0$, the inequality is satisfied for sufficiently large $a$.} Next, \chgthree{for $s > 0$,} we take the logarithm on both sides \chgthree{in order to apply Stirling's approximation:}
	\begin{align}
		\chmthree{\ln \delta_a(t, s) =} as \ln at - \ln (as)! + \ln a - at &< \ln \epsilon\chmthree{.}
	\end{align}
	\chgthree{We use} Stirling's approximation \chgthree{to approximate $\delta_a$ (specifically, the term $\ln (as)!$), and we denote the result $\hat \delta_a$. The objective is now to show that $\hat \delta_a(t, s) \rightarrow 0$ for $s \neq t$ and $t, s > 0$, i.e., that we can choose $a$ large enough that}
	\begin{align}
		\chmthree{\ln \hat \delta_a(t, s) =} as \ln at - \left(as \ln as - as + \frac{1}{2} \ln 2 \pi as\right) + \ln a - at &< \ln \epsilon.
	\end{align}
	Finally, we rearrange terms:
	\begin{align}
		a \left(s \left(\ln \frac{t}{s} + 1\right) - t\right) + \frac{1}{2} \ln a &< \ln \epsilon + \frac{1}{2} \ln 2 \pi s.
	\end{align}
	For $s \neq t$, the factor of $a$ in the first term is always negative, and it is always possible to satisfy the bound for sufficiently large $a$. This follows from the inequality $\ln x \leq x - 1$ for $x > 0$\chgthree{, where equality only holds for $x = 1$}. In contrast, if $s = t$, the factor of $a$ is zero, and the inequality is only satisfied if
	\begin{align}
		\frac{1}{2} \ln a &< \ln \epsilon + \frac{1}{2} \ln 2 \pi s & \mathrm{or} &&
		a < 2 \pi s \epsilon^2,
	\end{align}
	i.e., if $a$ is bounded from above. \chgthree{In conclusion, $\hat \delta_a(t, s) \to 0$ for $s \neq t$, which shows that $\delta_a(t, s) \to 0$ for $s \neq t$ and $t > 0$ since $\hat \delta_a \to \delta_a$ pointwise. Specifically, we can choose $a$ large enough that $\hat \delta_a(t, s) < \epsilon/2$ and $|\hat \delta_a(t, s) - \delta_a(t, s)| < \epsilon/2$ and use a triangle inequality:}
	\begin{align}
		\chmthree{\delta_a(t, s) &= \delta_a(t, s) - \hat \delta_a(t, s) + \hat \delta_a(t, s) \leq |\delta_a(t, s) - \hat \delta_a(t, s)| + \hat \delta_a(t, s) \leq \epsilon/2 + \epsilon/2 = \epsilon.}
	\end{align}

	For $t = 0$, $\delta_a(t, s)$ is only nonzero for $s < 1/a$ (see \chgtwo{Corollary}~\ref{lem:erlang:mixture:delta:t:equal:zero}). Consequently, for a fixed $s \neq t$, it is always possible to choose $a$ large enough that $\delta_a(t, s) = 0$.
	\chgtwo{Finally, the convergence is uniform due to the monotonicity properties shown in Lemma~\ref{lem:erlang:mixture:delta:family:monotonicity}, i.e., for a given $\gamma > 0$, $a$ can be chosen large enough that $\max\{\delta_a(t, [t-\gamma]^+), \delta_a(t, t+\gamma)\} < \epsilon$ and the same will hold for all other values of $s \in \Rnn$ satisfying $\gamma \leq |t - s| < \infty$.}
\end{proof}
\begin{proof}[Proof \chgtwo{of Lemma~\ref{thm:erlang:mixture:delta:family}\ref{thm:erlang:mixture:delta:family:sub:integral}}]
	First, we assume that $t > 0$ and bound the integral from above and below using Lemma~\ref{lem:theorem:kallenberg}:
	\begin{align}
		\chmtwo{\frac{1}{k}} - \frac{\sigma^2(t)}{\bar \gamma^2\chmtwo{(t)}} \leq \int_{\chmtwo{k} \mu(t) - \bar \gamma\chmtwo{(t)}}^{\chmtwo{k} \mu(t) + \bar \gamma\chmtwo{(t)}} \delta_a(t, s) \incr s \leq \int_{[\chmtwo{k} \mu(t) - \gamma]^+}^{\chmtwo{k} \mu(t) + \gamma} \delta_a(t, s) \incr s \leq \int_0^\infty \delta_a(t, s) \incr s \chmtwo{= \frac{1}{k}}.
	\end{align}
	Here, $\bar \gamma\chmtwo{(t)} = \min\{\chmtwo{\chmthree{k}\mu(}t\chmtwo{)}, \gamma\}$ such that the limits in the leftmost integral are non-negative\chgtwo{, and $k = \left(\int_0^\infty \delta_a(t, s) \incr s\right)^{-1}$}. Consequently, since $\mu(t) \rightarrow t$\chgtwo{,} $\sigma^2(t) \rightarrow 0$\chgtwo{, $k \rightarrow 1$, and $\chmthree{\bar \gamma}(t) \rightarrow \min\{t, \gamma\} > 0$} as $a \rightarrow \infty$ \chgtwo{and $M/a \rightarrow \infty$ (see Lemma~\ref{thm:erlang:mixture:delta:family}\ref{thm:erlang:mixture:delta:family:integral} and Lemma~\ref{lem:erlang:mixture:delta:family:mean}--\ref{lem:erlang:mixture:delta:family:variance})}, the integral in~\eqref{eq:erlang:mixture:delta:family:sub:integral} approaches one. Next, if $t = 0$, the integral in~\eqref{eq:erlang:mixture:delta:family:sub:integral} is equal to one for all $a \geq 1/\gamma$.
\end{proof}
	\section{\chgtwo{Erlang mixture delta family: Mean, variance, and Chebyshev's inequality}}%
\chgtwo{In this appendix, we use results from probability theory to prove bounds that are used in the proof of Lemma~\ref{thm:erlang:mixture:delta:family}\ref{thm:erlang:mixture:delta:family:sub:integral}. W}e will draw on the analogy of $\delta_a$ to the probability density function of a non-negative random variable, e.g., by determining its mean and variance.
\begin{definition}
	\chg{For given $t\in\Rnn$, t}he mean, $\mu: \Rnn \rightarrow \Rnn$, associated with a delta family, $\delta_a: \Rnn \times \Rnn \rightarrow \Rnn$, is given by the integral
	\begin{align}\label{eq:definition:mean}
		\mu(t)
		&= \int_0^\infty \delta_a(t, s) s \incr s.
	\end{align}
\end{definition}
\begin{definition}
	\chg{For given $t\in\Rnn$, t}he variance, $\sigma^2: \Rnn \rightarrow \Rnn$, associated with a delta family, $\delta_a: \Rnn \times \Rnn \rightarrow \Rnn$, is given by the integral
	\begin{align}
		\sigma^2(t) &= \int_0^\infty \delta_a(t, s) (s - \mu(t))^2 \incr s,
	\end{align}
	or, equivalently, by
	\begin{align}\label{eq:erlang:mixture:delta:family:variance}
		\sigma^2(t) &= \mu_2(t) - \mu^2(t), &
		\mu_2(t) &= \int_0^\infty \delta_a(t, s) s^2 \incr s,
	\end{align}
	where $\mu: \Rnn \rightarrow \Rnn$ is the mean given by~\eqref{eq:definition:mean}.
\end{definition}
\begin{lemma}\label{lem:erlang:mixture:delta:family:mean}
	\chg{For given $t \in \Rnn$, t}he mean, $\mu: \Rnn \rightarrow \Rnn$, associated with an Erlang mixture delta family, $\delta_a: \Rnn \times \Rnn \rightarrow \Rnn$, \chgtwo{of order $M \in \Nnn$} with rate parameter $a$ \chgtwo{converges to $t$,}
	\begin{align}\label{eq:erlang:mixture:delta:family:mean}
		\mu(t) &\chmtwo{\rightarrow} t,
	\end{align}
	\chgtwo{as $a \rightarrow \infty$ if $M/a \rightarrow \infty$ as well. Furthermore,} for finite $a \in \Rp$, $\mu(t) > 0$ for all $t \in \Rnn$.
\end{lemma}
\begin{proof}
	We substitute the definition of $\delta_a$ in~\eqref{eq:erlang:mixture:delta:family}:
	\begin{align}\label{eq:erlang:mixture:delta:family:mu:proof}
		\mu(t)
		&= \int_0^\infty \delta_a(t, s) s \incr s = \sum_{m=0}^{\chmtwo{M}} \ell_m(t) \int_{s_m}^{s_{m+1}} s \incr s.
	\end{align}
	Next, we write out the expression for the integral,
	\begin{align}
		\int_{s_m}^{s_{m+1}} s \incr s
		&= \frac{1}{2} (s_{m+1}^2 - s_m^2) = \frac{1}{2} (s_{m+1} - s_m)(s_{m+1} + s_m) = \frac{1}{2} \Delta s ((m+1) + m) \Delta s \\
		&= \frac{2m + 1}{2} \Delta s^2, \nonumber
	\end{align}
	and substitute into~\eqref{eq:erlang:mixture:delta:family:mu:proof}:
	\begin{align}
		\mu(t)
		&= \sum_{m=0}^{\chmtwo{M}} \frac{(at)^m}{m!} a e^{-at} \frac{2m+1}{2} \Delta s^2 = t \sum_{m=1}^{\chmtwo{M}} \frac{(at)^{m-1}}{(m-1)!} e^{-at} + \frac{1}{2a} \sum_{m=0}^{\chmtwo{M}} \frac{(at)^m}{m!} e^{-at} \chmtwo{\rightarrow t}
	\end{align}
	\chgtwo{as $a \rightarrow \infty$ when $M/a \rightarrow \infty$ as well.} Here, we have used that $\Delta s = 1/a$\chgtwo{, and Lemma~\ref{thm:exponential:taylor:polynomial}. Specifically, the first term converges to the factor $t$, and the second term converges to zero because of the factor $1/(2a)$}. \chgthree{For $t > 0$, $\mu$ is positive because all terms are positive, and for $t = 0$, $\mu(t) = 1/(2a) > 0$.}
\end{proof}
\begin{lemma}\label{lem:erlang:mixture:delta:family:variance}
	\chg{For given $t \in \Rnn$, t}he variance, $\sigma^2: \Rnn \rightarrow \Rnn$, associated with an Erlang mixture delta family, $\delta_a: \Rnn \times \Rnn \rightarrow \Rnn$, \chgtwo{of order $M \in \Nnn$} with rate parameter $a$ \chgtwo{converges to zero,}
	\begin{align}
		\sigma^2(t) &\rightarrow 0,
	\end{align}
	\chgtwo{as $a \rightarrow \infty$ if $M/a \rightarrow \infty$ as well.}
\end{lemma}

\begin{proof}
	First, we write out the integral in the definition of $\mu_2$ in~\eqref{eq:erlang:mixture:delta:family:variance}:
	\begin{align}\label{eq:erlang:mixture:delta:family:variance:proof}
		\mu_2(t) &= \int_0^\infty \delta_a(t, s) s^2 \incr s = \sum_{m=0}^{\chmtwo{M}} \ell_m(t) \int_{s_m}^{s_{m+1}} s^2 \incr s.
	\end{align}
	Next, we write out the integral of $s^2$,
	\begin{align}
		\int_{s_m}^{s_{m+1}} s^2 \incr s
		&= \frac{1}{3} (s_{m+1}^3 - s_m^3) = \frac{1}{3} (s_{m+1} - s_m) (s_{m+1}^2 + s_{m+1} s_m + s_m^2) \\
		&= \frac{1}{3} \Delta s ((m+1)^2 + (m+1) m + m^2) \Delta s^2 \nonumber \\
		&= \left(m (m-1) + 2m + \frac{1}{3}\right) \Delta s^3, \nonumber
	\end{align}
	and substitute it into the right-hand side of~\eqref{eq:erlang:mixture:delta:family:variance:proof}:
	\begin{align}
		\mu_2(t) &= \sum_{m=0}^{\chmtwo{M}} \frac{(at)^m}{m!} a e^{-at} \left(m (m-1) + 2m + \frac{1}{3}\right) \Delta s^3.
	\end{align}
	We \chgtwo{investigate the convergence of} each of the three sums separately:
	\begin{subequations}\label{eq:erlang:mixture:delta:family:variance:terms}
		\begin{align}
			\sum_{m=0}^{\chmtwo{M}} \frac{(at)^m}{m!} a e^{-at} m (m-1) \Delta s^3
			= t^2 \sum_{m=2}^{\chmtwo{M}} \frac{(at)^{m-2}}{(m-2)!} e^{-at} &\chmtwo{\rightarrow t^2}, \\
			\sum_{m=0}^{\chmtwo{M}} \frac{(at)^m}{m!} a e^{-at} 2m \Delta s^3
			= 2 \frac{t}{a} \sum_{m=1}^{\chmtwo{M}} \frac{(at)^{m-1}}{(m-1)!} e^{-at} &\chmtwo{\rightarrow 0}, \\
			\sum_{m=0}^{\chmtwo{M}} \frac{(at)^m}{m!} a e^{-at} \frac{1}{3} \Delta s^3
			= \frac{1}{3 a^2} \sum_{m=0}^{\chmtwo{M}} \frac{(at)^m}{m!} e^{-at} &\chmtwo{\rightarrow 0,}
		\end{align}
	\end{subequations}
	\chgtwo{as $a \rightarrow \infty$ when $M/a \rightarrow \infty$ as well.} In all three cases, we have used that $\Delta s = 1/a$ \chgtwo{and Lemma~\ref{thm:exponential:taylor:polynomial}}. \chgtwo{The first term converges to the factor $t^2$, and the last two terms convergence to zero because of the factors $2t/a$ and $1/(3a^2)$.}
	\chgtwo{Consequently, $\mu_2(t) \rightarrow t^2$, and we use}
	the expression for \chgthree{the} \chgtwo{limit of} the mean~\eqref{eq:erlang:mixture:delta:family:mean} \chgtwo{and} the definition of $\sigma^2$ in~\eqref{eq:erlang:mixture:delta:family:variance} \chgtwo{to conclude the proof}:
	\begin{align}
		\sigma^2(t) &= \mu_2(t) - \mu^2(t) \rightarrow t^2 - t^2 = 0.
	\end{align}
	\chgtwo{We have used that $\mu^2(t) \rightarrow t^2$ when $\mu(t) \rightarrow t$ because the square is continuous.}
\end{proof}

\begin{lemma}\label{lem:theorem:kallenberg}
	Let $\delta: \Rnn \times \Rnn \rightarrow \Rnn$ be an Erlang mixture delta family with mean $\mu: \Rnn \rightarrow \Rnn$, \chg{variance $\sigma^2: \Rnn \rightarrow \Rnn$,} and rate parameter $a \in \Rp$. \chgtwo{Furthermore, let $k = \left(\int_0^\infty \delta_a(t, s) \incr s\right)^{-1} \in \Rnn$.} Then, the inequality
	\begin{align}\label{eq:chebyshev:inequality}
		\chmtwo{\frac{1}{k}} - \int_{\chmtwo{k} \mu(t) - \gamma}^{\chmtwo{k} \mu(t) + \gamma} \delta_a(t, s) \incr s \leq \frac{\sigma^2(t)}{\gamma^2}
	\end{align}
	holds for all $\gamma \in \Rp$ that satisfy $\gamma \leq \chmthree{k} \mu(t)$.
\end{lemma}
\begin{proof}
	This follows from Chebyshev's (or Markov's) well-known inequality in probability theory and the fact that $\mu(t) > 0$ for all $t \in \Rnn$ and finite rate parameters, $a \in \Rp$. See, e.g., Lemma~4.1 in the book by Kallenberg~\cite{Kallenberg:2002}. \chgtwo{In order to apply Chebyshev's inequality, we derive the mean and variance for the scaled Erlang mixture delta family, $k \delta_a$. The mean is also scaled by $k$,
	\begin{align}
		\int_0^\infty k \delta_a(t, s) s \incr s &= k \int_0^\infty \delta_a(t, s) s \incr s = k \mu(t),
	\end{align}
	whereas the variance is not directly proportional to $\sigma^2$:
	\begin{subequations}
		\begin{align}
			\int_0^\infty k \delta_a(t, s) s^2 \incr s &= k \int_0^\infty \delta_a(t, s) s^2 \incr s = k \mu_2(t), \\
			\int_0^\infty k \delta_a(t, s) (s - k \mu(t))^2 \incr s &= k \mu_2(t) - k^2 \mu^2(t).
		\end{align}
	\end{subequations}
	However, since $k \geq 1$, $-k^2 \leq -k$ and the variance is bounded by $k \mu_2(t) - k \mu^2(t) = k \sigma^2(t)$. Next, Chebyshev's inequality for $k \delta_a$ is
	\begin{align}
		1 - \int_{k \mu(t) - \gamma}^{k \mu(t) + \gamma} k \delta_a(t, s) \incr s &\leq \frac{k \mu_2(t) - k^2 \mu^2(t)}{\gamma} \leq \frac{k \sigma^2(t)}{\gamma},
	\end{align}
	where we have used the bound on the variance of $k \delta_a$. The inequality~\eqref{eq:chebyshev:inequality} is obtained by dividing by $k$.
	}
\end{proof}
	\section{\chgtwo{Proof of \chgthree{pointwise} convergence based on delta families}}%
\chg{As the delta family $\delta_a$ presented in \chgtwo{Appendix}~\chgtwo{\ref{sec:convergence}} depends on both $t$ and $s$ (and not only their difference), the proof \chgtwo{that the Erlang mixture approximation converges \chgthree{pointwise}} requires} an adaptation \chg{(\chgtwo{Lemma}~\ref{thm:convergence:delta:family})} of the theorem presented in Section~9.3 of \chg{Chapter~2 in} the book by Korevaar~\cite{Korevaar:1968} \chg{on} the convergence of approximations based on delta families.
\begin{lemma}\label{thm:convergence:delta:family}
	Let $\delta_a: \Rnn \times \Rnn \rightarrow \Rnn$ be a delta family parametrized by $a \in \Rp$ which satisfies the following conditions \chg{for any given $t \in \Rnn$}.
	\begin{enumerate}
		\item For all $\gamma \in \Rp$,
		\begin{align}
			\int_{[t - \gamma]^+}^{t + \gamma} \delta_a(t, s) \incr s \rightarrow 1
		\end{align}
		as $a \rightarrow \infty$.
		\item \chg{For any $\gamma \in \Rp$, $\delta_a(t, s) \rightarrow 0$ uniformly for $s \in \Rnn$ satisfying $\gamma \leq |t-s| < \infty$ as $a \rightarrow \infty$.}
		\item \chg{For all $s \in \Rnn$ and $a \in \Rp$, the delta family is non-negative, i.e., $\delta_a(t, s) \geq 0$}.
	\end{enumerate}
	Furthermore, let $\alpha: \Rnn \rightarrow \chmtwo{\R}$ be a regular kernel.
	Then, the approximation $\hat \alpha: \Rnn \rightarrow \chmtwo{\R}$ given by
	\begin{align}\label{eq:theorem:approximation}
		\hat \alpha(t) &= \int_0^\infty \delta_a(t, s) \alpha(s) \incr s
	\end{align}
	converges \chgthree{pointwise} to $\alpha$ for all $t \in \Rnn$ as $a \rightarrow \infty$.
\end{lemma}
\begin{proof}
	First, we split the integral into two terms:
	\begin{align}
		\hat \alpha(t)
		&= \int_0^\infty \delta_a(t, s) \alpha(s) \incr s \\
		&= \underbrace{\int_{[t - \gamma]^+}^{t + \gamma} \delta_a(t, s) \alpha(s) \incr s}_{\mathcal I_1(t)} + \underbrace{\int_0^{[t - \gamma]^+} \delta_a(t, s) \alpha(s) \incr s + \int_{t + \gamma}^\infty \delta_a(t, s) \alpha(s) \incr s}_{\mathcal I_2(t)}\hspace{-2pt}. \nonumber
	\end{align}
	Next, we add zero to the first term and split the integral:
	\begin{align}
		\mathcal I_1(t) &= \underbrace{\alpha(t) \int_{[t - \gamma]^+}^{t + \gamma} \delta_a(t, s) \incr s}_{\mathcal I_3(t)} + \underbrace{\int_{[t - \gamma]^+}^{t + \gamma} \delta_a(t, s) (\alpha(s) - \alpha(t)) \incr s}_{\mathcal I_4(t)}\hspace{-2pt}.
	\end{align}
	The \chgtwo{absolute value of the} second term,  $\mathcal I_4(t)$, can be bounded through the continuity of $\alpha$. Specifically, choose $\gamma$ such that $|\alpha(s) - \alpha(t)| < \epsilon$ for $|s - t| < \gamma$. Then,
	\begin{align}
		\chmtwo{|}\mathcal I_4(t)\chmtwo{|} < \chmtwo{\int_{[t - \gamma]^+}^{t + \gamma} \delta_a(t, s) |\alpha(s) - \alpha(t)| \incr s <} \epsilon \int_{[t - \gamma]^+}^{t + \gamma} \delta_a(t, s) \incr s \chmthree{\leq} \epsilon.
	\end{align}
	Next, we prove that the term $\mathcal I_2(t) \rightarrow 0$ as $a \rightarrow \infty$. We bound $\chmtwo{|}\mathcal I_2\chmtwo{|}$ from above by substituting the supremum of $\delta_a$, bounding the integrals, and utilizing the non-negativity of $\delta_a$:
	\begin{align}
		\chmtwo{|}\mathcal I_2(t)\chmtwo{|} &\chgtwo{\leq} \int_0^{[t - \gamma]^+} \delta_a(t, s) \chmtwo{|}\alpha(s)\chmtwo{|} \incr s + \int_{t + \gamma}^\infty \delta_a(t, s) \chmtwo{|}\alpha(s)\chmtwo{|} \incr s \leq \chmtwo{\rho(\infty)} \sup_{\substack{s \in \Rnn \\ \gamma \leq |t - s| < \infty}} \delta_a(t, s).
	\end{align}
	Furthermore, we note that by assumption, the supremum of $\delta_a$ over values of $s$ outside of an interval around $t$ goes to zero as $a \rightarrow \infty$.
	We choose $a$ large enough that $\chmtwo{|}\mathcal I_2(t)\chmtwo{|} < \epsilon$. Additionally, we choose $a$ large enough that
	\begin{align}
		1 - \int_{[t - \gamma]^+}^{t + \gamma} \delta_a(t, s) \incr s < \epsilon.
	\end{align}
	Then,
	\begin{align}
		|\mathcal I_3(t) - \alpha(t)| &= \chmtwo{|}\alpha(t)\chmtwo{|} \left(1 - \int_{[t - \gamma]^+}^{t + \gamma} \delta_a(t, s) \incr s\right) < \chmtwo{|}\alpha(t)\chmtwo{|} \epsilon,
	\end{align}
	and by utilizing the above bounds, we can choose $a$ sufficiently large that
	\begin{align}
		|\hat \alpha(t) - \alpha(t)|
		&\chmthree{\leq} |\mathcal I_1(t) - \alpha(t)| + \chmtwo{|}\mathcal I_2(t)\chmtwo{|} \\
		&\chmthree{\leq} |\mathcal I_3(t) - \alpha(t)| + \chmtwo{|}\mathcal I_4(t)\chmtwo{|} + \chmtwo{|}\mathcal I_2(t)\chmtwo{|} \nonumber \\
		&< (\chmtwo{|}\alpha(t)\chmtwo{|} + 2) \epsilon, \nonumber \\
		&\chmtwo{\leq (K+2) \epsilon} \nonumber
	\end{align}
	for any $\epsilon \in \Rp$. We have made repeated use of triangle inequalities. To summarize, for any $\bar \epsilon \in \Rp$, we can choose $\bar a \in \Rp$ such that $|\hat \alpha(t) - \alpha(t)| < \bar \epsilon$ for all $a \geq \bar a$ \chgtwo{when $\alpha$ is bounded}. \chgthree{Note that although the bound is independent of $t$, the convergence is not uniform because $\gamma$ must be chosen based on $t$, and $a$ must be chosen based on $\gamma$. However, any continuous function is uniformly continuous on a bounded interval~\cite[Thm.~4.19]{Rudin:1976}, such that $\hat \alpha$ converges uniformly to $\alpha$ for bounded intervals of $t$.}
\end{proof}
	\section{Properties of Erlang mixture \chgthree{approximations}}\label{sec:erlang:mixture:kernel:properties}%
\chg{In this \chgtwo{appendix}, we prove \chgtwo{Corollary~\ref{thm:erlang:mixture:approximation:exponential:boundedness} and present lemmas on two other} properties of Erlang mixture \chgthree{approximations} given by~\chgtwo{\eqref{eq:erlang:mixture:approximation}} that \chgthree{is} used either to prove convergence of the integral of the absolute error in Section~\ref{sec:erlang:mixture:kernel:convergence} or to prove a relation between the stability analyses of the original system in Section~\ref{sec:system} and the \chgthree{Erlang ODE approximation} presented in \chgtwo{Theorem~\ref{thm:ode:approximation}}.}
\begin{proof}[Proof \chgtwo{of Corollary~\ref{thm:erlang:mixture:approximation:exponential:boundedness}}]
	\chg{
		First, we bound the following exponential\footnote{\chg{The convexity of decaying exponential functions implies that $e^{-x} \leq 1 - \frac{1 - e^{-\omega}}{\omega} x$ for $x \in [0, \omega]$.}} for $\rho/a \leq \omega$:
		\begin{align}
			e^{-\rho s_m} &= e^{-\rho m/a} = \left(e^{-\rho/a}\right)^m \leq \left(1 - \frac{1 - e^{-\omega}}{\omega} \frac{\rho}{a}\right)^m = \left(1 - \frac{\bar \rho}{a}\right)^m = \left(\frac{a - \bar \rho}{a}\right)^m.
		\end{align}
		Next, we substitute the exponential bound in the expression for the coefficients \chgtwo{in~\eqref{eq:erlang:mixture:approximation}}:
		\begin{align}
			\chmtwo{|}c_m\chmtwo{|} &\chmtwo{\leq} \int_{s_m}^{s_{m+1}} \chmtwo{|}\alpha(s)\chmtwo{|} \incr s
			\leq L \int_{s_m}^{s_{m+1}} e^{-\rho s} \incr s = -\frac{L}{\rho} (e^{-\rho s_{m+1}} - e^{-\rho s_m}) \\
			&= \frac{L}{\rho} e^{-\rho s_m} \left(1 - e^{-\rho (s_{m+1} - s_m)}\right)
			= \frac{L}{\rho} e^{-\rho s_m} (1 - e^{-\rho \Delta s})
			\leq L e^{-\rho s_m} \Delta s
			= L e^{-\rho s_m} \frac{1}{a} \nonumber \\
			&\leq L \left(\frac{a - \bar \rho}{a}\right)^m \frac{1}{a}. \nonumber
		\end{align}
		Here, we have used the bound $1 - e^{-\rho \Delta s} \leq \rho \Delta s$. Next, we substitute this bound into the Erlang mixture kernel:
		\begin{align}
			\chmtwo{|}\hat \alpha(t)\chmtwo{|}
			&\chmtwo{\leq} \sum_{m=0}^{\chmtwo{M}} \chmtwo{|}c_m\chmtwo{|} \ell_m(t)
			\leq \sum_{m=0}^{\chmtwo{M}} L \left(\frac{a - \bar \rho}{a}\right)^m \frac{1}{a} \frac{(at)^m}{m!} a e^{-at}
			= L \sum_{m=0}^{\chmtwo{M}} \frac{((a - \bar \rho) t)^m}{m!} e^{-at} \\
			&= L e^{-\bar \rho t} \sum_{m=0}^{\chmtwo{M}} \frac{((a - \bar \rho) t)^m}{m!} e^{-(a - \bar \rho) t}
			\chmtwo{\leq} L e^{-\bar \rho t} e^{(a - \bar \rho) t} e^{-(a - \bar \rho) t} = L e^{-\bar \rho t}. \nonumber
		\end{align}
		Again, we have used the identity~\eqref{eq:exponential} \chgtwo{and that all terms in the sum are positive for sufficiently large $a$}. The bound $1 - e^{-\omega} \chmthree{<} \omega$ \chgthree{for $\omega \in \Rp$} provides the bound $\bar \rho \chmthree{<} \rho$, and using the identity~\eqref{eq:exponential}, we show that $\bar \rho \rightarrow \rho$ as $\omega \rightarrow 0$:
		\begin{align}
			\bar \rho
			&= \rho \frac{1 - e^{-\omega}}{\omega}
			= \rho \frac{1 - \sum\limits_{m=0}^\infty \frac{(-\omega)^m}{m!}}{\omega}
			= \rho \sum\limits_{m=1}^\infty \frac{(-\omega)^{m-1}}{m!}.
		\end{align}
		The first term in the sum is one, and all other terms converge to zero as $\omega \rightarrow 0$.
	}
\end{proof}
\begin{lemma}\label{lem:uniformly:integrable:erlang:mixture:approximation}
\chg{
	If the Erlang mixture kernel $\hat \alpha: \Rnn \rightarrow \chmtwo{\R}$ is bounded, it is uniformly integrable, i.e., for all $a \in \Rp$ and for every $\epsilon \in \Rp$, there exists a $\Delta t \in \Rp$ such that if $|t_1 - t_0| < \Delta t$, then,
	\begin{align}
		\int_{t_0}^{t_1} \chmtwo{|}\hat \alpha(t)\chmtwo{|} \incr t &< \epsilon.
	\end{align}
}
\end{lemma}
\begin{proof}
\chg{
	The uniform integrability follows directly from the boundedness:
	\begin{align}
		\int_{t_0}^{t_1} \chmtwo{|}\hat \alpha(t)\chmtwo{|} \incr t &\leq \int_{t_0}^{t_1} K \incr t = K |t_1 - t_0| < K \Delta t.
	\end{align}
	Here, $K$ is the upper bound on $\hat \alpha$, and the bound is satisfied for $\Delta t = \epsilon/K$.
}
\end{proof}
\begin{lemma}\label{lem:tight:erlang:mixture:approximation}
\chg{
	Let \chgtwo{$\alpha: \Rnn \rightarrow \R$ be a regular kernel, and let} $\hat \alpha: \Rnn \rightarrow \chmtwo{\R}$ be an Erlang mixture approximation \chgtwo{of order $M \in \Nnn$ given by~\eqref{eq:erlang:mixture:approximation} with} rate parameter $a \in \Rp$. Then, $\hat \alpha$ is \emph{tight}~\chgtwo{\cite[Sec.~18.3, p.~376]{Royden:Fitzpatrick:2010}}, i.e., for every $\epsilon \in \Rp$, there exists a $t_0 \in \Rnn$ such that
	\begin{align}
		\int_{t_0}^\infty \chmtwo{|}\hat \alpha(t)\chmtwo{|} \chmthree{\incr t} < \epsilon
	\end{align}
	for all $a \in \chmthree{\Rp}$. The result also holds for infinite orders, $M$.
}
\end{lemma}
\begin{proof}
	\chgtwo{In order to prove the statement, we present an approach for choosing $t_0$ for a given $\epsilon$. First, we bound the expression and split the resulting sum:
	\begin{align}\label{eq:tight:proof:split:sum}
		\int_{t_0}^\infty |\hat \alpha(t)| \incr t
		&= \int_{t_0}^\infty \left|\sum_{m=0}^M c_m \ell_m(t)\right| \incr t
		\leq \sum_{m=0}^\infty |c_m| \int_{t_0}^\infty \ell_m(t) \incr t \\
		&= \sum_{m=0}^{m^*} |c_m| \int_{t_0}^\infty \ell_m(t) \incr t + \sum_{m=m^*+1}^\infty |c_m| \int_{t_0}^\infty \ell_m(t) \incr t. \nonumber
	\end{align}
	Note that we include infinitely many terms in the first inequality.
	Next, we bound the second sum:
	\begin{align}\label{eq:tight:proof:sum:2}
		\sum_{m=m^*+1}^\infty |c_m| \int_{t_0}^\infty \ell_m(t) \incr t
		&\leq \sum_{m=m^*+1}^\infty |c_m|
		= \sum_{m=m^*+1}^\infty \left|\int_{s_m}^{s_{m+1}} \alpha(s) \incr s\right| \\
		&\leq \sum_{m=m^*+1}^\infty \int_{s_m}^{s_{m+1}} |\alpha(s)| \incr s
		= \int_{s_{m^*+1}}^\infty |\alpha(s)| \incr s \nonumber \\
		&= \rho(\infty) - \rho(s_{m^*+1}) = \rho(\infty) - \rho\left(\frac{m^*+1}{a}\right). \nonumber
	\end{align}
	For the remainder of the proof, we fix the ratio between $m^*+1$ and $a$ to $\gamma$, i.e., $(m^* + 1)/a = \gamma$, and we choose $\gamma$ large enough that $\rho(\infty) - \rho(\gamma) < \epsilon/2$.
	In order to bound the first sum in the rightmost expression in~\eqref{eq:tight:proof:split:sum}, we write out the integral of the Erlang kernel:
	\begin{align}
		\int_{t_0}^\infty \ell_m(t) \incr t
		&= 1 - \int_0^{t_0} \ell_m(t) \incr t
		= 1 - \left(1 - \sum_{n=0}^m \frac{(at_0)^n}{n!} e^{-at_0}\right) 
		 = \sum_{n=0}^m \frac{(at_0)^n}{n!} e^{-at_0}. 
	\end{align}
	As the order, $m$, only determines the number of terms and since all terms are non-negative, the integral is monotonically non-decreasing in $m$. Therefore,
	\begin{align}
		\int_{t_0}^\infty \ell_m(t) \incr t &\leq \int_{t_0}^\infty \ell_{m^*}(t) \incr t
	\end{align}
	for $m \leq m^*$, and we substitute the value of $a$, which depends on $m^*$ and $\gamma$:
	\begin{align}
		\int_{t_0}^\infty \ell_{m^*}(t) \incr t
		&= \sum_{n=0}^{m^*} \frac{\left(\frac{m^*+1}{\gamma} t_0\right)^n}{n!} e^{-(m^*+1) t_0/\gamma}.
	\end{align}
	For sufficiently large ratios $t_0/\gamma$, the exponentially decaying factor will cause the expression to be monotonically decreasing for all values of $m^*$. Note that the convergence proof in Appendix~\ref{sec:exponential:function} would also not hold under the conditions considered here, i.e., the rightmost expression in~\eqref{sec:exponential:function:convergence:inequality} would not approach zero. Consequently, we can bound the expression by substituting $m^*=0$:
	\begin{align}
		\int_{t_0}^\infty \ell_{m^*}(t) \incr t &\leq e^{-t_0/\gamma}.
	\end{align}
	We use these bounds to derive an upper bound on the first sum in the rightmost expression in~\eqref{eq:tight:proof:split:sum}:
	\begin{align}
		\sum_{m=0}^{m^*} |c_m| \int_{t_0}^\infty \ell_m(t) \incr t
		&\leq \int_{t_0}^\infty \ell_{m^*}(t) \incr t \sum_{m=0}^{m^*} |c_m|
		\leq e^{-t_0/\gamma} \sum_{m=0}^{m^*} |c_m| \\
		&\leq e^{-t_0/\gamma} \rho(s_{m^*+1})
		= e^{-t_0/\gamma} \rho\left(\frac{m^*+1}{a}\right) \nonumber \\
		&= e^{-t_0/\gamma} \rho(\gamma). \nonumber
	\end{align}
	Finally, we combine the two derived bounds:
	\begin{align}
		\int_{t_0}^\infty |\hat \alpha(t)| \incr t
		&\leq \rho(\infty) - \rho(\gamma) + e^{-t_0/\gamma} \rho(\gamma).
	\end{align}
	To summarize, we first choose $\gamma$ large enough that $\rho(\infty) - \rho(\gamma) < \epsilon/2$ and then we choose $t_0$ large enough that $e^{-t_0/\gamma} \rho(\gamma) < \epsilon/2$. As $\gamma$ is constant for all combinations of $m^*$ and $a$, the bound holds for all values of the rate parameter, $a$. Furthermore, the result is independent of the order, $M$, of the Erlang mixture kernel. This concludes the proof.}
\end{proof}
	\section{Derivation of the linear chain trick}\label{sec:deriv}%
In this appendix, we show the derivation of the LCT which transforms DDEs with Erlang mixture kernels to ODEs~\cite{MacDonald:1978}.
\begin{lemma}
\chgtwo{
	The approximate system of DDEs obtained by approximating each element of $\alpha$ in~\eqref{eq:system:x}--\eqref{eq:system:delay} with an Erlang mixture approximation can be transformed analytically to the system of ODEs~\eqref{eq:lct:approximate:system:ODE} \chgthree{in the Erlang ODE approximation}.
}
\end{lemma}
\begin{proof}
	Consider the approximate system of DDEs
	\begin{subequations}\label{eq:lct:approximate:system:DDE:derivation}
		\begin{align}
			\label{eq:lct:approximate:system:DDE:derivation:x}
			\dot{\hat x}(t) &= f(\hat x(t), \hat z(t)), \\
			\label{eq:lct:approximate:system:DDE:derivation:z}
			\hat z(t) &= \int_{-\infty}^t \hat \alpha(t - s) \hat r(s) \incr s, \\
			\label{eq:lct:approximate:system:DDE:derivation:r}
			\hat r(t) &= h(\hat x(t)),
		\end{align}
	\end{subequations}
	where each element of the kernel $\hat \alpha: \Rnn \rightarrow \R^{n_z \chgthree{\times n_r}}$ is an Erlang mixture \chgtwo{approximation of $\alpha_{i\chmthree{j}}$ (see \chgthree{Definition}~\ref{def:erlang:mixture:approximation})}:
	\begin{align}\label{eq:lct:approximate:kernel:DDE}
		\hat \alpha_{i\chmthree{j}}(t) &= \sum_{m=0}^{M_{i\chmthree{j}}} c_{mi\chmthree{j}} \ell_{mi\chmthree{j}}(t), & i &= 1, \ldots, n_z\chmthree{, & j &= 1, \ldots, n_r}.
	\end{align}
	The subscript\chgthree{s} $i$ \chgthree{and $j$} on the Erlang kernel, $\ell_{mi\chmthree{j}}: \Rnn \rightarrow \Rnn$, indicate that it depends on the $i\chmthree{, j}$'th rate parameter, $a_{i\chmthree{j}} \in \Rp$. Furthermore, $\hat x: \R \rightarrow \R^{n_x}$\chgthree{,} $\hat z: \R \rightarrow \R^{n_z}$\chgthree{, and $\hat r: \R \rightarrow \R^{n_r}$} are approximations of $x$, $z$, and $r$, respectively.

	First, we introduce the auxiliary memory state $\chmthree{\hat Z}_{mi\chmthree{j}}: \R \rightarrow \R$ given by
	\begin{align}\label{eq:approximation:revisited:z}
		\chmthree{\hat Z}_{mi\chmthree{j}}(t) &= \int_{-\infty}^t \ell_{mi\chmthree{j}}(t - s) \hat r_{\chmthree{j}}(s) \incr s, & m &= 0, \ldots, M_{i\chmthree{j}}, & i &= 1, \ldots, n_z\chmthree{, & j &= 1, \ldots, n_r}.
	\end{align}
	Next, we rewrite the expression for the memory state,
	\begin{align}\label{eq:approximation:z}
		\hat z_i(t) &= \int_{-\infty}^t \chmthree{\sum_{j=1}^{n_r}} \hat \alpha_{i\chmthree{j}}(t - s) \hat r_{\chmthree{j}}(s) \incr s = \chmthree{\sum_{j=1}^{n_r}} \sum_{m=0}^{M_{i\chmthree{j}}} c_{mi\chmthree{j}} \int_{-\infty}^t \ell_{mi\chmthree{j}}(t - s) \hat r_{\chmthree{j}}(s) \incr s \\
		&= \chmthree{\sum_{j=1}^{n_r}} \sum_{m=0}^{M_{i\chmthree{j}}} c_{mi\chmthree{j}} \chmthree{\hat Z}_{mi\chmthree{j}}(t), \nonumber
	\end{align}
	for $i = 1, \ldots, n_z$\chgthree{, and $j = 1, \ldots, n_r$}. The time derivatives of the Erlang kernels are
	\begin{subequations}\label{eq:erlang:pdf:derivative}
		\begin{align}
			\dot{\ell}_{0,i\chmthree{j}}(t) &= -a_{i\chmthree{j}} \ell_{0, i\chmthree{j}}(t), &&& i &= 1, \ldots, n_z\chmthree{, & j &= 1, \ldots, n_r}, \\
			\dot{\ell}_{mi\chmthree{j}}(t)
			&= a_{i\chmthree{j}} (\ell_{m-1, i\chmthree{j}}(t) - \ell_{mi\chmthree{j}}(t)), & m &= 1, \ldots, M_{i\chmthree{j}}, & i &= 1, \ldots, n_z\chmthree{, & j &= 1, \ldots, n_r},
		\end{align}
	\end{subequations}
	and the normalization factor satisfies the recursion
	\begin{align}
		b_{mi\chmthree{j}} &= \frac{a_{i\chmthree{j}}^{m+1}}{m!} = \frac{a_{i\chmthree{j}}}{m} \frac{a_{i\chmthree{j}}^m}{(m-1)!} = \frac{a_{i\chmthree{j}}}{m} b_{m-1, i\chmthree{j}}, & m &= 1, \ldots, M_{i\chmthree{j}}, & i &= 1, \ldots, n_z\chmthree{, & j &= 1, \ldots, n_r}.
	\end{align}
	In order to derive differential equations for the auxiliary memory states, we use Leibniz' integral rule~\cite[Thm.~3, Chap.~8]{Protter:Morrey:1985} to differentiate~\eqref{eq:approximation:revisited:z}:
	\begin{align}
		\chmthree{\dot{\hat Z}}_{mi\chmthree{j}}(t)
		&= \ell_{mi\chmthree{j}}(0) \hat r_{\chmthree{j}}(t) + \int_{-\infty}^t \dot{\ell}_{mi\chmthree{j}}(t - s) \hat r_{\chmthree{j}}(s) \incr s, & m &= 0, \ldots, M_{i\chmthree{j}}.
	\end{align}
	\chgthree{for $i = 1, \ldots, n_z$ and $j = 1, \ldots, n_r$.} Next, we use the time derivatives of the Erlang kernels and the fact that
	\begin{align}
		\ell_{mi\chmthree{j}}(0) &=
		\begin{cases}
			a_{i\chmthree{j}}, 	& \text{for}~m = 0, \\
			0,   				& \text{for}~m = 1, \ldots, M_{i\chmthree{j}},
		\end{cases} & i &= 1, \ldots, n_z\chmthree{, & j &= 1, \ldots, n_r},
	\end{align}
	to obtain the differential equations
	\begin{subequations}
		\begin{align}
			\chmthree{\dot{\hat Z}}_{0, i\chmthree{j}}(t)
			&= a_{i\chmthree{j}} (\hat r_{\chmthree{j}}(t) - \chmthree{\hat Z}_{0, i\chmthree{j}}(t)), \\
			\chmthree{\dot{\hat Z}}_{mi\chmthree{j}}(t)
			&= a_{i\chmthree{j}} (\chmthree{\hat Z}_{m-1, i\chmthree{j}}(t) - \chmthree{\hat Z}_{mi\chmthree{j}}(t)), \quad m = 1, \ldots, M_{i\chmthree{j}}.
		\end{align}
	\end{subequations}
	Finally, by introducing the vector of all \chgthree{auxiliary} memory states,
	\setcounter{MaxMatrixCols}{20} 
	\begin{align}
		\hat Z &=
		\chmthree{
		\begin{bmatrix}
			\chmthree{\hat Z}^{(1, 1)} \\
			\chmthree{\hat Z}^{(1, 2)} \\
			\vdots \\
			\chmthree{\hat Z}^{(1, n_r)} \\
			\chmthree{\hat Z}^{(2, 1)} \\
			\vdots \\
			\chmthree{\hat Z}^{(n_z, n_r)}
		\end{bmatrix}, &
		\chmthree{\hat Z}^{(ij)} &=
		\begin{bmatrix}
			\chmthree{\hat Z}_{0, ij} \\
			\chmthree{\hat Z}_{1, ij} \\
			\vdots \\
			\chmthree{\hat Z}_{M_{ij}, ij}
		\end{bmatrix},
		& i &= 1, \ldots, n_z, & j &= 1, \ldots, n_r,
	}
	\end{align}
	the system of DDEs can be transformed to the system of ODEs~\eqref{eq:lct:approximate:system:ODE}.
\end{proof}
	\section{Numerical simulation}\label{sec:numerical:simulation}%
In this appendix, we present two numerical methods for simulating DDEs with distributed time delays, i.e., for approximating the solution to \chgthree{IVP}s in the form~\eqref{eq:system}--\eqref{eq:system:delay}. The first is intended for non-stiff systems, and it uses Euler's explicit method to discretize the differential equations and a left rectangle rule to discretize the integral in the convolution in~\eqref{eq:system:z}. In contrast, the second is intended for stiff systems, and it uses Euler's implicit method and a right rectangle rule to discretize the differential equations and the integral, respectively. In both cases, we truncate the integral:
\begin{align}\label{eq:numerical:simulation:system:z}
	z(t) &= \int_{-\infty}^t \alpha(t - s) r(s) \incr s \approx \int_{t - \Delta t_h}^t \alpha(t - s) r(s) \incr s.
\end{align}
The memory horizon, $\Delta t_h \in \Rp$, can, e.g., be determined with the bisection approach described in Section~\ref{sec:algo:domain}.

\subsection{Non-stiff systems}\label{sec:numerical:simulation:non:stiff}
For non-stiff systems, we obtain the approximate system
\begin{subequations}\label{eq:numerical:simulation:non:stiff}
	\begin{align}
		\label{eq:numerical:simulation:non:stiff:x}
		x_{n+1} &= x_n + f(x_n, z_n) \Delta t, \\
		\label{eq:numerical:simulation:non:stiff:z}
		z_{n+1} &= \sum_{j=1}^{N_h} \alpha(j \Delta t) r_{n-j+1} \Delta t, \\
		\label{eq:numerical:simulation:non:stiff:r}
		r_{n+1} &= h(x_{n+1}),
	\end{align}
\end{subequations}
where $t_n = t_0 + n \Delta t \in \R$ for some time step size, $\Delta t \in \Rp$. We assume that the memory horizon is a multiple, $N_h \in \N$, of the time step size, $\Delta t$. For $t_n \leq t_0$, $x_n \in \R^{n_x}$\chgthree{,} $z_n \in \R^{n_z}$\chgthree{, and $r_n \in \R^{n_r}$} are equal to $x(t_n)$, $z(t_n)$, and $r(t_n)$, and for $t_n > t_0$, they are approximations. This is an \emph{explicit} approach, i.e., for given values of $x_n$, $z_n$, and $r_n$, $z_{n+1}$ can be computed using~\eqref{eq:numerical:simulation:non:stiff:z} and $x_{n+1}$ can be evaluated with~\eqref{eq:numerical:simulation:non:stiff:x}. Finally, $r_{n+1}$ can be evaluated with~\eqref{eq:numerical:simulation:non:stiff:r}, and the process is repeated for the subsequent time steps.

\subsection{Stiff systems}\label{sec:numerical:simulation:stiff}
For stiff systems, the approximate system is
\begin{subequations}\label{eq:numerical:simulation:stiff}
	\begin{align}
		\label{eq:numerical:simulation:stiff:x}
		x_{n+1} &= x_n + f(x_{n+1}, z_{n+1}) \Delta t, \\
		\label{eq:numerical:simulation:stiff:z}
		z_{n+1} &= \sum_{j=0}^{N_h-1} \alpha(j \Delta t) r_{n-j+1} \Delta t, \\
		\label{eq:numerical:simulation:stiff:r}
		r_{n+1} &= h(x_{n+1}),
	\end{align}
\end{subequations}
which is a set of algebraic equations that, in general, must be solved numerically. In this work, we solve the residual equation
\begin{align}\label{eq:numerical:simulation:stiff:residual}
	R_n(x_{n+1}; x_n) &= x_{n+1} - x_n - f(x_{n+1}, z_{n+1}) \Delta t = 0,
\end{align}
for each time step sequentially. Here, $R_n: \R^{n_x} \times \R^{n_x} \rightarrow \R^{n_x}$ is a residual function, and we consider $z_{n+1}$ and $r_{n+1}$ to be functions of $x_{n+1}$.
Many numerical methods for solving nonlinear equations can utilize the Jacobian of the involved residual function. The Jacobian of the residual function in~\eqref{eq:numerical:simulation:stiff:residual} is given by
\begin{align}
	\pdiff{R_n}{x_{n+1}}(x_{n+1}; x_n) &= I - \Bigg(\pdiff{f}{x}(x_{n+1}, z_{n+1}) + \pdiff{f}{z}(x_{n+1}, z_{n+1}) \pdiff{z_{n+1}}{x_{n+1}}\Bigg) \Delta t,
\end{align}
where $I \in \R^{n_x \times n_x}$ is an identity matrix and the Jacobians of the memory state and the delayed variables are
\begin{subequations}
	\begin{align}
		\pdiff{z_{n+1}}{x_{n+1}} &= \alpha(0) \pdiff{r_{n+1}}{x_{n+1}} \Delta t, \\
		\pdiff{r_{n+1}}{x_{n+1}} &= \pdiff{h}{x}(x_{n+1}).
	\end{align}
\end{subequations}
	\section{Kernels in the molten salt reactor model}\label{sec:nuclear:fission:kernel}%
In this appendix, we describe the kernel in the molten salt nuclear reactor model from Section~\ref{sec:ex:nuclear:fission}. The $i$'th kernel, $\alpha_i: \R \rightarrow \Rnn$, is given by
\begin{align}\label{eq:nuclear:fission:kernels}
	\alpha_i(t) &= \gamma_i e^{-\lambda_i t} \sum_{j=1}^{N_s} F(t; \mu_j, \sigma_j), & i &= 1, \ldots, N_g,
\end{align}
where $\gamma_i \in \Rp$ is a normalization constant, $F: \Rnn \times \R \times \Rp \rightarrow \Rnn$ is the folded normal kernel~\eqref{eq:folded:normal:pdf}, and $N_s = 7 \in \N$ is the number of terms. The sum of folded normal kernels represents a nonuniform velocity profile in the external circulation of the molten salt (and neutron precursors), and the second factor describes the decay of the neutron precursors during the external circulation. For given $\mu_1 \in \R$ and $\sigma_1 \in \Rp$, the location and scale parameters are
\begin{align}
	\sigma_{j+1} &= \frac{3}{2} \sigma_j, &
	\mu_{j+1} &= \mu_j + \sigma_j, & j &= 1, \ldots, N_s - 1.
\end{align}
Finally, the normalization constant is
\begin{align}
    \gamma_i &= \left(\sum_{j=1}^{N_s} \frac{1}{2} e^{\frac{1}{2} \sigma_j^2 \lambda_i^2} \left(e^{\lambda_i \mu_j} \left(1 - \erf\left(\frac{\lambda_i \sigma_j^2 + \mu_j}{\sqrt{2} \sigma}\right)\right) + e^{-\lambda_i \mu_j} \left(1 - \erf\left(\frac{\lambda_i \sigma_j^2 - \mu_j}{\sqrt{2} \sigma_j}\right)\right)\right)\right)^{-1},
\end{align}
for $i = 1, \ldots, N_g$.
\end{document}